\tikzstyle{process} = [rectangle,minimum width=2cm,minimum height=1cm,text centered,text width =4cm,draw=black]
\newcommand{\bmb}{\bm{\beta}}
\newcommand{\bmr}{\bm{\rho}}
\newcommand{\bms}{\bm{s}}
\newcommand{\bmm}{\bm{m}}
\newcommand{\bma}{\bm{\alpha}}
\newcommand{\m}{\bmm}
\newcommand{\s}{s}
\newcommand{\bu}{\mathbf{u}}
\newtheorem{theorem}{Theorem}[section]
\newtheorem{proposition}{Proposition}[section]
 \theoremstyle{definition}
 \newtheorem{definition}[theorem]{Definition}
 \newtheorem{example}[theorem]{Example}
\newtheorem{remark}[theorem]{Remark}
\numberwithin{equation}{section}
\newcommand{\nn}{\bm{n}}
\newcommand{\uu}{\bm{u}}
\newcommand{\uui}{\bm{\mathrm{u}}}
\newcommand{\rti}{\uprho_{T,i}}
\newcommand{\mm}{\bm{m}}
\newcommand{\so}{s}
\newcommand{\rrho}{\rho}
\newcommand{\nnu}{\bm{\nu}}
\newcommand{\pphi}{\phi}
\newcommand{\Pphi}{\Phi}
\newcommand{\bnn}{\underline{\bm{n}}}
\newcommand{\buu}{\underline{\bm{u}}}
\newcommand{\cmm}{\underline{\bm{m}}}
\newcommand{\bso}{\bm{s}}
\newcommand{\brrho}{\bm{\rho}}
\newcommand{\sop}{{s}_{p}}
\newcommand{\bpphi}{\bm{\phi}}
\newcommand{\bppsi}{\bm{\psi}}
\newcommand{\bPphi}{\bm{\Phi}}
\newcommand{\jj}{{j}}
\newcommand{\ii}{{i}}
\newcommand{\FF}{F}
\newcommand{\GG}{G}
\newcommand{\HH}{H}
\newcommand{\bFF}{\bm F}
\newcommand{\bGG}{\bm G}
\newcommand{\bHH}{\bm H}
\newcommand{\ssigma}{\bm{\sigma}}
\newcommand{\bssigma}{\underline{\bm{\sigma}}}
\newcommand{\bttau}{\underline{\bm{\tau}}}
\newcommand{\ttau}{\bm{\tau}}
\newcommand{\stint}[1]{\langle\!\langle #1 \rangle\!\rangle_h}
\newcommand{\sint}[1]{\langle #1 \rangle_h}
\begin{document}
\title[MFC with FEM]{Generalized optimal transport and mean field control problems for reaction-diffusion systems with high-order finite element computation}
\author[Fu]{Guosheng Fu}
\address{Department of Applied and Computational Mathematics and
Statistics, University of Notre Dame, Notre Dame, IN, USA.}
\email{gfu@nd.edu}
 \thanks{G. Fu's work is supported by NSF grant DMS-2012031.}

\author[Osher]{Stanley Osher}
\address{Department of Mathematics, University of California, Los Angeles,
Los Angeles, CA, USA.}
\email{sjo@math.ucla.edu}
\thanks{S. Osher's work is supported in part by AFOSR MURI FP 9550-18-1-502 and ONR grants:  N00014-20-1-2093 and N00014-20-1-2787.}

\author[Pazner]{Will Pazner}
\address{Fariborz Maseeh Department of Mathematics and Statistics, Portland State University, Portland, OR, USA.}
\email{pazner@pdx.edu}

\author[Li]{Wuchen Li}
\address{Department of Mathematics, University of South Carolina, Columbia, SC, USA.}
\email{wuchen@mailbox.sc.edu}
\thanks{W. Li's work is supported by AFOSR MURI FP 9550-18-1-502, AFOSR YIP award No. FA9550-23-1-0087, NSF DMS-2245097, and NSF RTG: 2038080.}




 \keywords{Optimal transport; Multi-population mean-field control problems; Generalized Fisher information functional; Reaction-diffusion systems; Finite element methods; ALG2 algorithms.}
\subjclass{35K57, 49N80, 49M41, 65N30.}
\begin{abstract}
We design and compute a class of optimal control problems for reaction-diffusion systems. They form mean field control problems related to multi-density reaction-diffusion systems. To solve proposed optimal control problems numerically, we first apply high-order finite element methods to discretize the space-time domain and then solve the optimal control problem using augmented Lagrangian methods (ALG2). Numerical examples, including generalized optimal transport and mean field control problems between Gaussian distributions and image densities, demonstrate the effectiveness of the proposed modeling and computational methods for mean field control problems involving reaction-diffusion equations/systems. 
\end{abstract}\maketitle

\section{Introduction}
\label{sec:intro}
 Reaction-diffusion systems are essential classes of modeling dynamics \cite{OnsagerMachlup1953_fluctuations}, which have applications in tumor growth modeling \cite{ChenBenvenisteTannenbaum2023_unbalanced}, propagation of pandemic spread \cite{LeeLiuLiOsher2022_mean,LeeLiuTembineLiOsher2021_controllinga}, evolutionary games \cite{GaoLiLiu}, etc. The reaction term reflects general nonlinear interacting behaviors of agents/particles in complex systems. Patterns of population behaviors often arise in the solution of reaction-diffusion equations, which model the collective behaviors of particles/agents.

In recent years, mean field control problems \cite{huang2006large,LasryLions2007} have been studied, which are optimal control problems for mean-field limits of infinitely many identical particles/agents. The problem models the complex behaviors of identical particles/agents interacting with each other. It is worth mentioning that a special example of a mean-field control problem forms the dynamical optimal transport problem \cite{benamou1999numerical}. It studies a particular optimal control problem in density space, which transfers an initial density toward the terminal density function. The optimal transport problem also introduces a functional distance, namely the Wasserstein distance. It helps study and compute a class of initial value evolutionary dynamics, namely, Wasserstein gradient flows \cite{JKO}. They are also valuable for modeling interaction behaviors of particle dynamics. 

Nowadays, optimal transport and mean field control problems have vast applications in modeling and computations. Classical studies of optimal transport and mean field control problems are often limited to a single-density function. The complex interaction between densities is a vital modeling factor that has not been systemically studied. 

In this paper, we generalize a class of mean field control of reaction-diffusion equations/systems proposed in \cite{LiLeeOsher22}. It forms an optimal control problem among multiple density functions interacting with each other with a nonlinear reaction vector function. After change of variables, we reformulate the optimal control problem into an optimization problem with a Fisher information type potential functional. We apply the high-order finite element method to discretize the spatial-time domain and use the augmented Lagrangian method, ALG2 from \cite{FortinBook}, to compute the discretized optimization problems. Numerical examples, including mean field control problems between Gaussian densities and a system with $12$ images, demonstrate the solution of the proposed generalized mean field control problems. 

Generalized optimal transport and mean field control problems have been widely investigated in  \cite{CardaliaguetCarlierNazaret2013_geodesics,CarrilloLisiniSavareSlepcev2010_nonlineara, DolbeaultNazaretSavare2009_new,MielkeRengerPeletier2016_generalizationa}. 
For example, multi-population mean field games were discussed in \cite{BensoussanHuangLauriere2018_mean}, and generalized optimal transport distances between vector densities were formulated in \cite{ChenGeorgiouTannenbaum2018_vectorvalued}. Meanwhile,  unnormalized and unbalanced optimal transport were proposed in \cite{Chizat18,lee2020generalized,LieroMielkeSavare2018_optimala}, which allows to control densities with different total masses. In applications, one also applies mean field control problems to model the propagation of pandemics \cite{LeeLiuLiOsher2022_mean,LeeLiuTembineLiOsher2021_controllinga}. 
They are all important examples of optimal control problems for reaction-diffusion systems. Modeling and computational mean-field control problems for general convection-reaction-diffusion systems are new research directions \cite{ li2022controlling,li2021controlling}. They have potential applications in classical modeling dynamics based on reaction-diffusion equations and systems. This paper proposes a class of mean field control of reaction-diffusion systems. Meanwhile, the optimal control problem of gradient flows in Wasserstein space is widely studied, namely Schr{\"o}dinger bridge problems \cite{ChenGeorgiouPavon2016_relationd, Conforti2019_seconda,LegerLi2021_hopfcole,LiYinOsher2018_computationsc,MonsaingeonTamaniniVorotnikov2020_dynamical}. In addition, the generalized gradient flows in multiple-density spaces have been studied \cite{FuOsherLi,GallouetMonsaingeon2017_jko,Mielke11,MielkeRengerPeletier2016_generalizationa}. Our formulation extends the optimal control problem of gradient flows based on reaction-diffusion equations and systems. Thus, we study the mean field control using formalisms of controlling gradient flows. In simulations, we apply high-order finite element schemes to simulate the proposed generalized optimal transport and mean field control problems.

The paper is organized as follows. In section \ref{sec2}, we first review gradient flows and their induced metric distances in generalized optimal transport spaces. We then formulate and derive optimality conditions for generalized optimal transport and mean field control problems of reaction-diffusion systems. In section \ref{sec3}, we approximate the proposed mean field control problems using high-order finite element schemes and then use the ALG2 method to compute the discretized optimization problems. Numerical results, including scalar reaction-diffusion equations and two-species, twelve-species reaction-diffusion systems, are presented in section \ref{sec4}. Finally, we conclude with a discussion in section \ref{sec5}. 

\section{Generalized optimal transport and mean field control of reaction-diffusion systems}\label{sec2}
This section presents the main formulation of mean field control (MFC) problems for scalar reaction-diffusion equations and systems. 
 \subsection{Background: Reaction-diffusion induced  metric distances}
 Before delving into the mean field control problems that will be discussed in this manuscript, we first review the definitions of two metric distances: one for scalars and one for systems, which are obtained from generalized optimal transport type gradient flow problems. The material in this subsection follows closely our previous work on variational time implicit schemes for reaction-diffusion systems in \cite{FuOsherLi}.
 We only give the definition of these metric distances without further elaboration, but refer to the references \cite{2005_gradienta,LegerLi2021_hopfcole, LiLeeOsher22,Mielke11} for more details on 
 optimal transport type gradient flows, distances, mean-field control and related problems.
 
 \subsubsection{The metric distance: scalar case}
 The scalar metric distance is derived from the following reaction-diffusion equation \cite{LiLeeOsher22,FuOsherLi}:
\begin{equation}\label{dissipative}
\begin{split}
&\partial_t \rho=\nabla\cdot(V_1(\rho)\nabla\frac{\delta}{\delta \rho}\mathcal{E}(\rho))-V_2(\rho)\frac{\delta}{\delta\rho}\mathcal{E}(\rho), \text{ on }[0,T]\times\Omega,
\end{split}
\end{equation}
with homogeneous Neumann boundary conditions 
$V_1(\rho)\nabla\frac{\delta}{\delta \rho}\mathcal{E}(\rho)\cdot\nnu|_{\partial \Omega} = 0$, where $\nnu$ is the unit outward normal direction on the boundary $\partial\Omega$. Here $\Omega\subset\mathbb{R}^d$ is the spatial domain, $\rho\colon [0,T]\times\Omega \rightarrow \mathbb{R}$ is a scalar non-negative density function satisfying
\begin{equation}
\label{d-space0}
   \rho(t,\cdot) \in  \mathcal{M}=\{\rho\in H^1(\Omega): \rho\geq 0\},\quad \forall t\geq 0,
   \end{equation} 
 $\mathcal{E}\colon \mathcal{M}\rightarrow\mathbb{R}$ is an energy functional, $V_1, V_2\colon \mathbb{R}_+\rightarrow\mathbb{R}_+$ are two positive mobility functions, and $\frac{\delta}{\delta\rho}$ is the  first variation operator in $L^2$ space.

A crucial property of the equation \eqref{dissipative} is that it
satisfies an energy-dissipation law:
\begin{equation}
\label{ener-law}
\frac{d}{dt}\mathcal{E}(\rho)=-\int_\Omega\Big[\|\nabla\tfrac{\delta}{\delta \rho}\mathcal{E}(\rho)\|^2V_1(\rho) + |\tfrac{\delta}{\delta \rho}\mathcal{E}(\rho)|^2V_2(\rho)\Big]dx\leq  0,
\end{equation}
where we use the fact that functions $V_1$, $V_2$ are non-negative. 
In the literature, the above right hand side (dissipation rate)
\begin{align}
\label{fisher}
\mathcal{I}(\rho)=
\int_\Omega\Big[\|\nabla\tfrac{\delta}{\delta \rho}\mathcal{E}(\rho)\|^2V_1(\rho) + |\tfrac{\delta}{\delta \rho}\mathcal{E}(\rho)|^2V_2(\rho)\Big]dx,
\end{align}
is called the generalized {\it Fisher information functional}. 
This Fisher information functional induces a metric in the space $\mathcal{M}$, which further defines a distance between two densities $\rho^0, \rho^1\in \mathcal{M}$.
\begin{definition}[Scalar distance functional]
\label{Dis}
Define a distance functional \[\mathrm{Dist}_{V_1,V_2}\colon \mathcal{M}\times \mathcal{M}\rightarrow\mathbb{R}_+\] as below. Consider the following optimal control problem: 
\begin{subequations}\label{DisZ}    \begin{equation}\label{Dis1}
\mathrm{Dist}_{V_1, V_2}(\rho^0, \rho^1)^2:=\inf_{\rho, \bm v_1, v_2}\quad\int_0^T\int_\Omega \Big[\|\bm v_1\|^2 V_1(\rho)+ |v_2|^2V_2(\rho)\Big]dxdt, 
\end{equation}
where the infimum is taken among $\rho(t,x)\colon [0,T]\times\Omega\rightarrow\mathbb{R}_+$,  $\bm v_1(t,x)\colon  [0,T]\times \Omega \rightarrow\mathbb{R}^d$, 
$v_2(t,x)\colon  [0,T]\times \Omega \rightarrow\mathbb{R}$, 
such that $\rho$ satisfies a reaction-diffusion type equation with drift vector field $\bm v_1$, drift mobility $V_1$, reaction rate $v_2$, reaction mobility $V_2$, connecting initial and terminal densities $\rho^0$, $\rho^1\in\mathcal{M}$: 
\begin{equation}\label{Dis2}
\left\{\begin{aligned}
&\partial_t \rho + \nabla\cdot( V_1(\rho) \bm v_1)=V_2(\rho)v_2,\quad \text{on } [0,T]\times \Omega,\\
&\rho(0, x)=\rho^0(x),\quad \rho(T,x)=\rho^1(x),
\end{aligned}\right.
\end{equation}
with no-flux boundary condition $V_1(\rho)\bm v_1\cdot\nnu|_{\partial\Omega}=0$
\end{subequations}
\end{definition}

Introducing the flux function $\bmm(t,x)\colon [0,T]\times\Omega\rightarrow\mathbb{R}^d$ and source function $s(t, x)\colon [0,T]\times\Omega\rightarrow\mathbb{R}$, such that 
\begin{equation*}    \bmm(t,x)=V_1(\rho(t,x))\bm v_1(t,x), \quad s(t,x)=V_2(\rho(t,x))v_2(t,x), 
\end{equation*}
the distance in Definition \ref{Dis} is rewritten as
the following  optimization problem with a linear constraint:
\begin{subequations}
\label{DisM}
\begin{equation}
\label{DisM1}
    \mathrm{Dist}_{V_1, V_2}(\rho^0, \rho^1)^2
    :=\inf_{\rho, \bmm, s}\quad\int_0^T\int_\Omega \Big[\frac{\|\bmm\|^2} {V_1(\rho)}+ \frac{|s|^2}{V_2(\rho)}\Big]dxdt, 
\end{equation}
such that
\begin{equation}
\label{DisM2}
\begin{split}
&\partial_t \rho(t,x)+ \nabla\cdot \bmm(t,x) =s(t,x), \quad \text{in }[0,T]\times\Omega,\\
&
\bmm\cdot\nnu = 0, \quad\text{ on } [0,T]\times\partial\Omega,\\
&
\rho(0,x)=\rho^0(x),\; \rho(T,x)=\rho^1(x), \quad
\text{in }\Omega.
\end{split}
\end{equation}
\end{subequations}
The optimization functional in \eqref{DisM1}
is convex under the condition that both 
$V_1$ and $V_2$ are positive concave functions.
 \subsubsection{The metric distance: system case}
 The system  metric distance is derived from the following reaction-diffusion system
 with 
$M$ species and $R$ reactions \cite{FuOsherLi,Mielke11}:
\begin{equation}\label{dissipativeS}
\begin{split}
&
\partial_t\rho_i =\;\nabla\cdot\left(
V_{1,i}(\rho_i)\nabla\frac{\delta}{\delta \rho}\mathcal{E}_i(\rho_i)\right) \; -\sum_{p=1}^RV_{2,p}(\bmr)\gamma_{i,p}\sum_{j=1}^M\gamma_{j,p}
\frac{\delta}{\delta \rho}\mathcal{E}_j(\rho_j),
\end{split}
\end{equation}
with homogeneous Neumann boundary conditions 
$V_{1,i}(\rho_i)\nabla\frac{\delta}{\delta \rho}\mathcal{E}_i(\rho_i)\cdot\nnu|_{\partial\Omega}=0$
for $1\le i\le M$.
Here $\rho_i$ is the density such that 
$\rho_i(t,\cdot)\in\mathcal{M}$, 
$\mathcal{E}_i\colon \mathcal{M}\rightarrow\mathbb{R}$ is the energy functional 
and $V_{1,i}\colon \mathbb{R}_+\rightarrow\mathbb{R}_+$ is the positive mobility function
for $i$-th species, and $V_{2,p}\colon \mathbb{R}^M_+\rightarrow\mathbb{R}_+$ is the positive mobility function for $p$-th reaction.
The bolded density $\bm\rho=(\rho_1,\cdots, \rho_M)$ is simply the collection of $M$ densities.
Moreover, the coefficient matrix  $\Gamma=(\gamma_{i,p})\in\mathbb{R}^{M\times R}$ satisfies 
$\sum_{i=1}^M\gamma_{i,p}=0$ for all $1\le p\le R$, which is related to the total mass conservation:
$\frac{d}{dt}\int_{\Omega}\sum_{i=1}^M\rho_i\,dx =0.
$

The equation \eqref{dissipativeS}
satisfies the energy-dissipation law:
\begin{equation}
\label{ener-lawS}
\frac{d}{dt}\sum_{i=1}^M\mathcal{E}_i(\rho_i)=-
\bm{\mathcal{I}}(\bm\rho)
\le 0,
\end{equation}
where $\bm{\mathcal{I}}(\bm\rho)$ denotes the generalized Fisher information functional
\begin{equation}
\label{fisherS}
\bm{\mathcal{I}}(\bm\rho):=
\int_\Omega\Big[\sum_{i=1}^M\|\nabla\frac{\delta\mathcal{E}_i}{\delta \rho}\|^2V_{1,i}(\rho_i) + 
\sum_{p=1}^R\left|\sum_{j=1}^M\gamma_{j,p}\frac{\delta\mathcal{E}_j}{\delta \rho}\right|^2V_{2,p}(\bm\rho)\Big]dx.
\end{equation}
In the above, we use the fact that functions $V_{1,i}$, $V_{2,p}$, $1\leq i\leq M$, $1\leq p\leq R$, are non-negative. The functional $\bm{\mathcal{I}}$, again, induces a metric function in space $\mathcal{M}^M$, which defines distances between two system densities $\bm\rho^0, \bm\rho^1\in \mathcal{M}^M$.\\ 
\begin{definition}[System distance functional]
Define a distance functional 
\begin{align*}
\bm{\mathrm{Dist}}_{\bm V_1,\bm V_2}\colon \mathcal{M}^M\times \mathcal{M}^M\rightarrow\mathbb{R}_+
\end{align*}
as below. 
\begin{equation}
\label{DisS}
\begin{aligned}
\mathrm{\mathbf{Dist}}_{V_1, V_2}(\bmr^0, \bmr^1)^2:=& \inf_{\bmr, \cmm, \bms} \Big\{\int_0^T\int_\Omega 
\left(\sum_{i=1}^M\frac{|\bmm_i|^2}{V_{1,i}(\rho_i)}+
\sum_{p=1}^R\frac{|s_p|^2}{V_{2,p}(\bmr)}\right)
 dx{dt}:\;\;\\
&\hspace{1cm}\begin{tabular}{l}
$\partial_t\rho_i+\nabla\cdot \bmm_i=
\sum_{p=1}^R\gamma_{i,p}s_p,\forall 1\le i\le M$,\\
$\bmm_i\cdot\nnu|_{\partial\Omega}=0$,
$\bmr(0,\cdot)=\bmr^0, \;\;
\bmr(T,\cdot)=\bmr^1.$
\end{tabular}\Big\},
\end{aligned}
\end{equation}
where $\cmm=(\bmm_1,\cdots, \bmm_M)$ is the collection of fluxes, and 
$\bms=(s_1,\cdots, s_R)$ is the collection of sources.
\end{definition}
Again, the optimization functional in \eqref{DisS}
is convex under the condition that 
$V_{1,i}$ and $V_{2,p}$ are positive concave functions for all $1\le i\le M$ and $1\le p\le R$.

 \subsection{MFC for scalar reaction-diffusion}
In this subsection, we introduce the following MFC problem. This can be viewed as a generalization of the scalar distance \eqref{DisM}.
\begin{definition}[Scalar MFC problem]
\label{smfc}
Suppose that there exist two positive mobility functions $V_1,V_2:\mathbb{R}_+\rightarrow\mathbb{R}_+$, 
a terminal time $T>0$, a non-negative regularization parameter $\beta\ge0$, 
an energy functional $\mathcal{E}\colon \mathcal{M}\rightarrow\mathbb{R}$, 
a potential functional $\mathcal{F}\colon \mathcal{M}\rightarrow\mathbb{R}$, 
and a terminal functional 
$\mathcal{G}\colon \mathcal{M}\rightarrow\mathbb{R}$. 
\begin{subequations}\label{smfcA}
Consider 
\begin{equation}\label{smfcA1}
\begin{split}
&\inf_{\rho, \tilde{\bmm}, \tilde s}\quad\int_0^T\left[\int_\Omega \left(\frac{\|\tilde{\bmm}\|^2} {2V_1(\rho)}+ \frac{|\tilde s|^2}{2V_2(\rho)}\right)dx
-\mathcal{F}(\rho)\right]dt+\mathcal{G}(\rho(T,\cdot)), 
\end{split}
\end{equation}
where the infimum is among all densities
$\rho$ with $\rho(t,\cdot)\in\mathcal{M}$ for $t\in[0, T]$, flux $\tilde{\bmm}\colon [0, T]\times \Omega \rightarrow\mathbb{R}^d$, and source $\tilde s\colon [0, T]\times \Omega \rightarrow\mathbb{R}$, such that
\begin{equation}\label{smfcA2}
\partial_t \rho + \nabla\cdot \tilde{\bmm}-\tilde s=\beta
\left(\nabla\cdot(V_1(\rho)\nabla\frac{\delta}{\delta \rho}\mathcal{E}(\rho))-V_2(\rho)\frac{\delta}{\delta\rho}\mathcal{E}(\rho)\right),  
\end{equation}
with  boundary condition 
\begin{align}
\label{smfcA3}
\left.\left(\tilde{\bmm}-\beta V_1(\rho)\nabla\frac{\delta}{\delta \rho}\mathcal{E}(\rho)\right)\cdot\nnu\right|_{\partial\Omega} =0,
\end{align}
and fixed initial density $\rho(0,\cdot) = \rho^0$ in $\Omega$. 
\end{subequations}
\end{definition}
From the modeling perspective,  the variational problem in Definition \ref{smfc} models the movement of the density from the initial density under the dynamical constraint of reaction-diffusion equations. It aims to find the optimal choices of vector fields and reaction rate functions under general kinetic, potential, and terminal energies. 
 

We will construct numerical scheme for problem \eqref{smfc} based on the following change of variables.
\begin{proposition}[Scalar MFC reformulation]\label{smfc2}
Denote $\bmm\colon [0,T]\times\Omega\rightarrow\mathbb{R}^d$, 
 and $s\colon [0,T]\times\Omega\rightarrow\mathbb{R}$, such that 
\begin{equation*}
 \bmm=\tilde\bmm-\beta V_1(\rho)\nabla\frac{\delta}{\delta \rho}\mathcal{E}(\rho),\quad s=\tilde s-\beta V_2(\rho)\frac{\delta}{\delta \rho}\mathcal{E}(\rho), 
\end{equation*}
and denote
\begin{equation}
\label{v3-form}
   V_3(\rho)=\frac{1}{V_1(\rho)|\frac{\delta^2}{\delta \rho^2}\mathcal{E}(\rho)|^2}, 
\end{equation}
where $\frac{\delta^2}{\delta \rho^2}\mathcal{E}(\rho)$ is the second variational derivative of the energy functional $\mathcal{E}(\rho)$.
Then, the scalar MFC problem \eqref{smfc} is equivalent to the following optimization problem:
Consider 
\begin{subequations}\label{smfc2A}
\begin{equation}\label{smfc2A1}
\begin{split}
&\inf_{\rho,\bmm, s} ~~ \int_0^T\int_\Omega \Big[\frac{\|\bmm\|^2}{2 V_1(\rho)}+ \frac{|s|^2}{2V_2(\rho)}+\beta^2\frac{|\nabla \rho|^2}{2V_3(\rho)}+\frac{\beta^2}{2}{|\frac{\delta\mathcal{E}}{\delta \rho}|^2}{V_2(\rho)}\Big]dxdt\\
&\hspace{2.0cm}-\int_{0}^T\mathcal{F}(\rho)\,dt+\mathcal{G}(\rho(T,\cdot))+\beta[\mathcal{E}(\rho(T,\cdot))-
\mathcal{E}(\rho^0)], 
\end{split}
\end{equation}
where the infimum is among all densities $\rho$ with $\rho(t,\cdot)\in\mathcal{M}$, for $t\in [0, T]$, flux $\bmm\colon [0, T]\times \Omega \rightarrow\mathbb{R}^d$, and source $s\colon [0, T]\times \Omega \rightarrow\mathbb{R}$, such that
\begin{equation}\label{smfc2A2}
\partial_t \rho(t, x) + \nabla\cdot \bmm(t,x)-s(t,x)=0, 
\end{equation}
with no-flux boundary condition
$
 \bmm\cdot\nnu|_{\partial\Omega}=0, 
 $
\end{subequations}
and fixed initial density $\rho(0,\cdot)=\rho^0$. 
\end{proposition}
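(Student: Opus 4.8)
The plan is to treat the displayed change of variables as an affine, fiberwise bijection: for each fixed admissible $\rho$, the map $(\tilde{\bmm},\tilde s)\mapsto(\bmm,s)$ with $\tilde{\bmm}=\bmm+\beta V_1(\rho)\nabla\frac{\delta}{\delta\rho}\mathcal{E}(\rho)$ and $\tilde s=s+\beta V_2(\rho)\frac{\delta}{\delta\rho}\mathcal{E}(\rho)$ is invertible. Hence it suffices to show two things: that the constraint system of \eqref{smfcA} transforms into the constraint system of \eqref{smfc2A}, and that the cost functional in \eqref{smfcA1} equals the cost functional in \eqref{smfc2A1} along this substitution. The two infima are then taken over identified feasible sets with equal objective values, giving the claimed equivalence.

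First I would dispose of the constraint. Substituting $\tilde{\bmm},\tilde s$ into \eqref{smfcA2}, the terms $\beta\nabla\cdot(V_1(\rho)\nabla\frac{\delta}{\delta\rho}\mathcal{E})-\beta V_2(\rho)\frac{\delta}{\delta\rho}\mathcal{E}$ on the left exactly cancel the right-hand side, leaving the clean continuity equation \eqref{smfc2A2}; likewise the boundary condition \eqref{smfcA3} collapses to $\bmm\cdot\nnu|_{\partial\Omega}=0$. This is pure algebraic cancellation. Next I would expand the quadratic kinetic terms: writing $\frac{\|\tilde{\bmm}\|^2}{2V_1(\rho)}$ and $\frac{|\tilde s|^2}{2V_2(\rho)}$ in terms of $\bmm,s$ yields three groups, namely the original quadratics $\frac{\|\bmm\|^2}{2V_1(\rho)}+\frac{|s|^2}{2V_2(\rho)}$, the pure $\beta^2$ terms $\frac{\beta^2}{2}V_1(\rho)\|\nabla\frac{\delta}{\delta\rho}\mathcal{E}\|^2+\frac{\beta^2}{2}V_2(\rho)|\frac{\delta}{\delta\rho}\mathcal{E}|^2$, and the cross term $\beta\big(\bmm\cdot\nabla\frac{\delta}{\delta\rho}\mathcal{E}+s\,\frac{\delta}{\delta\rho}\mathcal{E}\big)$. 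The $V_2$ piece of the second group already matches the corresponding summand in \eqref{smfc2A1}; for the $V_1$ piece I would invoke the chain rule $\nabla\frac{\delta}{\delta\rho}\mathcal{E}(\rho)=\frac{\delta^2}{\delta\rho^2}\mathcal{E}(\rho)\,\nabla\rho$, so that $V_1(\rho)\|\nabla\frac{\delta}{\delta\rho}\mathcal{E}\|^2=V_1(\rho)|\frac{\delta^2}{\delta\rho^2}\mathcal{E}|^2|\nabla\rho|^2=|\nabla\rho|^2/V_3(\rho)$ by the definition \eqref{v3-form}, reproducing the gradient penalty.

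The crux is showing the cross term integrates to the boundary energy $\beta[\mathcal{E}(\rho(T,\cdot))-\mathcal{E}(\rho^0)]$. Here I would integrate $\bmm\cdot\nabla\frac{\delta}{\delta\rho}\mathcal{E}$ by parts in space, using $\bmm\cdot\nnu|_{\partial\Omega}=0$ to kill the boundary integral, obtaining $\int_\Omega(s-\nabla\cdot\bmm)\frac{\delta}{\delta\rho}\mathcal{E}\,dx$; the continuity equation \eqref{smfc2A2} rewrites $s-\nabla\cdot\bmm=\partial_t\rho$, and the definition of the first variation then identifies the integrand with $\frac{d}{dt}\mathcal{E}(\rho(t,\cdot))$. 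Integrating in time from $0$ to $T$ telescopes to $\beta[\mathcal{E}(\rho(T,\cdot))-\mathcal{E}(\rho^0)]$, which is precisely the extra term appearing in \eqref{smfc2A1}. Collecting all groups, the transformed cost equals \eqref{smfc2A1} term by term, and the equivalence follows.

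The main obstacle I would flag is the chain-rule identity $\nabla\frac{\delta}{\delta\rho}\mathcal{E}=\frac{\delta^2}{\delta\rho^2}\mathcal{E}\,\nabla\rho$: it is exactly here that $\mathcal{E}$ must be a local (pointwise density-dependent) functional, e.g. $\mathcal{E}(\rho)=\int_\Omega U(\rho)\,dx$, so that the first variation is a function of $\rho$ and the second variation acts as scalar multiplication; for a genuinely nonlocal energy $\frac{\delta^2}{\delta\rho^2}\mathcal{E}$ would be an integral operator and the scalar mobility $V_3$ in \eqref{v3-form} would not even be well defined. The remaining subtleties are the regularity needed to justify the spatial integration by parts and the time-integration of $\frac{d}{dt}\mathcal{E}(\rho)=\int_\Omega\frac{\delta}{\delta\rho}\mathcal{E}\,\partial_t\rho\,dx$, which I would assume throughout as in the formal derivation of the energy-dissipation law \eqref{ener-law}.
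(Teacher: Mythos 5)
Your proposal is correct and follows essentially the same route as the paper's proof in the Appendix: cancel the reaction--diffusion terms in the constraint, expand the quadratics, convert the cross term via spatial integration by parts (using the no-flux condition) and the continuity equation into $\beta\int_0^T\int_\Omega \tfrac{\delta\mathcal{E}}{\delta\rho}\,\partial_t\rho\,dx\,dt=\beta[\mathcal{E}(\rho(T,\cdot))-\mathcal{E}(\rho^0)]$, and identify the $V_1$-weighted gradient term with $|\nabla\rho|^2/V_3(\rho)$ via the chain rule. Your explicit caveat that $\mathcal{E}$ must be a local functional is precisely the assumption the paper makes implicitly by writing $\mathcal{E}(\rho)=\int_\Omega E(\rho(x))\,dx$ at the outset of its proof, so no genuine gap remains.
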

The proof of the above proposition follows from a similar argument in \cite[Proposition 2]{LiWang22}.  We present the detailed derivation in the Appendix. The main idea is to use integration by parts to show that 
\begin{equation*}
\begin{split}
&\int_0^T\int_\Omega \Big[\frac{\|\tilde{\bmm}\|^2} {2V_1(\rho)}+\frac{\|\tilde{s}\|^2} {2V_2(\rho)}\Big]dxdt\\
=&\;
\int_0^T\int_\Omega \Big[\frac{\|{\bmm}\|^2} {2V_1(\rho)}
+\frac{\|s\|^2} {2V_2(\rho)}
+\beta^2\frac{|\nabla \rho|^2}{2V_3(\rho)}
+\frac{\beta^2}{2}{|\frac{\delta\mathcal{E}}{\delta \rho}|^2}{V_2(\rho)}
\Big]dxdt\\
&\;+\beta[\mathcal{E}(\rho(T,\cdot))-
\mathcal{E}(\rho^0)].
\end{split}
\end{equation*}

For simplicity of discussions, we assume that there are two functions $F:[0,T]\times\Omega\times \mathbb{R}_+\rightarrow \mathbb{R}$
and $G:\Omega\times\mathbb{R}_+\rightarrow \mathbb{R}$, such that 
\begin{align*}
\int_{\Omega}F(t, x, \rho)dx = &\;
\mathcal{F}(\rho(t,\cdot))-\int_{\Omega}\frac{\beta^2}{2}{|\frac{\delta\mathcal{E}}{\delta \rho}(\rho(t,x))|^2}{V_2(\rho(t,x))}dx,\quad\forall \,t\geq 0,\\
\int_{\Omega}G(x,\rho)dx = &\;
\mathcal{G}(\rho)+\beta\mathcal{E}(\rho).
\end{align*}
Then the functional in \eqref{smfc2A1}
simplifies to the following formula:
\begin{align*}
\int_0^T\int_\Omega \Big[\frac{\|\bmm\|^2}{2 V_1(\rho)}+ \frac{|s|^2}{2V_2(\rho)}+\frac{\beta^2|\nabla \rho|^2}{2V_3(\rho)}-F(t,x,\rho)\Big]dxdt+
\int_{\Omega}
G(x, \rho(T,x))dx.
\end{align*}
This is the form we use in our numerical scheme.

For completeness, we formulate the Karush–Kuhn–Tucker (KKT) condition, i.e., the critical point system, for the optimization problem \eqref{smfc2A}. The proof is presented in the Appendix; see also \cite{ LiLeeOsher22,LiLuWang20}. 
\begin{proposition}[KKT system for \eqref{smfc2A}]\label{smfcKKT}
Let $(\rho, \bmm, s)$ be the critical point of the optimization problem \eqref{smfc2A}. Then there exists a function $\phi\colon [0, T]\times\Omega\rightarrow \mathbb{R}$, such that 
\begin{equation*}
\frac{\bmm(t,x)}{V_1(\rho(t,x))}=\nabla \phi(t,x),\quad \frac{s(t,x)}{V_2(\rho(t,x))}=\phi(t,x),
\end{equation*}
and 
\begin{equation}\label{SBPI}
\left\{\begin{aligned}
&\partial_t\rho(t,x) +\nabla\cdot(V_1(\rho(t,x))\nabla \phi(t,x))-V_2(\rho(t,x))\phi(t,x)=0,\\
&\partial_t\phi(t,x)+\frac{1}{2}\|\nabla \phi(t,x)\|^2 V_1'(\rho(t,x))+\frac{1}{2}|\phi(t,x)|^2V'_2(\rho(t,x))\\
&\hspace{6.3cm}+\frac{\delta}{\delta \rho}\Big[\mathcal{F}(\rho)-\frac{\beta^2}{2}\mathcal{I}(\rho)\Big]=0,
\end{aligned}\right.
\end{equation}
where 
$\mathcal{I}(\rho)$
is the generalized Fisher information functional given in \eqref{fisher}, with initial and terminal time boundary conditions
\begin{equation}
   \rho(0,x)=\rho^0(x),\quad \phi(T,x)=-\frac{\delta}{\delta \rho}\Big(\mathcal{G}(\rho(T,\cdot))+\beta \mathcal{E}(\rho(T,\cdot))\Big).  
\end{equation}
\end{proposition}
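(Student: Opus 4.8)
The plan is to treat \eqref{smfc2A} as an equality-constrained minimization and to introduce a Lagrange multiplier $\phi\colon[0,T]\times\Omega\to\mathbb{R}$ for the linear constraint \eqref{smfc2A2}, forming the saddle-point functional
\begin{align*}
\mathcal{L}(\rho,\bmm,s,\phi)=&\int_0^T\!\!\int_\Omega\Big[\frac{\|\bmm\|^2}{2V_1(\rho)}+\frac{|s|^2}{2V_2(\rho)}+\frac{\beta^2|\nabla\rho|^2}{2V_3(\rho)}-F(t,x,\rho)\Big]\,dx\,dt\\
&+\int_\Omega G(x,\rho(T,x))\,dx+\int_0^T\!\!\int_\Omega\phi\,(\partial_t\rho+\nabla\cdot\bmm-s)\,dx\,dt.
\end{align*}
Setting $\delta\mathcal{L}=0$ with respect to the four arguments will furnish the claimed system. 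First I would integrate by parts: in space, $\int\phi\,\nabla\cdot\bmm=-\int\nabla\phi\cdot\bmm$ with the boundary term killed by the no-flux condition $\bmm\cdot\nnu|_{\partial\Omega}=0$; in time, $\int_0^T\phi\,\partial_t\rho\,dt=[\phi\rho]_0^T-\int_0^T\partial_t\phi\,\rho\,dt$, where the $t=0$ endpoint drops because $\rho(0,\cdot)=\rho^0$ is fixed, so $\delta\rho(0,\cdot)=0$.

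Next I would read off the first variations. Varying $\bmm$ and $s$ gives immediately $\frac{\bmm}{V_1(\rho)}=\nabla\phi$ and $\frac{s}{V_2(\rho)}=\phi$, the stated multiplier relations; substituting these into the constraint (the variation in $\phi$) yields the first line of \eqref{SBPI}. Varying $\rho$ in the interior produces $-\partial_t\phi$ from the time term, the two contributions $-\tfrac12\|\nabla\phi\|^2V_1'(\rho)$ and $-\tfrac12|\phi|^2V_2'(\rho)$ from differentiating the kinetic denominators (after inserting the multiplier relations), and the first variation of the remaining $\rho$-dependent integrand $\tfrac{\beta^2|\nabla\rho|^2}{2V_3(\rho)}-F(t,x,\rho)$, where the gradient term is integrated by parts with a spatial boundary contribution consistent with the natural condition $\nabla\rho\cdot\nnu|_{\partial\Omega}=0$.

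The crux is to recognize that the Fisher-type contributions reassemble into $\tfrac{\beta^2}{2}\mathcal{I}(\rho)$. Using \eqref{v3-form}, $V_3(\rho)=1/(V_1(\rho)|\tfrac{\delta^2}{\delta\rho^2}\mathcal{E}(\rho)|^2)$, together with the chain rule $\nabla\tfrac{\delta}{\delta\rho}\mathcal{E}(\rho)=\tfrac{\delta^2}{\delta\rho^2}\mathcal{E}(\rho)\,\nabla\rho$ valid for a local energy, one checks the pointwise identity $\tfrac{\beta^2|\nabla\rho|^2}{2V_3(\rho)}=\tfrac{\beta^2}{2}\|\nabla\tfrac{\delta}{\delta\rho}\mathcal{E}(\rho)\|^2V_1(\rho)$. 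Adding the $V_2$-piece that was absorbed into $F$, namely $\tfrac{\beta^2}{2}|\tfrac{\delta}{\delta\rho}\mathcal{E}(\rho)|^2V_2(\rho)$, exactly reconstitutes the integrand of $\mathcal{I}$ in \eqref{fisher}; hence the combined $\rho$-potential equals $\tfrac{\beta^2}{2}\mathcal{I}(\rho)-\mathcal{F}(\rho)$, with first variation $\frac{\delta}{\delta\rho}[\tfrac{\beta^2}{2}\mathcal{I}(\rho)-\mathcal{F}(\rho)]$. Collecting the interior $\rho$-variation and flipping the overall sign then gives exactly the Hamilton--Jacobi line of \eqref{SBPI}. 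Finally, the terminal condition comes from the leftover endpoint term: the coefficient of $\delta\rho(T,\cdot)$ is $\phi(T,\cdot)+\frac{\delta}{\delta\rho}G(x,\rho(T,\cdot))=\phi(T,\cdot)+\frac{\delta}{\delta\rho}\big(\mathcal{G}+\beta\mathcal{E}\big)(\rho(T,\cdot))$, whose vanishing is precisely $\phi(T,x)=-\frac{\delta}{\delta\rho}\big(\mathcal{G}(\rho(T,\cdot))+\beta\mathcal{E}(\rho(T,\cdot))\big)$.

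I expect the main obstacle to be the bookkeeping of the $\rho$-variation: correctly differentiating the three mobility denominators, integrating by parts the $V_3$-gradient term while tracking its spatial boundary contribution, and—most importantly—carrying out the identification of the explicit $V_3$-term and the hidden $V_2$-term in $F$ with $\tfrac{\beta^2}{2}\mathcal{I}$ so that its first variation appears cleanly. Once this algebraic identity is in hand, the remaining manipulations are routine calculus of variations.
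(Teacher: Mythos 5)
Your proposal is correct and follows essentially the same route as the paper: both introduce a Lagrange multiplier $\phi$ for the linear constraint, take first variations in $\bmm$, $s$, $\rho$, $\phi$, and the terminal density, and read off the multiplier relations, the Hamilton--Jacobi equation, and the terminal condition. The only cosmetic difference is that the paper writes the Lagrangian with $\tfrac{\beta^2}{2}\mathcal{I}(\rho)$ already assembled, whereas you explicitly reconstitute $\mathcal{I}$ from the $V_3$-gradient term and the $V_2$-piece hidden in $F$ via \eqref{v3-form}; this is a correct and in fact slightly more self-contained presentation of the same argument.
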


\subsubsection{Examples}
Different choices of the functions 
$V_1$, $V_2$, $V_3$, $F$, $G$, 
along with the regularization parameter $\beta\ge0$ and initial density $\rho^0$, 
lead to various scalar MFC problems \eqref{smfc2A}.
We present some examples of these functions in Table \ref{table:1} below. 
\begin{table}[ht!]
\centering
\begin{tabular}{l l}
\hline
$V_1(\rho)$: & $\alpha_1\rho^{\gamma_1}$ with $ 0\le \gamma_1\le 1$. $\alpha_1>0$. \\
$V_2(\rho)$: & $\alpha_2\rho^{\gamma_2}$ with $0\le \gamma_2\le 1$, or 
$\alpha_2\frac{\rho-1}{\log(\rho)}$. $\alpha_2\ge 0$\\
$V_3(\rho)$: & $\alpha_3\rho^{\gamma_3}$ with $ 0\le \gamma_3\le 1$. $\alpha_3>0$. \\
$F(t,x,\rho)$: & $-\tau\rho(\log(\rho)+V(t,x))$, or 
$-\tau(\rho^m+\rho V(t,x))$.
$\tau\ge 0$, $m> 1$. \\
$G(x,\rho)$: & $\gamma(\rho-\rho^1)^2$, or 
$\gamma\rho\log(\rho/\rho^1)$.
$\gamma> 0$. \\
\hline
\end{tabular}
\caption{Example of various functions for scalar MFC \eqref{smfc2A}.}
\label{table:1}
\end{table}
Here 
$V(t,x)$ for $F(t,x,\rho)$ is a given drift coefficient, and 
$\rho^1$ for $G(x, \rho)$ is a given terminal density. 
Taking the energy 
\begin{align*}
\mathcal{E}(\rho)=\begin{cases}
\int_{\Omega}\frac{1}{\sqrt{\alpha_1\alpha_3}}\rho\log(\rho)\,dx, \quad \hspace{1.4cm}\text{ if }r_1=r_3=1,\\[1ex]
\int_{\Omega}\frac{4\rho^{2-(r_1+r_3)/2}}{(2-r_1-r_3)(4-r_1-r_3)\sqrt{\alpha_1\alpha_3}}\,dx, \quad \text{ else},\\
\end{cases}
\end{align*}
ensures that the relation \eqref{v3-form} holds.
Since $V_1, V_2, V_3$ in Table \ref{table:1}
are positive and concave, $F$ is concave, and $G$ is convex,  the objection functional in the MFC problem \eqref{smfc2A} is convex under a linear constraint. Hence there exists a minimizer. 

\begin{remark}[Comparison with existing models]
When the terminal density $\rho(T,\cdot)=\rho^1$ is prescribed (or when $\gamma\rightarrow +\infty$ 
for the terminal function in Table \ref{table:1}), the problem \eqref{smfc2A} is called the planning problem for MFC. This problem with $\beta=0$ (no $V_3$) was consider in the work \cite{LiLeeOsher22}. 

Taking the regularization parameter $\beta>0$ in MFC \eqref{smfc2A} forms a Schr\"odinger bridge type problem  \cite{ChenGeorgiouPavon2016_relationd,LegerLi2021_hopfcole}. A related planning problem with $\beta>0$ and no potential  $F=0$ was considered recently in \cite{ChenBenvenisteTannenbaum2023_unbalanced}, known as unbalanced regularized optimal mass transport (urOMT). It has applications in cancer imaging. 
Therein, $V_1(\rho)=V_3(\rho)=\rho$, $V_2(\rho)=\alpha(t,x)\rho$ for $\alpha\ge 0$,
and the objective function has an additional term 
$\int_0^T\int_{\Omega}\beta s\log(\rho)dxdt$ compared with our formulation \eqref{smfc2A1} under the same linear constraint \eqref{smfc2A2}.  
\end{remark}

 \subsection{MFC for reaction-diffusion systems}
In this subsection, we introduce a system of MFC problem. 
It can be viewed as a generalization of the system distance \eqref{DisS}.
\begin{definition}[System MFC problem]
\label{vmfc}
Suppose there exist $M\in\mathbb{N}_+$ species and $R\in\mathbb{N}_+$ reactions with reaction coefficient matrix $\Gamma=(\gamma_{i,p})\in\mathbb{R}^{M\times R}$. Let $V_{1,i}:\mathbb{R}_+\rightarrow\mathbb{R}_+$
for $1\le i\le M$,
and
$V_{2,p}:\mathbb{R}_+^M\rightarrow\mathbb{R}_+$
for $1\le p\le R$
be positive mobility functions.   
Take terminal time $T>0$, and non-negative regularization parameter $\beta\ge0$. Choose energy functionals $\mathcal{E}_i\colon \mathcal{M}\rightarrow\mathbb{R}$ for $1\le i\le M$, 
an potential functional $\bm{\mathcal{F}}\colon \mathcal{M}^M\rightarrow\mathbb{R}$, 
and a terminal functional 
$\bm{\mathcal{G}}\colon \mathcal{M}^M\rightarrow\mathbb{R}$. 
\begin{subequations}\label{vmfcA}
Consider 
\begin{equation}\label{vmfcA1}
\begin{split}
&\inf_{\bmr, \tilde{\cmm}, \tilde\bso}\quad\int_0^T\left[\int_\Omega
\left(\sum_{i=1}^M\frac{|\tilde\bmm_i|^2}{2V_{1,i}(\rho_i)}+
\sum_{p=1}^R\frac{|\tilde s_p|^2}{2V_{2,p}(\bmr)}\right)dx
-\bm{\mathcal{F}}(\bmr)\right]dt+\bm{\mathcal{G}}(\bmr(T,\cdot)), 
\end{split}
\end{equation}
where the infimum is among all densities
$\bmr=(\rho_1,\cdots,\rho_M)$ with $\bmr(t,\cdot)\in\mathcal{M}^M$, fluxes
$\tilde\cmm=(\tilde\bmm_1,\cdots,\tilde\bmm_M)$ with $\tilde{\bmm}_i\colon [0, T]\times \Omega \rightarrow\mathbb{R}^d$, and sources $\tilde \bso=(\tilde s_1, \cdots, \tilde s_M)\colon [0, T]\times \Omega \rightarrow\mathbb{R}^M$, such that
\begin{equation}\label{smfcA2}
\begin{split}
&\partial_t \rho_i + \nabla\cdot \tilde{\bmm}_i-
\sum_{p=1}^R\gamma_{i,p}\tilde s_p\\
=&\;\beta
\left(\nabla\cdot(V_{1,i}(\rho_i)\nabla\frac{\delta}{\delta \rho}\mathcal{E}_i(\rho_i))-
\sum_{p=1}^R\gamma_{i,p}V_{2,p}(\bmr)\sum_{j=1}^M\gamma_{j,p}\frac{\delta}{\delta\rho}\mathcal{E}_j(\rho_j)\right),  
\end{split}
\end{equation}
with boundary condition 
\begin{align}
\label{smfcA3}
\left.\left(\tilde{\bmm}_i-\beta V_{1,i}(\rho_i)\nabla\frac{\delta}{\delta \rho}\mathcal{E}_i(\rho_i)\right)\cdot\nnu\right|_{\partial\Omega} =0,
\end{align}
for all $1\le i\le M$, 
and fixed initial density $\bmr(0,\cdot) = \bmr^0\in\mathcal{M}^M$. 
\end{subequations}
\end{definition}

Again, our numerical scheme is based on the following equivalent formulation after the change of variables, whose proof is given in the Appendix.
\begin{proposition}[System MFC reformulation]\label{vmfc2}
For $1\le i\le M$ and $1\le p\le R$, denote $\bmm_i\colon [0,T]\times\Omega\rightarrow\mathbb{R}^d$ 
 and $s_p\colon [0,T]\times\Omega\rightarrow\mathbb{R}$, such that 
\begin{equation*}
 \bmm_i=\tilde\bmm_i-\beta V_{1_i}(\rho_i)\nabla\frac{\delta}{\delta \rho}\mathcal{E}_i(\rho_i),\quad 
 s_p=\tilde s_p-
 \beta V_{2,p}(\bmr)
 \sum_{j=1}^M\gamma_{j,p}\frac{\delta}{\delta\rho}\mathcal{E}_j(\rho_j), 
\end{equation*}
and denote
\begin{equation}
\label{v3-forms}
   V_{3,i}(\rho_i)=\frac{1}{V_{1,i}(\rho_i)|\frac{\delta^2}{\delta \rho^2}\mathcal{E}_i(\rho_i)|^2}. 
\end{equation}
Then, the system MFC problem \eqref{vmfc} is equivalent to the following optimization problem:
Consider 
\begin{subequations}\label{vmfc2A}
\begin{equation}\label{vmfc2A1}
\begin{split}
&\inf_{\rho,\bmm, s} ~~ \int_0^T\int_\Omega 
\sum_{i=1}^M\left(\frac{|\bmm_i|^2}{2V_{1,i}(\rho_i)}+\beta^2\frac{|\nabla \rho_i|^2}{2V_{3,i}(\rho_i)}\right)dxdt\\
&\hspace{1.0cm}+\int_0^T\int_\Omega
\sum_{p=1}^R\left(\frac{|s_p|^2}{2V_{2,p}(\bmr)}
+\frac{\beta^2}{2}{|\sum_{j=1}^M\gamma_{j,p}\frac{\delta\mathcal{E}_j}{\delta \rho}(\rho_j)|^2}{V_{2,p}(\bmr)}
\right)dxdt\\
&\hspace{1.0cm}-\int_{0}^T\bm{\mathcal{F}}(\bmr)\,dt+\bm{\mathcal{G}}(\bmr(T,\cdot))+\beta\sum_{i=1}^M[\mathcal{E}_i(\rho_i(T,\cdot))-
\mathcal{E}_i(\rho_{i}^0)], 
\end{split}
\end{equation}
subject to the constraints
\begin{equation}\label{vmfc2A2}
\partial_t \rho_i(t, x) + \nabla\cdot \bmm_i(t,x)-\sum_{p=1}^R\gamma_{i,p}s_p(t,x)=0, 
\end{equation}
with no-flux boundary condition
$
 \bmm_i\cdot\nnu|_{\partial\Omega}=0, 
 $
 for all $1\le i\le M$,
\end{subequations}
and fixed initial density $\bmr(0,\cdot)=\bmr_0=(\rho_{1}^0,\cdots, \rho_{M}^0)\in\mathcal{M}^M$. 
\end{proposition}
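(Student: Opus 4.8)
The plan is to mirror the scalar argument sketched after Proposition \ref{smfc2}, carrying it out component by component and summing over species and reactions. First I would substitute the change of variables $\tilde\bmm_i = \bmm_i + \beta V_{1,i}(\rho_i)\nabla\frac{\delta}{\delta\rho}\mathcal{E}_i(\rho_i)$ and $\tilde s_p = s_p + \beta V_{2,p}(\bmr)\sum_j\gamma_{j,p}\frac{\delta}{\delta\rho}\mathcal{E}_j(\rho_j)$ directly into the original dynamical constraint \eqref{smfcA2}; the $\beta$-terms cancel against its right-hand side, leaving exactly the transformed continuity equation \eqref{vmfc2A2}, while the transformed boundary condition \eqref{smfcA3} reduces to $\bmm_i\cdot\nnu|_{\partial\Omega}=0$. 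This shows the two constraint sets are in bijection under the substitution, so only the two objective functionals must be matched.

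Next I would expand each squared term in the original kinetic energy \eqref{vmfcA1}. Completing the square in $\frac{|\tilde\bmm_i|^2}{2V_{1,i}(\rho_i)}$ produces three pieces: $\frac{|\bmm_i|^2}{2V_{1,i}(\rho_i)}$, a cross term $\beta\,\bmm_i\cdot\nabla\frac{\delta\mathcal{E}_i}{\delta\rho}$, and the quadratic $\frac{\beta^2}{2}V_{1,i}(\rho_i)|\nabla\frac{\delta\mathcal{E}_i}{\delta\rho}|^2$. Using that each $\mathcal{E}_i$ is a local functional so that $\nabla\frac{\delta\mathcal{E}_i}{\delta\rho} = \frac{\delta^2\mathcal{E}_i}{\delta\rho^2}\nabla\rho_i$, together with the defining relation \eqref{v3-forms} for $V_{3,i}$, this last piece collapses to $\frac{\beta^2|\nabla\rho_i|^2}{2V_{3,i}(\rho_i)}$. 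The analogous expansion of $\frac{|\tilde s_p|^2}{2V_{2,p}(\bmr)}$ yields $\frac{|s_p|^2}{2V_{2,p}(\bmr)}$, a cross term $\beta\,s_p\sum_j\gamma_{j,p}\frac{\delta\mathcal{E}_j}{\delta\rho}$, and the reaction Fisher-information quadratic $\frac{\beta^2}{2}V_{2,p}(\bmr)|\sum_j\gamma_{j,p}\frac{\delta\mathcal{E}_j}{\delta\rho}|^2$. Summing over $i$ and $p$ reproduces, term for term, every quadratic appearing in \eqref{vmfc2A1}.

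The heart of the argument is showing that all the cross terms integrate to the single boundary contribution $\beta\sum_i[\mathcal{E}_i(\rho_i(T,\cdot))-\mathcal{E}_i(\rho_i^0)]$. I would integrate $\int_0^T\int_\Omega \beta\,\bmm_i\cdot\nabla\frac{\delta\mathcal{E}_i}{\delta\rho}\,dxdt$ by parts in space; the boundary integral vanishes by $\bmm_i\cdot\nnu|_{\partial\Omega}=0$, leaving $-\beta\int_0^T\int_\Omega(\nabla\cdot\bmm_i)\frac{\delta\mathcal{E}_i}{\delta\rho}\,dxdt$. After swapping the order of the $j$ and $p$ summations in the reaction cross term and relabelling $j\to i$, the two families of cross terms combine into $\beta\int_0^T\int_\Omega\sum_i\frac{\delta\mathcal{E}_i}{\delta\rho}\bigl(-\nabla\cdot\bmm_i+\sum_p\gamma_{i,p}s_p\bigr)\,dxdt$. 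Invoking the transformed constraint \eqref{vmfc2A2} replaces the parenthetical factor by $\partial_t\rho_i$, and the functional chain rule $\int_\Omega\frac{\delta\mathcal{E}_i}{\delta\rho}\partial_t\rho_i\,dx=\frac{d}{dt}\mathcal{E}_i(\rho_i)$ turns the time integral into the telescoping boundary term. Since $\bm{\mathcal{F}}$ and $\bm{\mathcal{G}}$ are untouched by the substitution, the two objectives coincide and the equivalence follows.

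I expect the index bookkeeping in the reaction cross term to be the only genuine subtlety: one must carefully rewrite $\sum_p s_p\sum_j\gamma_{j,p}\frac{\delta\mathcal{E}_j}{\delta\rho} = \sum_i\frac{\delta\mathcal{E}_i}{\delta\rho}\sum_p\gamma_{i,p}s_p$ so that it aligns, species by species, with the divergence cross term before the constraint can be applied. The remaining steps—the completing of squares, the single spatial integration by parts, and the functional chain rule—are routine and differ in no essential way from the scalar case treated under Proposition \ref{smfc2}.
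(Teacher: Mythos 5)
Your proposal is correct and follows essentially the same route as the paper's own proof: the same change of variables, the same expansion of the squares (with the $V_{3,i}$ identity absorbing the diffusion quadratic), the same spatial integration by parts of the cross terms against the no-flux condition, and the same use of the transformed constraint plus the chain rule in time to produce the telescoping term $\beta\sum_i[\mathcal{E}_i(\rho_i(T,\cdot))-\mathcal{E}_i(\rho_i^0)]$. The index manipulation $\sum_p s_p\sum_j\gamma_{j,p}\frac{\delta\mathcal{E}_j}{\delta\rho}=\sum_i\frac{\delta\mathcal{E}_i}{\delta\rho}\sum_p\gamma_{i,p}s_p$ that you flag as the only subtlety is exactly the step the paper performs as well, so no gap remains.
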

Note that when $M=1$ and $R=1$, the MFC problem \eqref{vmfc2A} corresponds to the scalar case considered in Definition \eqref{smfc}.

Similar to the scalar case, we assume that there are two functions $\bm F:[0,T]\times\Omega\times \mathbb{R}_+^M\rightarrow \mathbb{R}$
and $\bm G:\Omega\times\mathbb{R}_+^M\rightarrow \mathbb{R}$, such that 
\begin{align*}
\int_{\Omega}\bm F(t, x, \bmr)dx = &\;
\bm{\mathcal{F}}(\bmr(t,\cdot))-\int_{\Omega}\sum_{p=1}^{R}\frac{\beta^2}{2}{|\sum_{j=1}^{M}\frac{\delta\mathcal{E}_j}{\delta \rho}(\rho_j)|^2}{V_{2,p}(\bmr)}dx,\\
\int_{\Omega}\bm G(x,\bmr)dx = &\;
\bm{\mathcal{G}}(\bmr)+\beta\sum_{i=1}^M\mathcal{E}_i(\rho_i).
\end{align*}
This simplifies the functional in \eqref{vmfc2A1} as follows:
\begin{equation*}
\begin{split}
&\int_0^T\int_\Omega \left[
\sum_{i=1}^M\left(\frac{|\bmm_i|^2}{2V_{1,i}(\rho_i)}+\beta^2\frac{|\nabla \rho_i|^2}{2V_{3,i}(\rho_i)}\right)+ \sum_{p=1}^R\frac{|s_p|^2}{2V_{2,p}(\bmr)}-\bm{F}(t,x,\bmr)\right]dxdt\\
&+
\int_{\Omega}
\bm{G}(x, \bmr(T,x))dx, 
\end{split}
\end{equation*}

The KKT system for the optimization problem \eqref{vmfc2A} is given below. Its proof is again presented in the Appendix. 
\begin{proposition}[KKT system for \eqref{vmfc2A}]\label{vmfcKKT}
Let $(\bmr, \cmm, \bso)$ be the critical point of the optimization problem \eqref{vmfc2A}. Then there exists a function $\bm{\phi}=(\phi_1,\cdots,\phi_M)\colon [0, T]\times\Omega\rightarrow \mathbb{R}^M$, such that 
\begin{equation*}
\frac{\bmm_i(t,x)}{V_{1,i}(\rho_i(t,x))}=\nabla \phi_i(t,x),\quad \frac{s_p(t,x)}{V_{2,p}(\bmr(t,x))}=
\sum_{j=1}^M\gamma_{j,p}\phi_j(t,x),
\end{equation*}
and 
\begin{equation}\label{SBPI}
\left\{\begin{aligned}
&\partial_t\rho_i +\nabla\cdot(V_{1,i}(\rho_i)\nabla \phi_i)-\sum_{p=1}^RV_{2,p}(\bmr)\gamma_{i,p}\sum_{j=1}^M\gamma_{j,p}\phi_j=0,\\
&\partial_t\phi_i+\frac{1}{2}\|\nabla \phi_i\|^2 V_{1,i}'(\rho_i)+\frac{1}{2}\sum_{p=1}^R|\sum_{j=1}^M\gamma_{j,p}\phi_j|^2
\frac{\partial}{\partial \rho_i}V_{2,p}(\bmr)\\
&\hspace{6.3cm}+\frac{\delta}{\delta \rho_i}\Big[\bm{\mathcal{F}}(\bmr)-\frac{\beta^2}{2}\bm{\mathcal{I}}(\bmr)\Big]=0,
\end{aligned}\right.
\end{equation}
where $\bm{\mathcal{I}}(\bmr)$
is the generalized Fisher information functional given in \eqref{fisherS},
with initial and terminal time boundary conditions
\begin{equation}
   \rho_i(0,x)=\rho_i^0(x),\quad \phi_i(T,x)=-\frac{\delta}{\delta \rho_i}\Big(\bm{\mathcal{G}}(\bmr(T,\cdot))+\beta \mathcal{E}_i(\rho_i(T,\cdot))\Big).  
\end{equation}
\end{proposition}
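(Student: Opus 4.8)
The plan is to treat \eqref{vmfc2A} as an equality-constrained minimization problem and to extract its critical point system by the Lagrange multiplier method, adjoining to the objective one multiplier $\phi_i\colon[0,T]\times\Omega\to\mathbb{R}$ for each continuity constraint in \eqref{vmfc2A2}. Before differentiating, I would rewrite the objective \eqref{vmfc2A1} in the compact form
\begin{multline*}
\mathcal{J}=\int_0^T\!\!\int_\Omega\Big[\sum_{i=1}^M\frac{|\bmm_i|^2}{2V_{1,i}(\rho_i)}+\sum_{p=1}^R\frac{|s_p|^2}{2V_{2,p}(\bmr)}\Big]dxdt+\frac{\beta^2}{2}\int_0^T\bm{\mathcal{I}}(\bmr)\,dt\\
-\int_0^T\bm{\mathcal{F}}(\bmr)\,dt+\bm{\mathcal{G}}(\bmr(T,\cdot))+\beta\sum_{i=1}^M\big[\mathcal{E}_i(\rho_i(T,\cdot))-\mathcal{E}_i(\rho_i^0)\big],
\end{multline*}
using the relation \eqref{v3-forms} together with the chain rule $\nabla\frac{\delta}{\delta\rho}\mathcal{E}_i(\rho_i)=\frac{\delta^2}{\delta\rho^2}\mathcal{E}_i(\rho_i)\,\nabla\rho_i$ to identify $\beta^2\frac{|\nabla\rho_i|^2}{2V_{3,i}(\rho_i)}=\frac{\beta^2}{2}\|\nabla\frac{\delta}{\delta\rho}\mathcal{E}_i(\rho_i)\|^2V_{1,i}(\rho_i)$, so that the gradient terms and the reaction terms in \eqref{vmfc2A1} together reassemble precisely $\frac{\beta^2}{2}\bm{\mathcal{I}}(\bmr)$ as defined in \eqref{fisherS}. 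The Lagrangian is then $\mathcal{L}=\mathcal{J}+\sum_{i=1}^M\int_0^T\int_\Omega\phi_i\big(\partial_t\rho_i+\nabla\cdot\bmm_i-\sum_{p=1}^R\gamma_{i,p}s_p\big)\,dxdt$.

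The algebraic variations come first. Setting $\frac{\delta\mathcal{L}}{\delta\bmm_i}=0$, after integrating $\phi_i\,\nabla\cdot\bmm_i$ by parts in space (the boundary contribution vanishing by the no-flux condition $\bmm_i\cdot\nnu|_{\partial\Omega}=0$), yields $\frac{\bmm_i}{V_{1,i}(\rho_i)}=\nabla\phi_i$; setting $\frac{\delta\mathcal{L}}{\delta s_p}=0$ yields $\frac{s_p}{V_{2,p}(\bmr)}=\sum_{j=1}^M\gamma_{j,p}\phi_j$, which are exactly the two stated relations. Substituting these back into the constraint recovered from $\frac{\delta\mathcal{L}}{\delta\phi_i}=0$ gives the forward equation $\partial_t\rho_i+\nabla\cdot(V_{1,i}(\rho_i)\nabla\phi_i)-\sum_{p=1}^R V_{2,p}(\bmr)\gamma_{i,p}\sum_{j=1}^M\gamma_{j,p}\phi_j=0$, the first line of the critical point system \eqref{SBPI}.

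The remaining and most delicate variation is $\frac{\delta\mathcal{L}}{\delta\rho_i}=0$. I would differentiate $\mathcal{J}$ in $\rho_i$ term by term: the two kinetic terms contribute $-\tfrac12|\nabla\phi_i|^2V_{1,i}'(\rho_i)$ and $-\tfrac12\sum_{p=1}^R|\sum_{j=1}^M\gamma_{j,p}\phi_j|^2\frac{\partial}{\partial\rho_i}V_{2,p}(\bmr)$ once $\bmm_i$ and $s_p$ are expressed through $\phi$ via the relations just derived; the potential and Fisher terms contribute $-\frac{\delta}{\delta\rho_i}[\bm{\mathcal{F}}(\bmr)-\frac{\beta^2}{2}\bm{\mathcal{I}}(\bmr)]$; and the constraint term $\phi_i\,\partial_t\rho_i$, integrated by parts in time, contributes $-\partial_t\phi_i$ in the interior. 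Collecting these and multiplying by $-1$ produces the second line of \eqref{SBPI}. The same time integration by parts leaves a terminal boundary term $\int_\Omega\phi_i(T,\cdot)\,\eta\,dx$ whose stationarity against the terminal functional $\bm{\mathcal{G}}(\bmr(T,\cdot))+\beta\sum_{i}\mathcal{E}_i(\rho_i(T,\cdot))$ yields $\phi_i(T,x)=-\frac{\delta}{\delta\rho_i}(\bm{\mathcal{G}}(\bmr(T,\cdot))+\beta\mathcal{E}_i(\rho_i(T,\cdot)))$, while the prescribed initial data furnishes $\rho_i(0,x)=\rho_i^0(x)$. The main obstacle is the bookkeeping in this last step: computing the full functional derivative $\frac{\delta}{\delta\rho_i}\bm{\mathcal{I}}(\bmr)$ requires a spatial integration by parts of the diffusion density $\|\nabla\frac{\delta}{\delta\rho}\mathcal{E}_i\|^2V_{1,i}$, and one must check that every spatial boundary term generated here cancels under the homogeneous Neumann conditions, keeping track of the sign conventions so that the assembled adjoint equation matches \eqref{SBPI} exactly. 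The concavity and positivity hypotheses on $V_{1,i},V_{2,p},V_{3,i}$, which render $\mathcal{J}$ convex under the linear constraint, then guarantee that this critical point is indeed the sought minimizer.
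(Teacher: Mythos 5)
Your proposal is correct and follows essentially the same route as the paper: the paper's Appendix proof likewise adjoins the multipliers $\phi_i$ to the constraints, writes the objective with the Fisher term reassembled as $\frac{\beta^2}{2}\bm{\mathcal{I}}(\bmr)$, and reads off the stated relations and the forward--adjoint system from the first-order conditions $\frac{\delta\mathcal{L}}{\delta\bmm_i}=\frac{\delta\mathcal{L}}{\delta s_p}=\frac{\delta\mathcal{L}}{\delta\rho_i}=\frac{\delta\mathcal{L}}{\delta\phi_i}=\frac{\delta\mathcal{L}}{\delta\bmr_T}=0$, with the terminal condition coming from the variation in $\bmr_T$ exactly as you describe. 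The only cosmetic difference is that the paper does not spell out the identification $\beta^2\frac{|\nabla\rho_i|^2}{2V_{3,i}}=\frac{\beta^2}{2}\|\nabla\frac{\delta}{\delta\rho}\mathcal{E}_i\|^2V_{1,i}$, which you make explicit.
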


\subsubsection{Examples}
Let us consider examples with only pairwise interactions. This setting is similar to the reversible Markov chains on discrete states (symmetric weighted graph) \cite{GaoLiLiu,Mielke11, Maas2011_gradientb}. We assume the mobility function $V_{2,p}$ only depends on {\it two} densities $\rho_{p_0}$ and $\rho_{p_1}$
for $p_0\not= p_1\in\{1,\cdots, M\}$ for $1\le p\le R$. 
We take the reaction coefficient $\Gamma=(\gamma_{i,p})\in \mathbb{R}^{M\times R}$, such that
\begin{align}
\label{rr}
\gamma_{i,p}= \begin{cases}
1 & \text{if } i=p_0,\\
-1 & \text{if } i=p_1,\\
0 & \text{else}.
\end{cases}
\end{align}
These reactions naturally lead to a graph with $M$
vertices $\{X_1,\cdots, X_M\}$ and 
$R$ edges $\{E_1,\cdots, E_R\}$ with 
$E_p=(X_{p_0}, X_{p_1})$ connecting vertices $X_{p_0}$ and $X_{p_1}$, where each vertex represents a density species, and each edge represents a reaction between two densities. 
    
    

Different choices of the functions 
$V_{1,i}$, $V_{2,p}$, $V_{3,i}$, $\bm F$, $\bm G$, 
along with the reaction coefficient matrix $\Lambda\in\mathbb{R}^{M\times R}$ in \eqref{rr},
regularization parameter $\beta\ge0$ and initial density $\bmr^0$, 
lead to various system MFC problems \eqref{vmfc}. We present some examples of these functions in Table \ref{table:2} below. 
\begin{table}[ht!]
\centering
\begin{tabular}{l l}
\hline
$V_{1,i}(\rho_i)$: & $\alpha_{1}\rho_i^{\gamma_1}$ with $ 0\le \gamma_1\le 1$. $\alpha_1>0$. \\
$V_{2,p}(\bmr)$: & $\alpha_2(\rho_{p_0}+\rho_{p_1})/2$, or 
$\alpha_2\sqrt{\rho_{p_0}\rho_{p_1}}$,\\
&or 
$\alpha_2\frac{\rho_{p_0}\rho_{p_1}}{\rho_{p_0}+\rho_{p_1}}$, or 
$\alpha_2\frac{\rho_{p_0}-\rho_{p_1}}{\log(\rho_{p_0})-\log(\rho_{p_1})}$. $\alpha_2\ge 0$\\
$V_{3,i}(\rho_i)$: & $\alpha_3\rho_i^{\gamma_3}$ with $ 0\le \gamma_3\le 1$. $\alpha_3>0$. \\
$\bm F(t,x,\bmr)$: & $-
\sum_{i=1}^M\tau_i\rho_i(\log(\rho_i)+V_i(t,x))$, \\
& or 
$-\sum_{i=1}^M\tau_i(\rho_i^m+\rho_i V_i(t,x))$.
$\tau_i\ge 0$, $m> 1$. \\
$G(x,\rho)$: & $\gamma\sum_{i=1}^M(\rho_i-\rho_i^1)^2$, or 
$\gamma\sum_{i=1}^M\rho_i\log(\rho_i/\rho_i^1)$.
$\gamma> 0$. \\
\hline
\end{tabular}
\caption{Example of functions for system MFC \eqref{vmfc}.}
\label{table:2}
\end{table}
Here we can again take the energies
\begin{align*}
\mathcal{E}_i(\rho_i)=\begin{cases}
\int_{\Omega}\frac{1}{\sqrt{\alpha_1\alpha_3}}\rho_i\log(\rho_i)\,dx, \quad \hspace{1.4cm}\text{ if }r_1=r_3=1,\\[1ex]
\int_{\Omega}\frac{4\rho_i^{2-(r_1+r_3)/2}}{(2-r_1-r_3)(4-r_1-r_3)\sqrt{\alpha_1\alpha_3}}\,dx, \quad \text{ else },\\
\end{cases}
\end{align*}
so that the relations \eqref{v3-forms} hold.
Here, the four choices for $V_{2,p}$ correspond to arithmetic, geometric, harmonic, and logarithmic averages of two densities; see \cite{GaoLiLiu,Mielke11, Maas2011_gradientb}. 
Similar to the scalar case, $V_{1,i}, V_{2,p}, V_{3,i}$ in Table \ref{table:2}
are positive and concave, $\bm F$ is concave, and $\bm G$ is convex. Hence the objective functional in MFC problem \eqref{vmfc2A} is convex under a linear constraint. A minimizer is guaranteed. 

This formulation naturally allows general nonlinear reactions among more than two densities. We leave the detailed modeling study of MFC for more general reaction-diffusion systems in future work.

\section{High order discretizations and optimization algorithms}\label{sec3}
In this section, we present the high-order spatial-time finite element formulations for the proposed MFC problems \eqref{smfc2A} and \eqref{vmfc2A}.  
\subsection{Scalar case}\label{sec3:scalar}
In this subsection, we discretize the scalar model \eqref{smfc2A}.
By introducing a new approximation variable $\nn=\beta\nabla\rrho$, the optimization problem \eqref{smfc2A} is rewritten as follows.
Consider 
\begin{subequations}\label{model-scalar}
\begin{equation}\label{MS1}
\inf_{\rrho,\mm, \so, \nn} ~ \int_0^T\int_\Omega \Big[\frac{\|\mm\|^2}{2 V_1(\rrho)}+ \frac{|\so|^2}{2V_2(\rrho)}
+\frac{|\nn|^2}{2V_3(\rrho)}
-\FF(\rrho)\Big]dxdt +
\int_{\Omega}\GG(\rrho(T,x))dx, 
\end{equation}
where the infimum is taken over density $\rrho\colon [0, T]\times \Omega\rightarrow\mathbb{R}_+$, flux $\mm\colon [0, T]\times \Omega \rightarrow\mathbb{R}^d$, source $\so\colon [0, T]\times \Omega \rightarrow\mathbb{R}$, and 
vector field 
$\nn\colon [0, T]\times \Omega \rightarrow\mathbb{R}^d$,
such that
\begin{align}
\label{MS2}
\partial_t \rrho + \nabla\cdot \mm-\so&\;=0, \\
\label{MS3}
\nn - \beta \nabla \rrho &\;= 0,
\end{align}
\end{subequations}
on $[0,T]\times \Omega$,
with fixed initial density $\rrho(0,x)=\rrho^0(x)$ in $\Omega$
and no-flux boundary condition $\mm\cdot\nnu=0$ on $[0,T]\times \partial\Omega$. 
Here we suppress the coordinate dependence of functions $F$ and $G$ for simplicity of presentation.

Following the high-order finite element scheme for MFC problems in \cite{FuLiu23}, we discretize the constraint optimization problem \eqref{model-scalar} using high-order finite element methods 
and apply the ALG2 optimization algorithm to solve the discrete saddle-point problem. 
\subsubsection{The continuous saddle-point formulation}
We introduce a scalar Lagrange multiplier $\pphi\in H^1(\Omega_T)$, where $\Omega_T:=[0,T]\times\Omega$ is the space-time domain, 
for the constraint \eqref{MS2}, 
and a vectorial Lagrange multiplier $\ssigma\in L^2([0,T])\otimes H(\mathrm{div}_0;\Omega)$ for the constraint \eqref{MS3},  where 
\[
H(\mathrm{div}_0;\Omega):=\{
\ttau\in H(\mathrm{div};\Omega): 
\ttau\cdot\nnu|_{\partial\Omega} = 0\}, 
\]
in which the H(div)-conforming space
\[
H(\mathrm{div};\Omega):=\{
\ttau\in [L^2(\Omega)]^d: 
\nabla\cdot\ttau\in L^2(\Omega)
\}.
\]
We reformulate the constrained optimization problem \eqref{model-scalar}
into the following saddle-point system:
Find the critical point of the system
\begin{equation}\label{SD-scalarZ}
\begin{split}
\inf_{\rrho,\mm, \so, \nn, \rrho_T} \sup_{\pphi, \ssigma}~& \int_0^T\int_\Omega \Big[\frac{\|\mm\|^2}{2 V_1(\rrho)}+ \frac{|\so|^2}{2V_2(\rrho)}
+\frac{|\nn|^2}{2V_3(\rrho)}
-\FF(\rrho)\Big]dxdt \\
& \hspace{-0.8cm}
-\int_0^T\int_\Omega \Big[(\rrho\, \partial_t \pphi
+\mm\cdot\nabla \pphi+\so\pphi)
+(\nn\cdot\ssigma+\beta\rho\nabla\cdot\ssigma)
\Big] 
 dxdt\\
 & \hspace{-0.8cm}+
 \int_\Omega \left(\rrho_T\, \pphi(T,\cdot)
-\rrho^0\,\pphi(0,\cdot)\right)
 dx
  +\int_{\Omega}\GG(\rrho_T)dx,
\end{split}
\end{equation} 
where
the variables $\mm,\nn\in [L^2(\Omega_T)]^d$, $\rrho, \so\in L^2(\Omega_T)$, 
$\rho_T\in L^2(\Omega)$
with $\rrho,\rrho_T\ge 0$ a.e., $\pphi\in H^1(\Omega_T)$, 
and $\ssigma\in L^2([0,T])\otimes H(\mathrm{div}_0;\Omega)$.
Notice that integration by parts is used to incorporate the linear constraints \eqref{MS2}--\eqref{MS3} to the saddle point problem \eqref{SD-scalarZ}. Moreover, since no derivative information is needed for the physical variables $\mm, \nn, \rrho$ and $\so$, it is natural to approximate them using $L^2$-conforming spaces.

To simplify the notation, we collect the variables
\begin{subequations}
\label{scalar-collect}
\begin{align}
\label{sc1}
\uu:=\;(\rrho, \mm, \so, \nn), \text{ and }
\Pphi:=\;(\pphi, \ssigma),
\end{align}
and define the differential operator 
$\mathcal{D}$
by 
\begin{equation}
\label{sc2}
\mathcal{D}(\Pphi):=
(\partial_t\pphi+\beta\nabla\cdot\ssigma, \nabla\pphi, \pphi, \ssigma)\in [L^2( \Omega_T)]^{2d+2}.
\end{equation}
\end{subequations}
With this notation, the above saddle-point problem simplifies to the following:
Find the critical point of  
\begin{equation}\label{SD-scalar}
\begin{split}
\inf_{\uu, \rrho_T} \sup_{\Pphi}~& \int_0^T\int_\Omega\Big[\HH(\uu)
-\uu\cdot\mathcal{D}(\Pphi)\Big]
\,dxdt \\
 & \hspace{-0.2cm}+
 \int_\Omega \Big[\GG(\rrho_T)+\rrho_T\, \pphi(T,\cdot)
-\rrho^0\,\pphi(0,\cdot)\Big]
 dx,
\end{split}
\end{equation} 
where 
the variables $\uu\in [L^2(\Omega_T)]^{2d+2}$, $\rho_T\in L^2(\Omega)$
with $\rrho, \rrho_T\ge 0 $ a.e., and 
$\Pphi\in H^1(\Omega_T) \times  \Big(L^2([0,T])\otimes H(\mathrm{div}_0;\Omega)\Big)$, 
the nonlinear functional $\HH(\uu)$ is given by
\[
\HH(\uu):=
\frac{\|\mm\|^2}{2 V_1(\rrho)}+ \frac{|\so|^2}{2V_2(\rrho)}
+\frac{|\nn|^2}{2V_3(\rrho)}
-\FF(\rrho),
\]
and the inner-product is defined by
\[
\uu\cdot\mathcal{D}(\Pphi):=
\rrho (\partial_t \pphi+\beta\nabla\cdot\ssigma)
+\mm\cdot\nabla \pphi+\so\,\pphi+\nn\cdot\ssigma.
\]

\subsubsection{The high-order finite element scheme and numerical integration}
Following the work \cite{FuLiu23}, we approximate the physical variables $\mm, \nn, \so$, $\rrho$ using (discontinuous) high-order integration rule spaces, 
the variable $\pphi$ using a high-order $H^1$-conforming finite element space, and 
the variable $\ssigma$ using a vectorial $H^1$-conforming finite element space.
Let $\Omega_h=\{T_\ell\}_{\ell=1}^{N_S}$ be a conforming mesh of the spatial domain $\Omega$ with $N_S$ elements, and 
$I_h=\{I_j\}_{j=1}^{N_T}$ be a uniform discretization of the temporal domain $[0, T]$
with $N_T$ line segments. We denote the space-time mesh $\Omega_{T,h}:=I_h\otimes\Omega_h$. 
For simplicity, we restrict ourselves to the case where the domain $\Omega=[0,1]^d$ is a unit hypercube and its spatial mesh $\Omega_h$ is a Cartesian mesh with uniform hypercubic elements. 
Given a polynomial degree $k\ge 1$,
we denote the following $H^1$ 
and $L^2$ finite element spaces:
\begin{align}
\label{fes-V}
V_h^k:=&\;\{v\in H^1(\Omega_T): \;\; v|_{I_j\times T_\ell}
\in Q^k(I_j)\otimes Q^k(T_\ell)\;\;
\forall j, \ell\},\\
\label{fes-W}
W_h^{k-1}:=&\;\{w\in L^2(\Omega_T): \;\; w|_{I_j\times T_\ell}
\in Q^{k-1}(I_j)\otimes Q^{k-1}(T_\ell)\;\;
\forall j, \ell\},\\
\label{fes-M}
M_h^{k-1}:=&\;\{\mu\in L^2(\Omega):\;\;\; \;\; \mu|_{T_\ell}
\in Q^{k-1}(T_\ell)\;\;\forall \ell\},
\end{align}
where $Q^k(S)$ is the tensor-product polynomial space of degree no greater than $k$ in each coordinate direction.

Using the above finite element spaces, we obtain the following discrete saddle point problem:
Find the critical point of the discrete system
\begin{equation}\label{SD-scalarh}
\begin{split}
\inf_{\uu_h, \,\rrho_{T,h}} \sup_{\Pphi_h}~& \int_0^T\int_\Omega\Big[\HH(\uu_h)
-\uu_h\cdot\mathcal{D}(\Pphi_h)\Big]
\,dxdt \\
 & \hspace{-0.2cm}+
 \int_\Omega \Big[\GG(\rrho_{T,h})+\rrho_{T,h}\, \pphi_h(T,\cdot)
-\rrho^0\,\pphi_h(0,\cdot)\Big]
 dx,
\end{split}
\end{equation} 
where the variables
$\uu_h:=(\rrho_h, \mm_h, \so_h, \nn_h)$
with $\mm_h,\nn_h\in [W_h^{k-1}]^d$, $\rrho, \so\in W_h^{k-1}$, 
$\rrho\ge0$ a.e., $\rho_{T,h}\in M_h^{k-1}$
with $\rrho_{T,h}\ge 0$ a.e., 
and $\Pphi_h:=(\pphi_h,\ssigma_h)$
with $\pphi_h\in V_h^k$
and $\ssigma_h\in [V_h^{k}]^d$
with $\ssigma_h\cdot\nnu|_{\partial\Omega}=0$.

\begin{remark}[On the choice of the finite element spaces]
In the scheme \eqref{SD-scalarh}, we use $L^2$-conforming spaces to approximate the 
physical variables $\uu_h$ and $\rrho_{T,h}$, which is  natural as no derivative information is needed. Moreover, we use $H^1(\Omega_T)$-conforming finite element spaces to approximate the Lagrange multipliers $\Pphi_h$. The use of $H^1$-conforming space to approximate the scalar variable $\pphi_h$ is natural as the formulation \eqref{SD-scalarh} requires space-time derivative information for $\pphi_h$.
On the other hand, the use of $H^1(\Omega_T)$-conforming space for the vectorial Lagrange multiplier $\ssigma_h$ is only for the simplicity of implementation. 
A more natural finite element space for this variable is the following:
\[
[W_h^k]^d\cap \left(L^2([0,T])\otimes H(\mathrm{div}_0;\Omega)\right),
\]
which is discontinuous in time and $H(\mathrm{div})$-conforming in space. 
We will implement this choice in future work. 

Here we use polynomial degree of one order lower to approximate the physical variables $\uu_h$ than that for the Lagrange multipliers $\Pphi_h$.
Our lowest order scheme is a staggered grid scheme where the physical variables stay in cell centers and the Lagrange multipliers stay on mesh vertices.
\end{remark}

In practice, the integrals in \eqref{SD-scalarh} are discretized using 
Gauss-Legendre quadrature rules with $k$ integration points per direction.
The total number of integration points for the space-time integral on $[0,T]\times \Omega$ is $(kN_T)\times (k^{d}N_S)$, and that for the spatial integral on $\Omega$
is $k^{d}N_S$, where  $N_T$ is the number of temporal cells and $N_T$ is the number of spatial cells.
Denoting these spatial/temporal Gauss-Legendre quadrature points as 
\begin{subequations}
\label{disc-Q}    
\begin{equation}
\label{quad-pt}
\{\bm{\xi}_i\}_{i=1}^{k^dN_S}, \text{ and } \{\eta_j\}_{j=1}^{kN_T}
\end{equation}
with their associated quadrature weights as 
\begin{equation}
\label{quad-wt}
\{\omega_i\}_{i=1}^{k^dN_S}, \text{ and } \{\zeta_j\}_{j=1}^{kN_T},
\end{equation}
we write the discrete integrals as follows:
\begin{align}
\label{d-int1}    
\stint{f(t,x)}:=&\;
\sum_{j=1}^{kN_T}
\sum_{i=1}^{k^dN_S}f(\eta_j, \bm{\xi}_i)\omega_i\zeta_j,\\
\sint{f(x)}:=&\;
\sum_{i=1}^{k^dN_S}f(\bm{\xi}_i)\omega_i.
\label{d-int2}    
\end{align}
\end{subequations}
Furthermore, we use the Gauss-Legendre basis for the $L^2$-conforming spaces $W_h^{k-1}$
and $M_h^{k-1}$ in the implementation, i.e., a function $w_h\in W_h^{k-1}$
is expressed as 
\[
w_h = 
\sum_{j=1}^{kN_T}
\sum_{i=1}^{k^dN_S} \mathsf{w}_{ij}\,\psi_j(t)\varphi_i(x),
\]
with coefficient $\mathsf{w}_{ij}\in \mathbb{R}$.
Here the tensor-product Gauss-Legendre basis function 
$\psi_j(t)\varphi_i(x)\in W_h^{k-1}$ satisfies the nodal interpolation property
\[
\psi_j(\zeta_{j'})=\delta_{jj'}, \text{ and }
\varphi_i(\bm\xi_{i'})=\delta_{ii'},
\]
where $\delta_{ii'}=1$ if $i=i'$ and 
$\delta_{ii'}=0$ if $i\not=i'$. 
A function $v_h\in M_h^{k-1}$ is expressed as 
$
v_h = 
\sum_{i=1}^{k^dN_S} \mathsf{v}_{i}\varphi_i(x)
$
with coefficient $\mathsf{v}_i\in \mathbb{R}$.
Here $\mathrm{w}_{ij}=w_h(\zeta_j,\bm\xi_i)$ is the value of $w_h$
on the space-time integration point $(\zeta_j, \bm\xi_i)$, and 
$\mathrm{v}_{i}=v_h(\bm\xi_i)$ is the value of $v_h$
on the spatial integration point $\bm\xi_i$.
Hence there holds 
\begin{alignat*}{2}
\stint{f(w_h)}:=&\;
\sum_{j=1}^{kN_T}
\sum_{i=1}^{k^dN_S}f(\mathsf{w}_{ij})\omega_i\zeta_j,&&\quad
\forall w_h=\sum_{j=1}^{kN_T}
\sum_{i=1}^{k^dN_S} \mathsf{w}_{ij}\,\psi_j(t)\varphi_i(x)\in W_h^{k-1},
\\
\sint{f(v_h)}:=&\;
\sum_{i=1}^{k^dN_S}f(\mathsf{v}_i)\omega_i,&&\quad\forall v_h
=\sum_{i=1}^{k^dN_S} \mathsf{v}_{i}\varphi_i(x)
\in M_h^{k-1}.
\end{alignat*}

With these notation, the fully discrete scheme \eqref{SD-scalarh}
with numerical integration is given as follows:
Find the critical point of the fully discrete system
\begin{equation}\label{SD-scalarh1}
\begin{split}
\inf_{\uu_h, \,\rrho_{T,h}} \sup_{\Pphi_h}~& \stint{\HH(\uu_h)
-\uu_h\cdot\mathcal{D}(\Pphi_h)}
\\
 & \hspace{-0.2cm}+
 \sint{\GG(\rrho_{T,h})+\rrho_{T,h}\, \pphi_h(T,\cdot)
-\rrho^0\,\pphi_h(0,\cdot)}.
\end{split}
\end{equation} 
Here the unknowns $\rho_{T,h}=\sum_{i=1}^{k^dN_S}\uprho_{T,i}\varphi_i(x)$
with non-negative coefficient $\uprho_{T,i}\in\mathbb{R}_+$,
$\uu_h:=(\rrho_h, \mm_h, \so_h, \nn_h)$ with
\begin{alignat*}{2}
\rrho_h=&\;\sum_{j=1}^{kN_T}
\sum_{i=1}^{k^dN_S}{\uprho}_{ij}\,\psi_j(t)\varphi_i(x),\quad
\mm_h=&&\;\sum_{j=1}^{kN_T}
\sum_{i=1}^{k^dN_S}\bm{\mathrm{m}}_{ij}\psi_j(t)\varphi_i(x),\\
\so_h=&\;\sum_{j=1}^{kN_T}
\sum_{i=1}^{k^dN_S}{\mathrm{s}}_{ij}\psi_j(t)\varphi_i(x),\quad 
\nn_h=&&\;\sum_{j=1}^{kN_T}
\sum_{i=1}^{k^dN_S}\bm{\mathrm{n}}_{ij}\psi_j(t)\varphi_i(x),
\end{alignat*}
where coefficients $\uprho_{ij}\in \mathbb{R}_+$, $\bm{\mathrm{m}}_{ij},\bm{\mathrm{n}}_{ij}\in\mathbb{R}^d$, and $\mathrm{s}_{ij}\in\mathbb{R}$,
and $\Pphi_h:=(\pphi_h,\ssigma_h)\in [V_h^k]^{d+1}$
with $\ssigma_h\cdot\nnu|_{\partial\Omega}=0$.

We further note that the use of Gauss-Legendre integral rule along with the (nodal) Gauss-Legendre bases for the discontinuous spaces $W_h^{k-1}$ and $M_h^{k-1}$ significantly simplifies the optimization problem \eqref{SD-scalarh1}, as no 
explicit degrees of freedom coupling is introduced among the physical variables $\uu_h$ and 
$\rrho_{T,h}$.
In particular, given a fixed Lagrange multiplier $\Pphi_h$, the nonlinear optimization problem \eqref{SD-scalarh1} for $\uu_h$ and 
$\rrho_{T,h}$ can be solved in a pointwise fashion per integration point.
Moreover, positivity of the (space-time) density $\rrho_h$ and the terminal density 
$\rrho_{T,h}$ is guaranteed on all quadrature points by design as the admissible set requires
their coefficients to be non-negative: $\uprho_{ij}, \uprho_{T,i}\ge0$.
The scheme \eqref{SD-scalarh1} does not prescribe the choice of basis functions for the $H^1$-conforming space $V_h^k$. The results are independent of the specific choice of bases for $V_h^k$. We use a nodal Gauss-Lobatto base in our implementation.  

The scheme \eqref{SD-scalarh1} is our fully discrete high-order finite element discretization to the scalar constraint optimization problem \eqref{model-scalar}.
Next, we present its optimization solver using the ALG2 algorithm \cite{FortinBook}, which is also referred to as the alternating direction method of multipliers (ADMM) method \cite{ADMM}.

\subsubsection{Duality and augmented Lagrangian}
We introduce the dual variables 
\[
\uu_h^*=(\rrho_h^*, \mm_h^*, \so_h^*, \nn_h^*)\in [W_h^{k-1}]^{2d+2}, \text{ and }
\rho_{T,h}^*
=\sum_{i=1}^{k^dN_S}\uprho_{T,i}^*\varphi_i(x)
\in M_h^{k-1}. 
\]
To further simplify the notation, we denote 
\[
\uu_h=\;\sum_{j=1}^{kN_T}
\sum_{i=1}^{k^dN_S}{\bm{\mathrm{u}}}_{ij}\,\psi_j(t)\varphi_i(x),\text{ and }
\uu_h^*=\;\sum_{j=1}^{kN_T}
\sum_{i=1}^{k^dN_S}{\bm{\mathrm{u}}}_{ij}^*\,\psi_j(t)\varphi_i(x),
\]
where 
\[
\bm{\mathrm{u}}_{ij}:=
(\uprho_{ij}, \bm{\mathrm{m}}_{ij},\mathrm{s}_{ij}, \bm{\mathrm{n}}_{ij})
\in \mathbb{R}^{2d+2}, \text{ with }\uprho_{ij}\ge 0, 
\]
and 
\[
\bm{\mathrm{u}}_{ij}^*:=
(\uprho_{ij}^*, \bm{\mathrm{m}}_{ij}^*,\mathrm{s}_{ij}^*, \bm{\mathrm{n}}_{ij}^*)
\in \mathbb{R}^{2d+2}.
\]
Introducing the following Legendre transforms:
\begin{subequations}
    \label{duals}
\begin{align}
\HH^*(\uui_{ij}^*):=&\; \sup_{\uui_{ij}\in \mathbb{R}^{2d+2}, 
\uprho_{ij}\ge 0} \uui_{ij}\cdot\uui_{ij}^*-
\HH(\uui_{ij}),\\
\GG^*(\rti^*):=&\; \sup_{\rti\in \mathbb{R}_+} \rti\cdot\rti^*-
\GG(\rti),
\end{align}
by duality, there holds
\begin{align}
\label{dual-H}
\HH(\uui_{ij}):=&\; \sup_{\uui_{ij}^*\in \mathbb{R}^{2d+2}} \uui_{ij}\cdot\uui_{ij}^*-
\HH^*(\uui_{ij}^*),\\
\label{dual-G}
\GG(\rti):=&\; \sup_{\rti^*\in \mathbb{R}} \rti\cdot\rti^*-
\GG^*(\rti^*).
\end{align}
\end{subequations}
Plugging the relations \eqref{dual-H} and \eqref{dual-G} back to the scheme \eqref{SD-scalarh1}, we have the following dual formulation of \eqref{SD-scalarh1}: 
Find the critical point of
\begin{equation}\label{SD-scalarh2}
\begin{split}
\sup_{\uu_h, \,\rrho_{T,h}} \inf_{\Pphi_h,\uu_h^*, \,\rrho_{T,h}^*}~& \stint{\HH^*(\uu_h)
+\uu_h\cdot(\mathcal{D}(\Pphi_h)-\uu_h^*)}
\\
 & \hspace{-0.2cm}+
 \sint{\GG^*(\rrho_{T,h}^*)-\rrho_{T,h}\, (\pphi_h(T,\cdot)+\rrho_{T,h}^*)
+\rrho^0\,\pphi_h(0,\cdot)}.
\end{split}
\end{equation} 
It is clear that in the above formulation \eqref{SD-scalarh2}, 
the variable $\uu_h$ is the Lagrange multiplier for the constraint $\mathcal{D}(\Pphi_h)-\uu_h^*=0$, while the variable $\rrho_{T,h}$ is the Lagrange multiplier for the terminal constraint $\pphi_h(T,x)+\rho_{T,h}^*(x) = 0$.

Hence, the critical point of the system \eqref{SD-scalarh2} is equivalent to the 
following augmented Lagrangian system 
\begin{equation}\label{SD-scalarh3}
\begin{split}
\sup_{\uu_h, \,\rrho_{T,h}} \inf_{\Pphi_h,\uu_h^*, \,\rrho_{T,h}^*}~& \stint{\HH^*(\uu_h^*)
+\uu_h\cdot(\mathcal{D}(\Pphi_h)-\uu_h^*)}
\\
 & \hspace{-0.2cm}+
 \sint{\GG^*(\rrho_{T,h}^*)-\rrho_{T,h}\, (\pphi_h(T,\cdot)+\rrho_{T,h}^*)
+\rrho^0\,\pphi_h(0,\cdot)}\\
&\hspace{-0.2cm}+
\frac{r}{2}\stint{(\mathcal{D}(\Pphi_h)-\uu_h^*)\cdot(\mathcal{D}(\Pphi_h)-\uu_h^*)}\\
&\hspace{-0.2cm}+
\frac{r}{2}\sint{(\pphi_h(T,\cdot)+\rrho_{T,h}^*)^2},
\end{split}
\end{equation} 
where $r>0$ is the augmented Lagrangian parameter.

\subsubsection{The ALG2 algorithm and its efficient (modified) implementation}
The ALG2 solves the optimization problem \eqref{SD-scalarh3} in a splitting fashion. One update from iteration level $\ell-1$ to $\ell$ contains three steps:
\begin{itemize}
\item Step A: Given data $\uu_h^{\ell-1},\rrho_{T,h}^{\ell-1},\uu_h^{*,\ell-1},\rrho_{T,h}^{*,\ell-1}$, 
compute $\Phi_h^{\ell}$ by optimizing the target functional in \eqref{SD-scalarh3}
with respect to $\Phi_h$.
\item Step B: Given data
$\uu_h^{\ell-1},\rrho_{T,h}^{\ell-1},\Pphi_h^{\ell}$, 
compute $\uu_h^{*,\ell}$ and $\rrho_{T,h}^{*,\ell}$ by optimizing the target functional in \eqref{SD-scalarh3} with respect to $\uu_h^*$ and $\rrho_{T,h}^*$.
\item Step C: Update the Lagrange multipliers 
$\uu_h^{\ell},\rrho_{T,h}^{\ell}$ by the following explicit expressions:
\begin{subequations}
    \label{sd-lag}
\begin{align}
\label{sd-lag1}
\uu_h^{\ell}=&\;\uu_h^{\ell-1}+r(\mathcal{D}(\Pphi_h^{\ell})-\uu_h^{*,\ell}),\\
\label{sd-lag2}
\rrho_{T,h}^{\ell}=&\;\rrho_{T,h}^{\ell-1}-r(\pphi_h^{\ell}(T,\cdot)+\rrho_{T,h}^{*,\ell}).
\end{align}
\end{subequations}
\end{itemize}
Here Step A contains a global constant-coefficient linear system solver for the variables $\Pphi_h=(\pphi_h, \ssigma_h)$, 
Step B is a pointwise nonlinear equation solved per quadrature point that involves the dual functions $\HH^*$ and $\GG^*$, and Step C is a simple pointwise update.
In practical implementation, we apply two further simplifications to the above ALG2 algorithm: (1) apply splitting to solve $\pphi_h$ and $\ssigma_h$ sequentially; (2) use duality to solve pointwise nonlinear problems for $\uu_h$ and $\rrho_{T,h}$, and then apply a simple pointwise update  for the dual variables $\uu_h^*$ and $\rrho_{T,h}^*$.
 We note that the second modification avoids the 
explicit computation of the dual functions, which has already been used in our previous work
 on ALG2 for variational time implicit schemes for reaction-diffusion systems \cite{FuOsherLi}.
One iteration of the modified ALG2 algorithm is documented in Algorithm
\ref{alg:1} below. 
We skip the detailed derivation of each step. Since the whole algorithm is very similar to  Algorithm 2 in \cite{FuOsherLi} with simple modifications.

\begin{algorithm}
\caption{One iteration of (modified) ALG2  for \eqref{SD-scalarh3}.}
\label{alg:1}
\begin{algorithmic}[1]
\STATE Step A1.
Find $\pphi_h^{\ell}\in V_h^k$ 
such that, for all $\psi_h\in V_h^k$,
\begin{align*}
&    \stint{\partial_t\pphi_h^\ell\cdot\partial_t\psi_h+
\nabla\pphi_h^\ell\cdot\nabla\psi_h+\pphi_h^\ell\cdot\psi_h}
+\sint{\pphi_h^\ell(T,\cdot)\psi_h(T,\cdot)}\\
&=
\stint{(\rrho_h^{*,\ell-1}-\frac{\rrho_h^{\ell-1}}{r}-\beta\nabla\cdot\ssigma_h^{\ell-1})\partial_t\psi_h+
(\mm_h^{*,\ell-1}-\frac{\mm_h^{\ell-1}}{r})\cdot\nabla{\psi}_h
}\\
&\;+\stint{
(\so_h^{*,\ell-1}-\frac{\so_h^{\ell-1}}{r})\cdot{\psi}_h
}
-\sint{(\rrho_{T,h}^{*,\ell-1}-\frac{\rrho_{T,h}^{\ell-1}}{r})\cdot\psi_h(T,\cdot)}
-\sint{\frac{\rrho^0}{r}\cdot\psi_h(0,\cdot)}.
\end{align*}
\vspace{-.5cm}
\STATE  Step A2. 
Find $\ssigma_h^{\ell}\in [V_h^k]^d$ with $\ssigma_h^\ell\cdot\nnu|_{\partial\Omega}=0$ 
such that 
\begin{align*}
&    \stint{\beta^2(\nabla\cdot\ssigma_h^\ell)(\nabla\cdot\ttau_h)
+\ssigma_h^\ell\cdot\ttau_h}
\\
&=
\stint{\beta(\rrho_h^{*,\ell-1}-\frac{\rrho_h^{\ell-1}}{r}-\partial_t\pphi_h^{\ell})\nabla\cdot\ttau_h+
(\nn_h^{*,\ell-1}-\frac{\nn_h^{\ell-1}}{r})\cdot\ttau_h
}
\end{align*}
for all $\ttau_h\in [V_h^k]^d$ with $\ttau_h\cdot\nnu|_{\partial\Omega}=0$.
\STATE Step A3. Evaluate $\mathcal{D}(\Pphi_h^\ell)$ on each space-time quadrature point $(\zeta_j, \bm\xi_i)$, and $\pphi_h^\ell(T,\cdot)$ on each spatial quadrature point $\bm\xi_i$.
Denote these values as 
\begin{align*}
\mathcal{D}\Pphi_{ij}^\ell:=(\mathrm{d}\uprho_{ij}^\ell, 
\bm{\mathrm{dm}}_{ij}^\ell, 
\mathrm{ds}_{ij}^\ell, 
\bm{\mathrm{dn}}_{ij}^\ell),\quad 
\mathrm{d}\uprho_{T,i}^\ell=\pphi_h^\ell(T,\bm\xi_i).
\end{align*}
Set
$\overline{\uui}_{ij}^\ell=(\overline{\uprho}_{ij}^\ell,
\overline{\bm{\mathrm{m}}}_{ij}^\ell,
\overline{{\mathrm{s}}}_{ij}^\ell,
\overline{\bm{\mathrm{n}}}_{ij}^\ell):=
\mathcal{D}\Phi_{ij}^\ell+\uui_{ij}^{\ell-1}/r, 
$
and 
$\overline{\uprho}_{T,i}^\ell:=
\rti^{\ell-1}/r-\mathrm{d}\uprho_{T,i}^\ell.
$

\STATE Step B1. For each quadrature point $(\zeta_j,\bm\xi_i)$, find the nonnegative 
density coefficient $\uprho_{ij}^\ell\in \mathbb{R}_+$ such that it minimizes the function \begin{align*}
L_{ij}(\uprho):=
\frac{r^2|\overline{\bm{\mathrm{m}}}_{ij}^\ell|^2}{r+V_1(\uprho)}
+\frac{r^2|\overline{{\mathrm{s}}}_{ij}^\ell|^2}{r+V_2(\uprho)}
+\frac{r^2|\overline{\bm{\mathrm{n}}}_{ij}^\ell|^2}{r+V_3(\uprho)}
+\frac{(\uprho-r\overline{\uprho}_{ij}^\ell)^2}{r}
-2\FF(\uprho),
\end{align*}
and find the nonnegative terminal density coefficient 
$\uprho_{T,i}^\ell\in \mathbb{R}_+$ such that it minimizes the function 
$
L_{T,i}(\uprho):=
2\GG(\uprho)+{(\uprho-r\overline{\uprho}_{T,i}^\ell)^2}/r.
$
\STATE Step B2. Update the following coefficients:
\begin{align*}
\bm{\mathrm{m}}_{ij}^\ell=\;\frac{rV_1(\uprho_{ij}^\ell)}{r+V_1(\uprho_{ij}^\ell)}
\overline{\bm{\mathrm{m}}}_{ij}^\ell,\;\;
{\mathrm{s}}_{ij}^\ell=\;\frac{rV_2(\uprho_{ij}^\ell)}{r+V_2(\uprho_{ij}^\ell)}
\overline{{\mathrm{s}}}_{ij}^\ell,\;\;
\bm{\mathrm{n}}_{ij}^\ell=\;\frac{rV_3(\uprho_{ij}^\ell)}{r+V_3(\uprho_{ij}^\ell)}
\overline{\bm{\mathrm{n}}}_{ij}^\ell.
\end{align*}

\STATE Step C. update the dual variables according to the following:
\begin{align*}
\uu_{ij}^{*,\ell} = \overline{\uu}_{ij}^{\ell}-\uu_{ij}^{\ell}/r, \quad\uprho_{T,i}^{*,\ell} =
\overline{\rrho}_{T,i}^{\ell}-\rrho_{T,i}^{\ell}/r.
\end{align*}
\end{algorithmic}
\end{algorithm}

\begin{remark}[Computational complexity for Algorithm \ref{alg:1}]
Step A1 involves solving a symmetric positive definite linear system
for a constant-coefficient reaction-diffusion equation, for which we use preconditioned conjugate gradient (PCG) method with a geometric multigrid preconditioner. It achieves a linear computational complexity $\mathcal{O}(N)$ with $N=\dim V_h^k$ being the total number of degrees of freedom.
Step A2 is an $H(div)$-elliptic linear system problem.
This system is well-conditioned for small parameter $\beta\ll 1$, which is usually 
the case in our applications. We use the PCG method with a Jacobi preconditioner, which achieves optimal linear computational complexity $\mathcal{O}(dN)$.
Step B and C involve pointwise updates, which again have a linear complexity of $\mathcal{O}(N)$.
Hence the overall computational complexity of applying one iteration of Algorithm \ref{alg:1}
is linear. 
It is also clear that Step B1 guarantees the positivity of density on all quadrature points.
\end{remark}

\begin{remark}[On unique solvability for Step B1]
Unique solvability of the one-dimensional minimization problem in Step B1 of Algorithm \ref{alg:1} can be guaranteed when the functions $L_{ij}$ and $L_{T,i}$
are strongly convex. 
Strong convexity is ensured if the mobility functions $V_1, V_2$, and $V_3$ are positive and concave functions,  the potential function $F(\uprho)$ is concave, and 
the function $G(\uprho)$ is convex, which are satisfied by the choices in Table \ref{table:1}.
\end{remark}

\subsection{System case}\label{sec3:system}
In this subsection, we discretize the model \eqref{vmfc2A} with $M$ species and $R$ reactions.
We note that due to our formulation, there are no additional technical difficulties for the system derivation compared with the scalar case. We still document details of the finite element scheme and its splitting optimization solver for the completeness of this paper.

Again, we introduce the vectors $\nn_i:=\beta\nabla\rrho_i$ to avoid derivative evaluation of the densities for the scheme.
To simplify the notation, we
denote the collection of densities $\brrho=(\rrho_1,\cdots,\rrho_M)$, 
fluxes $\cmm=(\mm_1,\cdots,\mm_M)$, vectors $\bnn=(\nn_1,\cdots,\nn_M)$, and sources $\bso=(\so_1,\cdots, \so_R)$.
Then optimization problem \eqref{vmfc2A} is rewritten as follows:
\begin{subequations}\label{model-system}
\begin{equation}\label{MY1}
\begin{split}
\inf_{\brrho,\cmm, \bso, \bnn} ~& \int_0^T\int_\Omega \left[\sum_{i=1}^M\left(\frac{\|\mm_i\|^2}{2 V_{1,i}(\rrho_i)}
+\frac{|\nn_i|^2}{2V_{3,i}(\rrho_i)}\right)
+ \sum_{p=1}^R\frac{|\sop|^2}{2V_{2,p}(\brrho)}\right] \, dxdt \\
&-
\int_0^T\int_\Omega \bFF(\brrho)\,dxdt
+
\int_{\Omega}\bGG(\brrho(T,x))dx, 
\end{split}
\end{equation}
subject to constraints
\begin{align}
\label{MY2}
\partial_t \rrho_i + \nabla\cdot \mm_i-\sum_{p=1}^R\gamma_{i,p}\so_p &\;=0, \\
\label{MY3}
\nn_i - \beta \nabla \rrho_i &\;= 0,
\end{align}
\end{subequations}
for all $1\le i\le M$ on $[0,T]\times \Omega$,
with fixed initial densities $\rrho_i(0,x)=\rrho_{i}^0(x)$ in $\Omega$
and no-flux boundary conditions $\mm_i\cdot\nnu=0$ on $[0,T]\times \partial\Omega$.

Similar to the scalar case in \eqref{SD-scalar}, we reformulate the constrained optimization problem \eqref{model-scalar}
into the following saddle-point system:
Find the critical point of  
\begin{equation}\label{SY-scalar}
\begin{split}
\inf_{\buu, \brrho_T} \sup_{\bPphi}~& \int_0^T\int_\Omega\Big[\bHH(\buu)
-\buu\cdot\underline{\mathcal{D}}(\bPphi)\Big]
\,dxdt \\
 & \hspace{-0.2cm}+
 \int_\Omega \Big[\bGG(\brrho_T)+\brrho_T\cdot \bpphi(T,\cdot)
-\brrho^0\cdot\bpphi(0,\cdot)\Big]
 dx.
\end{split}
\end{equation} 
Here 
the variables 
\begin{align}
\label{sy1}
&\buu:=\;(\brrho, \cmm, \bso, \bnn)\in [L^2(\Omega_T)]^{(2d+1)M+R}, \\
&\brrho_T:=\;(\rrho_{1,T}, \cdots, \rrho_{M,T})\in [L^2(\Omega)]^{M},\\
&\Pphi:=\;(\bpphi, \bssigma)\in [H^1(\Omega_T)]^M\times 
\left[
L^2([0,T])\otimes H(\mathrm{div}_0;\Omega)
\right]^M,
\end{align}
with 
\[
\bpphi=(\pphi_1,\cdots,\pphi_M), \text{ and }
\bssigma=(\ssigma_1,\cdots,\ssigma_M),
\]
and initial data $\brrho^0:=(\rrho_{1}^0,\cdots, \rrho_{M}^0)$.
Here the nonlinear function
\[
\bHH(\buu):=
\sum_{i=1}^M\left(\frac{\|\mm_i\|^2}{2 V_{1,i}(\rrho_i)}+\frac{|\nn_i|^2}{2V_{3,i}(\rrho_i)}\right)
+ \sum_{p=1}^R\frac{|\so_p|^2}{2V_{2,p}(\brrho)}
-\bFF(\brrho),
\]
and the operator 
\[
\underline{\mathcal{D}}(\bPphi):=
(\partial_t\bpphi+\beta\nabla\cdot\bssigma, \nabla\bpphi, \Gamma^T\bpphi, \bssigma),
\]
where 
$
\nabla\cdot\bssigma:=(\nabla\cdot\ssigma_1,\cdots, \nabla\cdot\ssigma_M)
$
is the component-wise divergence and 
$
\nabla\bpphi:=(\nabla\pphi_1,\cdots, \nabla\pphi_M)
$
is the component-wise gradient, and 
$
\Gamma = (\gamma_{i,p})\in \mathbb{R}^{M\times R}
$
is the reaction coefficient matrix.
Note that the inner-product 
$\uu\cdot\mathcal{D}(\Pphi)$
has the following component-wise form
\[
\uu\cdot\mathcal{D}(\Pphi)=
\sum_{i=1}^M\left(\rrho_i (\partial_t \pphi_i+\beta\nabla\cdot\ssigma_i)
+\mm_i\cdot\nabla \pphi_i+\nn_i\cdot\ssigma_i\right)
+\sum_{p=1}^R\sum_{i=1}^M\so_p\gamma_{i,p}\pphi_i.
\]

Replacing the function spaces with appropriate (high-order) finite element spaces and applying numerical integration, 
we derive the following fully discrete scheme:
Find the critical point of the fully discrete system
\begin{equation}\label{SY-scalarh1}
\begin{split}
\inf_{\buu_h, \,\brrho_{T,h}} \sup_{\bPphi_h}~& \stint{\bHH(\buu_h)
-\buu_h\cdot\underline{\mathcal{D}}(\bPphi_h)}
\\
 & \hspace{-0.2cm}+
 \sint{\bGG(\brrho_{T,h})+\brrho_{T,h}\cdot \bpphi_h(T,\cdot)
-\brrho^0\cdot\bpphi_h(0,\cdot)}.
\end{split}
\end{equation} 
Here the unknowns $\brrho_{T,h}=\sum_{\ii=1}^{k^dN_S}{\bm\uprho}_{T,\ii}\varphi_\ii(x)$
with non-negative coefficient ${\bm\uprho}_{T,i}\in\mathbb{R}_+^M$,
$\buu_h:=(\brrho_h, \cmm_h, \bso_h, \bnn_h)$ with
\begin{alignat*}{2}
\brrho_h=&\;\sum_{\jj=1}^{kN_T}
\sum_{\ii=1}^{k^dN_S}{\bm\uprho}_{ij}\,\psi_j(t)\varphi_i(x),\quad
\cmm_h=&&\;\sum_{j=1}^{kN_T}
\sum_{i=1}^{k^dN_S}\underline{\bm{\mathrm{m}}}_{ij}\psi_j(t)\varphi_i(x),\\
\bso_h=&\;\sum_{j=1}^{kN_T}
\sum_{i=1}^{k^dN_S}{\bm{\mathrm{s}}}_{ij}\psi_j(t)\varphi_i(x),\quad 
\bnn_h=&&\;\sum_{j=1}^{kN_T}
\sum_{i=1}^{k^dN_S}\underline{\bm{\mathrm{n}}}_{ij}\psi_j(t)\varphi_i(x),
\end{alignat*}
where coefficients $\bm\uprho_{ij}\in \mathbb{R}_+^M$, $\underline{\bm{\mathrm{m}}}_{ij},\underline{\bm{\mathrm{n}}}_{ij}\in\mathbb{R}^{dM}$, and $\bm{\mathrm{s}}_{ij}\in\mathbb{R}^R$,
and $\bPphi_h:=(\bpphi_h,\bssigma_h)\in [V_h^k]^{(d+1)M}$
with $\bssigma_h\cdot\nnu|_{\partial\Omega}=0$.

We introduce the dual variables 
\[
\buu_h^*=(\brrho_h^*, \cmm_h^*, \bso_h^*, \bnn_h^*)\in [W_h^{k-1}]^{(2d+1)M+R}, 
\]
and
$\brrho_{T,h}^*
=\sum_{i=1}^{k^dN_S}\bm\uprho_{T,i}^*\varphi_i(x)
\in M_h^{k-1}$, and denote 
\[
\buu_h=\;\sum_{j=1}^{kN_T}
\sum_{i=1}^{k^dN_S}\underline{\bm{\mathrm{u}}}_{ij}\,\psi_j(t)\varphi_i(x),\text{ and }
\buu_h^*=\;\sum_{j=1}^{kN_T}
\sum_{i=1}^{k^dN_S}\underline{\bm{\mathrm{u}}}_{ij}^*\,\psi_j(t)\varphi_i(x),
\]
with coefficients 
\[
\underline{\bm{\mathrm{u}}}_{ij}:=
(\bm\uprho_{ij}, \underline{\bm{\mathrm{m}}}_{ij},\bm{\mathrm{s}}_{ij}, \underline{\bm{\mathrm{n}}}_{ij})
\in \mathbb{R}^{(2d+1)M+R}, \text{ with }\bm\uprho_{ij}\ge 0, 
\]
and 
\[
\underline{\bm{\mathrm{u}}}_{ij}^*:=
(\bm\uprho_{ij}^*, \underline{\bm{\mathrm{m}}}_{ij}^*,\bm{\mathrm{s}}_{ij}^*, \underline{\bm{\mathrm{n}}}_{ij}^*)
\in \mathbb{R}^{(2d+1)M+R}. 
\]
Using these dual variables,
we obtain the following augmented Lagrangian formulation of \eqref{SY-scalarh1}:
\begin{equation}\label{SY-scalarh3}
\begin{split}
\sup_{\buu_h, \,\brrho_{T,h}} \inf_{\bPphi_h,\buu_h^*, \,\brrho_{T,h}^*}~& \stint{\bHH^*(\buu_h^*)
+\buu_h\cdot(\underline{\mathcal{D}}(\bPphi_h)-\buu_h^*)}
\\
 & \hspace{-0.2cm}+
 \sint{\bGG^*(\brrho_{T,h}^*)-\brrho_{T,h}\cdot (\bpphi_h(T,\cdot)+\brrho_{T,h}^*)}\\
&\hspace{-0.2cm}+
\frac{r}{2}\stint{(\underline{\mathcal{D}}(\bPphi_h)-\buu_h^*)\cdot(\underline{\mathcal{D}}(\bPphi_h)-\buu_h^*)}\\
&\hspace{-0.2cm}+
\frac{r}{2}\sint{|\bpphi_h(T,\cdot)+\brrho_{T,h}^*|^2}
+\stint{\brrho^0\cdot\bpphi_h(0,\cdot)}
\end{split}
\end{equation} 

Finally, we introduce a splitting algorithm (modified ALG2) for 
the saddle-point system \eqref{SY-scalarh3}, where we sequentially compute each component of 
$\bPphi_h$ in the linear elliptic update (Step A) 
and of the densities $\brrho_h$ and $\brrho_{T,h}$ in the nonlinear update (Step B1); see also \cite{FuOsherLi}.

The coupled global elliptic linear system for $\bPphi_h=(\bpphi_h, \bssigma_h)$ in ALG2 takes the following form:
Find $(\bpphi_h, \bssigma_h)\in [V_h^{k}]^{(d+1)M}$ with 
$\bssigma_h\cdot\nnu|_{\partial\Omega}=0$,
such that, for all 
$(\bppsi_h, \bttau_h)\in [V_h^{k}]^{(d+1)M}$ with 
$\bttau_h\cdot\nnu|_{\partial\Omega}=0$, there exists
\begin{align}
\label{eqn-Phi}
\begin{split}
&\sum_{i=1}^M\stint{(\partial_t\pphi_{i,h}+\beta\nabla\cdot\ssigma_{i,h})
(\partial_t\psi_{i,h}+\beta\nabla\cdot\ttau_{i,h})}\\
&\quad+\stint{\nabla\pphi_{i,h}\cdot\nabla\psi_{i,h}+\ssigma_{i,h}\cdot\ttau_{i,h}}
+\sint{\pphi_{i,h}(T,\cdot)\psi_{i,h}(T,\cdot)}\\
&+\sum_{p=1}^R\stint{(\textstyle\sum_{i=1}^M\gamma_{i,p}\pphi_{i,h})(\sum_{i=1}^M\gamma_{i,p}\psi_{i,h})}
\\
=\;\;
&\sum_{i=1}^M\stint{(\rrho_{i,h}^{*}-\frac{\rrho_{i,h}}{r})
(\partial_t\psi_{i,h}+\beta\nabla\cdot\ttau_{i,h})}\\
&\quad+\stint{(\mm_{i,h}^*-\frac{\mm_{i,h}}{r})\cdot\nabla\psi_{i,h}
+(\nn_{i,h}^*-\frac{\nn_{i,h}}{r})\cdot\ttau_{i,h}}\\
&\quad
-\sint{(\rho_{i,T,h}^*-\frac{\rho_{i,T,h}}{r})\psi_{i,h}(T,\cdot)}
+\sint{(\rho_{i,0}^*-\frac{\rho_{i,0}}{r})\psi_{i,h}(0,\cdot)}\\
&+\sum_{p=1}^R\stint{((\so_{p,h}^*-\frac{\so_{p,h}}{r}))(\textstyle\sum_{i=1}^M\gamma_{i,p}\psi_{i,h})}.
\end{split}
\end{align}
This system is solved sequentially in practice to drive down the overall computational cost.
The $i$-th component scalar reaction-diffusion solver for $\pphi_{i,h}^\ell$ at $\ell$-th iteration reads as follows:
\begin{align}
\label{eqn-phi1}
\begin{split}
&\stint{\partial_t\pphi_{i,h}^\ell\partial_t\psi_{i,h}+\nabla\pphi_{i,h}^\ell\cdot\nabla\psi_{i,h}}
\\
&
+\sint{\pphi_{i,h}^\ell(T,\cdot)\psi_{i,h}(T,\cdot)}
+\sum_{p=1}^R\stint{\gamma_{i,p}^2\pphi_{i,h}^\ell\psi_{i,h}}
\\
=\;\;
&\stint{(\rrho_{i,h}^{*,\ell-1}-\frac{\rrho_{i,h}^{\ell-1}}{r}-\beta\nabla\cdot\ssigma_{i,h}^{\ell-1})
\partial_t\psi_{i,h}}\\
&
+\stint{(\mm_{i,h}^{*,\ell-1}-\frac{\mm_{i,h}^{\ell-1}}{r})\cdot\nabla\psi_{i,h}}
\\
&\quad
-\sint{(\rho_{i,T,h}^{*,\ell-1}-\frac{\rho_{i,T,h}^{\ell-1}}{r})\psi_{i,h}(T,\cdot)}
+\sint{(\rho_{i,0}^*-\frac{\rho_{i,0}}{r})\psi_{i,h}(0,\cdot)}\\
&+\sum_{p=1}^R\stint{(\so_{p,h}^*-\frac{\so_{p,h}}{r}-
\textstyle\sum_{j=1}^{i-1}\gamma_{j,p}\pphi_{j,h}^\ell
-\sum_{j=i+1}^{M}\gamma_{j,p}\pphi_{j,h}^{\ell-1}
)\gamma_{i,p}\psi_{i,h}}.
\end{split}
\end{align}
The $i$-th component $H(\mathrm{div})$-elliptic solver for $\ssigma_{i,h}^\ell$ reads as follows
\begin{align}
\label{eqn-sigma1}
\begin{split}
&\stint{\beta^2(\nabla\cdot\ssigma_{i,h}^\ell)
(\nabla\cdot\ttau_{i,h})
+\ssigma_{i,h}^\ell\cdot\ttau_{i,h}}\\
&\;=\;
\sum_{i=1}^M\stint{(\rrho_{i,h}^{*,\ell-1}-\frac{\rrho_{i,h}^{\ell-1}}{r}-\partial_t\pphi_{i,h}^\ell)
\beta\nabla\cdot\ttau_{i,h}
+(\nn_{i,h}^{*,\ell-1}-\frac{\nn_{i,h}^{\ell-1}}{r})\cdot\ttau_{i,h}}.
\end{split}
\end{align}

Meanwhile, the nonlinear update on each quadrature point for 
the densities $\bm{\uprho}_{ij}^\ell=({\uprho}_{1,ij}^\ell,\cdots, {\uprho}_{M,ij}^\ell)\in \mathbb{R}_+^M$ 
takes the following form:
find $\bm{\uprho}_{ij}^\ell\in\mathbb{R}_+^M$ that minimizes the function
\begin{align}
\label{eqn-rho1}
\begin{split}
L_{ij}(\bm\uprho):=&\;
\sum_{\mathsf{m}=1}^M\frac{r^2|\overline{\bm{\mathrm{m}}}_{\mathsf{m},ij}^\ell|^2}{r+V_{1,\mathsf{m}}(\uprho_{\mathsf{m}})}
+\frac{r^2|\overline{\bm{\mathrm{n}}}_{\mathsf{m},ij}^\ell|^2}{r+V_{3,\mathsf{m}}(\uprho_{\mathsf{m}})}
+\frac{(\uprho_{\mathsf{m}}-r\overline{\uprho}_{\mathsf{m},ij}^\ell)^2}{r}\\
&\;-2\bFF(\bm\uprho)
+\sum_{p=1}^R\frac{r^2|\overline{{\mathrm{s}}}_{p,ij}^\ell|^2}{r+V_{2,p}(\bm\uprho)}.
\end{split}
\end{align}
And the nonlinear update for the terminal densities 
$\bm{\uprho}_{T,i}^\ell=({\uprho}_{1,T,i}^\ell,\cdots, {\uprho}_{M,T,i}^\ell)\in \mathbb{R}_+^M$ 
takes the following form:
find $\bm{\uprho}_{T,i}^\ell\in\mathbb{R}_+^M$ that minimizes the function
\begin{align}
\label{eqn-rho2}
L_{T,i}(\bm\uprho):=&\;
2\bGG(\bm\uprho)+{|\bm\uprho-r\overline{\bm\uprho}_{T,i}^\ell|^2}/r.
\end{align}
Here the bar-values take the same form as in Step A3 of Algorithm~\ref{alg:1}, 
namely, 
\begin{align}
\label{barX}
\overline{\underline{\uui}}_{ij}^\ell:=(\overline{\bm\uprho}_{ij}^\ell,
\underline{\overline{\bm{\mathrm{m}}}}_{ij}^\ell,
\overline{\bm{\mathrm{s}}}_{ij}^\ell,
\overline{\underline{\bm{\mathrm{n}}}}_{ij}^\ell):=
\underline{\mathcal{D}}\bPphi_{ij}^\ell+\underline{\uui}_{ij}^{\ell}/r, 
\; 
\overline{\bm\uprho}_{T,i}^\ell:=
\bm{\uprho}_{T,i}^{\ell}/r-
\bm{\pphi}_h^\ell(T,\bm\xi_i).
\end{align}
The pointwise optimization problems in \eqref{eqn-phi1} and \eqref{eqn-rho2}
are $M$-dimensional problems. These problems are usually loosely coupled among the density components. We further reduce the computational cost by separately solving for each density component sequentially, which results in $M$ one-dimensional minimization problems per integration point. The positivity of the densities is guaranteed in our algorithm at the quadrature points.
After the densities $\bm{\uprho}_{ij}^\ell$ and $\bm{\uprho}_{T,i}^\ell$ have been computed, we then update the other physical variables on the quadrature point as follows:
\begin{subequations}
\label{ustarX}
\begin{alignat}{2}
\label{ustarX1}
\bm{\mathrm{m}}_{\mathsf{m},ij}^\ell=&\;\frac{rV_{1,\mathsf{m}}(\uprho_{\mathsf{m},ij}^\ell)}{r+V_{1,\mathsf{m}}(\uprho_{\mathsf{m},ij}^\ell)}
\overline{\bm{\mathrm{m}}}_{\mathsf{m},ij}^\ell,\;\;&&
\bm{\mathrm{n}}_{\mathsf{m},ij}^\ell=\;\frac{rV_{3,\mathsf{m}}(\uprho_{\mathsf{m},ij}^\ell)}{r+V_{3,\mathsf{m}}(\uprho_{\mathsf{m},ij}^\ell)}
\overline{\bm{\mathrm{n}}}_{\mathsf{m},ij}^\ell,\\
\label{ustarX2}
\bm{\mathrm{m}}_{\mathsf{m},ij}^{*,\ell}=&\;\frac{r}{r+V_{1,\mathsf{m}}(\uprho_{\mathsf{m},ij}^\ell)}
\overline{\bm{\mathrm{m}}}_{\mathsf{m},ij}^\ell,\;\;&&
\bm{\mathrm{n}}_{\mathsf{m},ij}^{*,\ell}=\;\frac{r}{r+V_{3,\mathsf{m}}(\uprho_{\mathsf{m},ij}^\ell)}
\overline{\bm{\mathrm{n}}}_{\mathsf{m},ij}^\ell,\\
\label{ustarX3}
{\bm\uprho}_{ij}^{*,\ell}=&\;\overline{\bm\uprho}_{ij}^{\ell}-{\bm\uprho}_{ij}^{\ell}/r,\;\;&&
{\bm\uprho}_{T,i}^{*,\ell}=\;\overline{\bm\uprho}_{T,i}^{\ell}-{\bm\uprho}_{T,i}^{\ell}/r,\\
\label{ustarX4}
\overline{{\mathrm{s}}}_{p,ij}^\ell=&\;\frac{rV_{2,p}(\bm\uprho_{ij}^\ell)}{r+V_{2,p}(\bm\uprho_{ij}^\ell)}
\overline{{\mathrm{s}}}_{p,ij}^\ell,\;\;&& 
{\mathrm{s}}_{p,ij}^{*,\ell}=\;\frac{r}{r+V_{2,p}(\bm\uprho_{ij}^\ell)}
\overline{{\mathrm{s}}}_{p,ij}^\ell.
\end{alignat}
\end{subequations}

For completeness, we collect the above procedures into the following algorithm.

\begin{algorithm}[H]
\caption{One iteration of (modified) ALG2  for \eqref{SY-scalarh3}.}
\label{alg:2}
\begin{algorithmic}[1]
\STATE Step A. For $i=1,\cdots, M$, solve the linear system \eqref{eqn-phi1} for the unknown $\pphi_{i,h}^\ell$, and solve the linear system \eqref{eqn-sigma1}
for the unknown $\ssigma_{i,h}^\ell$.
Interpolate the operator $\underline{\mathcal{D}}(\bPphi_h^\ell)$
on quadrature points, and compute the bar-values (on quadrature points) in \eqref{barX}.

\STATE Step B. For each quadrature point, sequentially compute the nonnegative density minimizers to \eqref{eqn-rho1} and \eqref{eqn-rho2}. 

\STATE Step C. Update the other variables on quadrature points according to \eqref{ustarX}.
\end{algorithmic}
\end{algorithm}

\section{Numerical results}\label{sec4}
In this section, we present one- and two-dimensional numerical results for the scalar scheme \eqref{SD-scalarh3}
and the system scheme \eqref{SY-scalarh3} using the finite element software MFEM \cite{MFEM}.
The spatial domain $\Omega=[0,1]^d$ is taken to be either a unit line segment ($d=1$) or a unit square ($d=2$). The terminal time is  $T=1$. 
Furthermore, we solve the mean field planning problem for all the numerical simulations, where the initial and terminal densities are prescribed. In this case, the terminal density is no longer an  unknown variable. Hence the scalar optimization problem \eqref{SD-scalarh3} simplifies as below
\begin{equation}\label{SD-scalarh3X}
\begin{split}
\sup_{\uu_h} \inf_{\Pphi_h,\uu_h^*}~& \stint{\HH^*(\uu_h^*)
+\uu_h\cdot(\mathcal{D}(\Pphi_h)-\uu_h^*)}
\\
 & \hspace{-0.2cm}+
 \sint{-\rrho^1\, \pphi_h(T,\cdot)
+\rrho^0\,\pphi_h(0,\cdot)}\\
&\hspace{-0.2cm}+
\frac{r}{2}\stint{(\mathcal{D}(\Pphi_h)-\uu_h^*)\cdot(\mathcal{D}(\Pphi_h)-\uu_h^*)},
\end{split}
\end{equation} 
and the system optimization problem \eqref{SY-scalarh3} simplifies as below
\begin{equation}\label{SY-scalarh3X}
\begin{split}
\sup_{\buu_h} \inf_{\bPphi_h,\buu_h^*}~& \stint{\bHH^*(\buu_h^*)
+\buu_h\cdot(\underline{\mathcal{D}}(\bPphi_h)-\buu_h^*)}
\\
 & \hspace{-0.2cm}+
 \sint{-\brrho^1\cdot \bpphi_h(T,\cdot)+\brrho^0\cdot\bpphi_h(0,\cdot)}\\
&\hspace{-0.2cm}+
\frac{r}{2}\stint{(\underline{\mathcal{D}}(\bPphi_h)-\buu_h^*)\cdot(\underline{\mathcal{D}}(\bPphi_h)-\buu_h^*)}.
\end{split}
\end{equation} 
We take the augmented Lagrangian parameter $r=1$ for all the simulations.

\subsection{Scalar MFC for reaction-diffusion: $\beta=0$}
\label{ex1}
We first consider a planning problem for  scalar MFC \eqref{smfc} with regularization parameter $\beta=0$ (no Fisher information functional). 
With $\beta=0$, we do not compute the $\nn_h, \nn_h^*$ and $\ssigma_h$ variables in the optimization problem \eqref{SD-scalarh3X}, since they are always {\it zero} and are decoupled from the other variables. 

We solve the problem \eqref{SD-scalarh3X} (with $\beta=0$) on 
 $\Omega_T=[0, 1]\times \Omega$ with $\Omega=[0,1]^d$ for $d=1,2$.
The initial (resp.\ terminal) densities are chosen to be: 
\begin{equation*}
\rho^0(x)=\exp(-50|x-x_A|^2),\quad \rho^1(x)=\exp(-50|x-x_B|^2), \forall x=(x_1,\cdots,x_d)\in\Omega.
\end{equation*}
Here $x_A=0.25$, $x_B=0.75$ for $d=1$, and  $x_A=(0.25, 0.25)$, $x_B=(0.75,0.75)$ for $d=2$. 
We fix $V_1(\rho)=\rho$ and potential function 
\begin{equation}
\label{ener1}
F(t,x,\rho) = -(0.01\rho\log(\rho)+0.4\rho\cos(4\pi t)\prod_{i=1}^d\cos(4\pi x_i)).
\end{equation}
Moreover, we take the following four choices of $V_2(\rho)$ to highlight different reaction effects of the model:
\begin{align}
\label{V2s}
 \begin{cases}
  \text{Case 1}: V_2(\rho)=0 \\[.2ex]
  \text{Case 2}: V_2(\rho)=20 \\[.2ex]
 \text{Case 3}: V_2(\rho)=20\rho \\[.2ex]
 \text{Case 4}: V_2(\rho)= 20\frac{\rho-1}{\log(\rho)}.
\end{cases}
\end{align}
The 1D results are computed on a $64\times 64$ uniform rectangular space-time mesh, and 
the 2D results are computed on a $16\times 64\times 64$ uniform cubic space-time mesh.
We use polynomial degree $k=4$, and apply 400 ALG iterations for all the simulations. 
The density contours for the $d=1$
on the space-time 2D domain $\Omega_T$ are shown in Figure~\ref{fig:den-ex1-2D}.
The snapshots of density contours on $\Omega=[0,1]^2$ at 
different times for the $d=2$ are shown in 
Figure~\ref{fig:den-ex1-3D}. We find the dynamics with and without reaction terms are completely different. The reaction function in Figure~\ref{fig:den-ex1-2D} (b) and (d) shows that the reaction mobility function $V_2$ dominates the path between $\rho^0$ and $\rho^1$. While the transport mobility function $V_1$ dominates the path between $\rho^0$ and $\rho^1$ in Figure~\ref{fig:den-ex1-2D} (a) and (c). In Figure~\ref{fig:den-ex1-3D}, we observe a different pattern formulation for various choices of general nonlinear reaction mobility function $V_2$. This behaves very differently from the classical optimal transport problem with $V_2=0$.

\begin{figure}[H]
\centering
\subfigure[$V_2(\rho) = 0$.]
{
\includegraphics[width=0.23\textwidth]{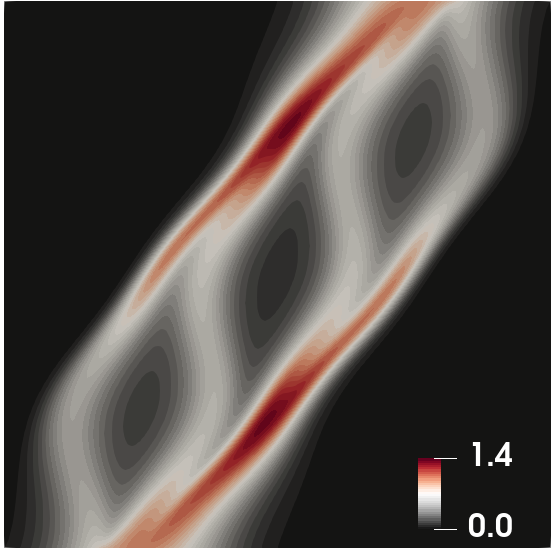}
}
\subfigure[$V_2(\rho) = 20$.]
{\includegraphics[width=0.23\textwidth]{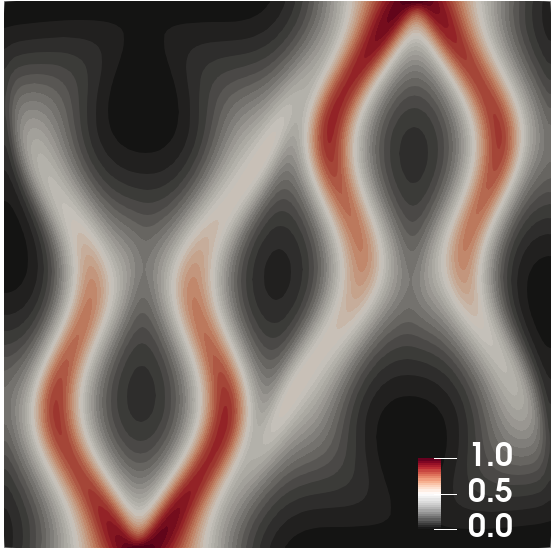}
}
\subfigure[$V_2(\rho) = 20\rho$.]
{\includegraphics[width=0.23\textwidth]{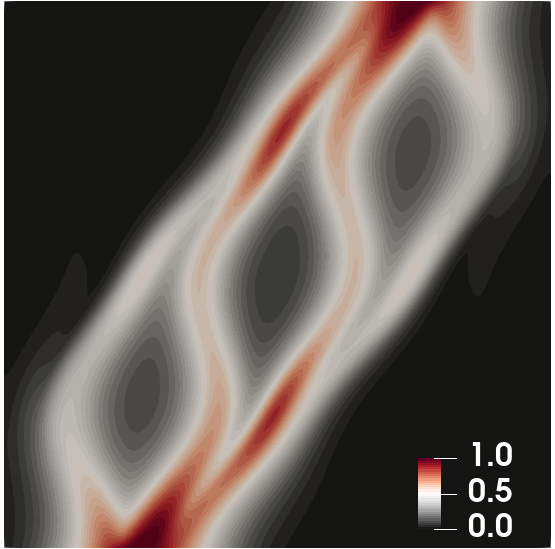}
}
\subfigure[$V_2(\rho) = 20\frac{\rho-1}{\log(\rho)}$.]
{\includegraphics[width=0.23\textwidth]{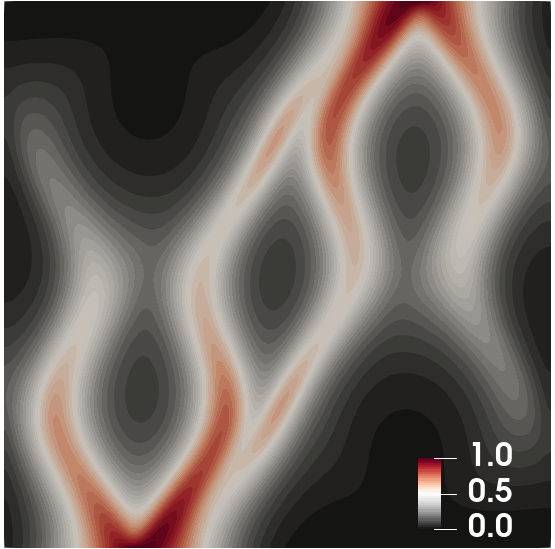}
}
\caption{Example \ref{ex1}. 
Snapshots of density contour on the space-time domain $\Omega_T=[0,1]^2$.
The vertical axis represents time.
}
\label{fig:den-ex1-2D}
\end{figure}

\begin{figure}[H]
\centering
\subfigure[$V_2(\rho) = 0$.]
{
\includegraphics[width=0.192\textwidth]{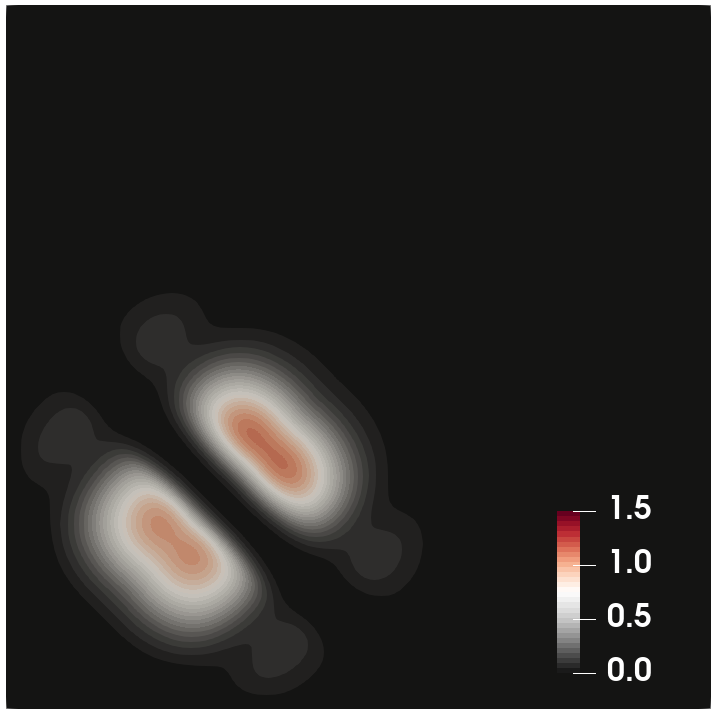}
\includegraphics[width=0.192\textwidth]{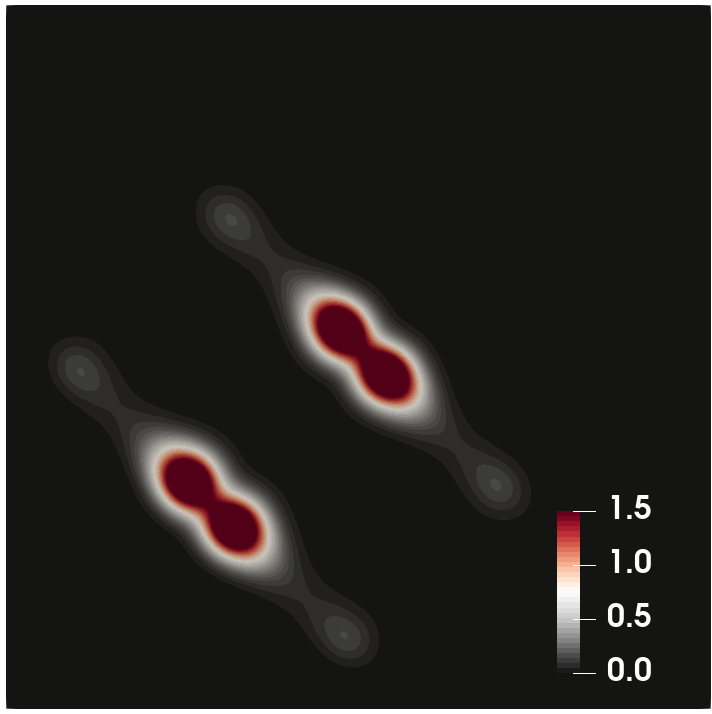}
\includegraphics[width=0.192\textwidth]{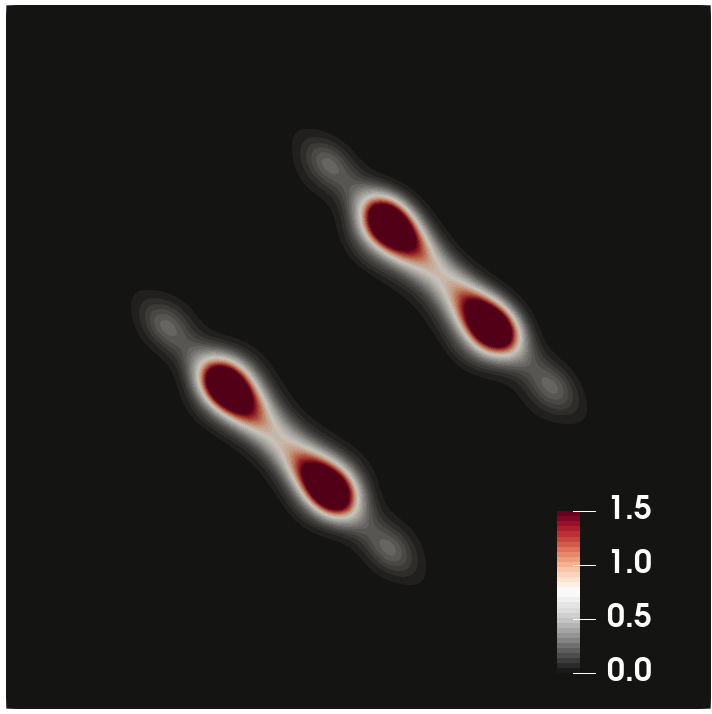}
\includegraphics[width=0.192\textwidth]{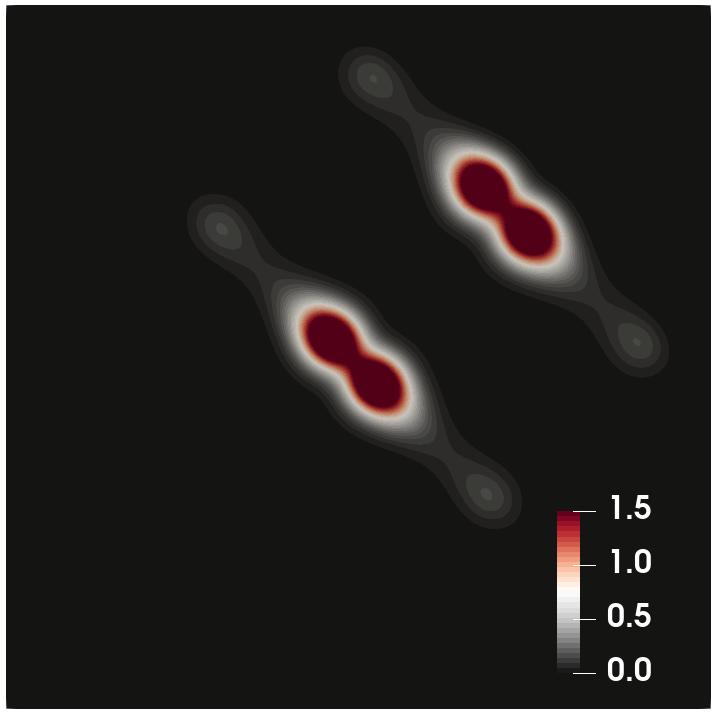}
\includegraphics[width=0.192\textwidth]{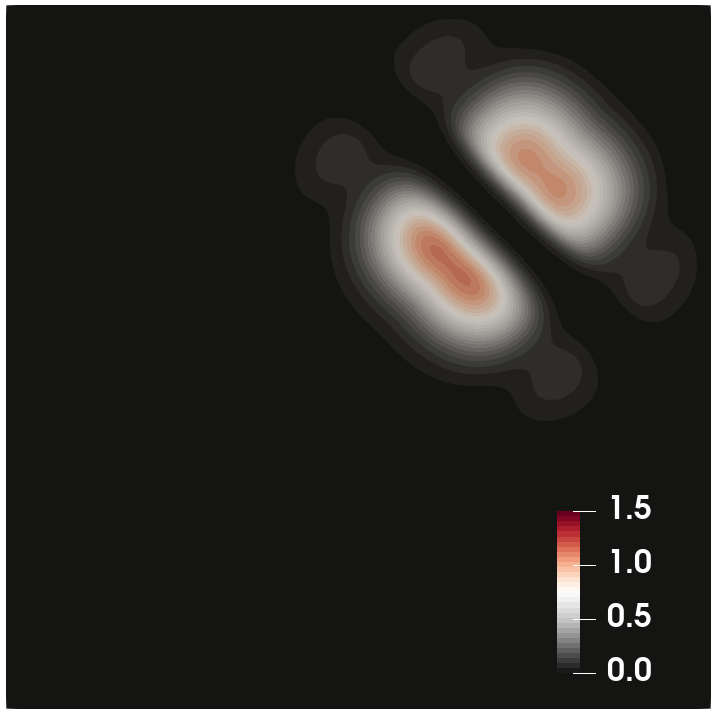}
}
\subfigure[$V_2(\rho) = 20$.]
{
\includegraphics[width=0.192\textwidth]{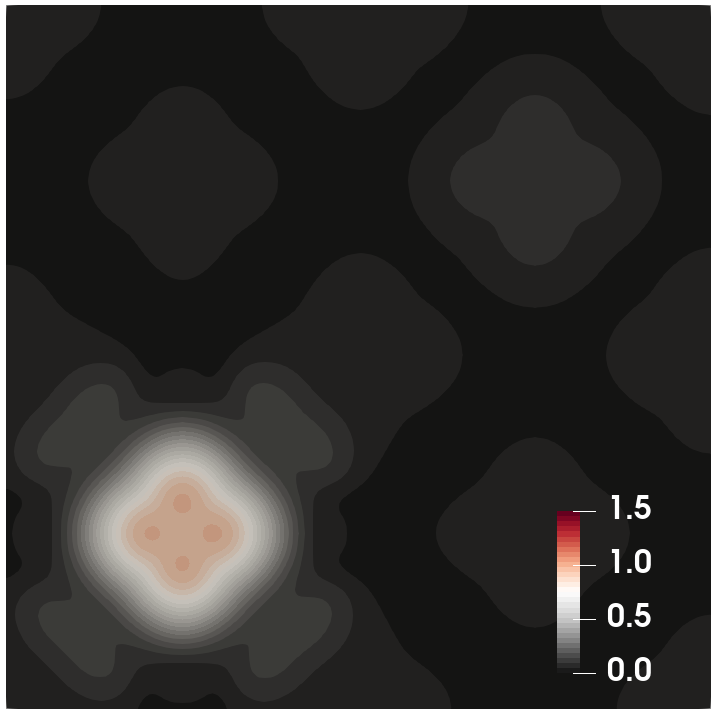}
\includegraphics[width=0.192\textwidth]{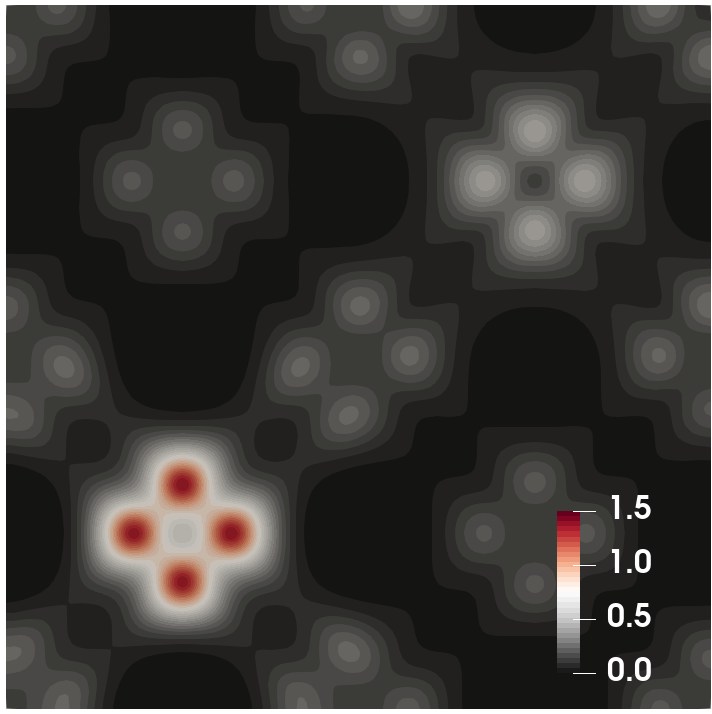}
\includegraphics[width=0.192\textwidth]{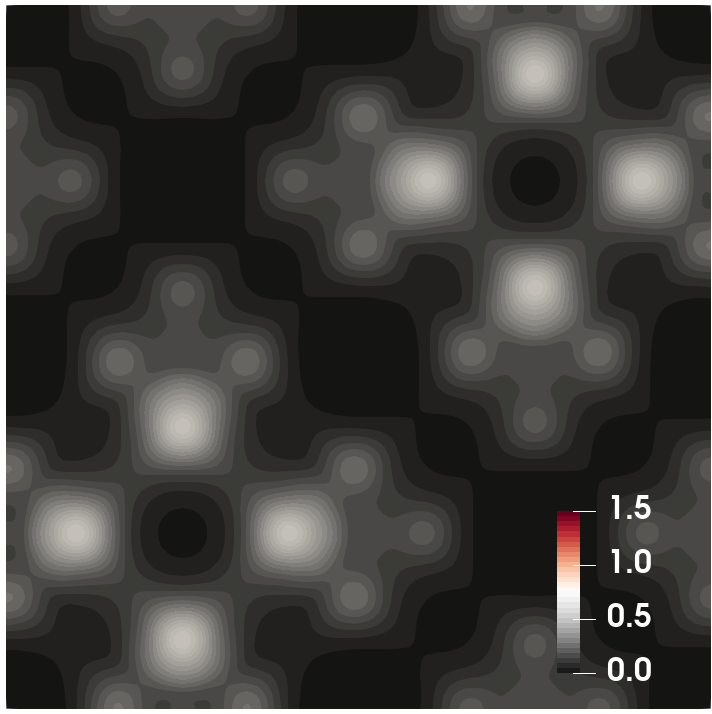}
\includegraphics[width=0.192\textwidth]{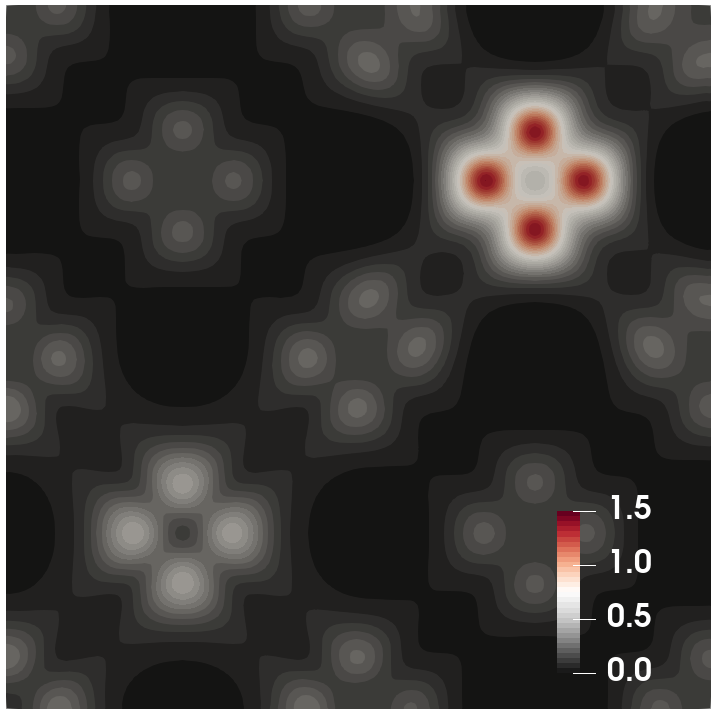}
\includegraphics[width=0.192\textwidth]{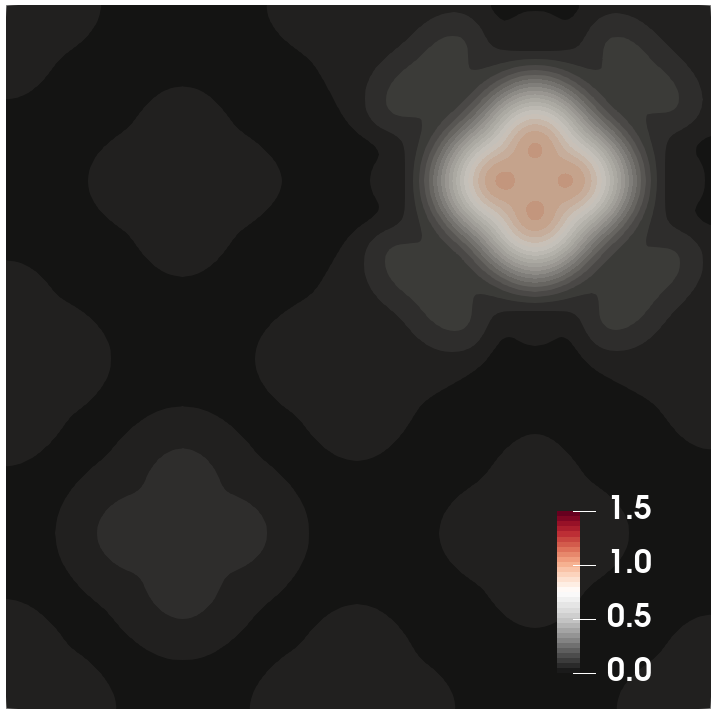}
}
\subfigure[$V_2(\rho) = 20\rho$.]
{
\includegraphics[width=0.192\textwidth]{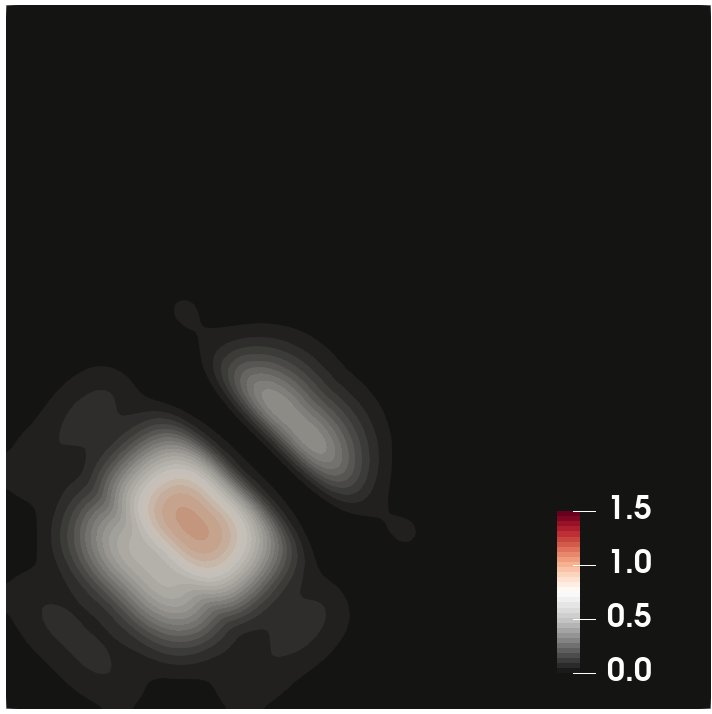}
\includegraphics[width=0.192\textwidth]{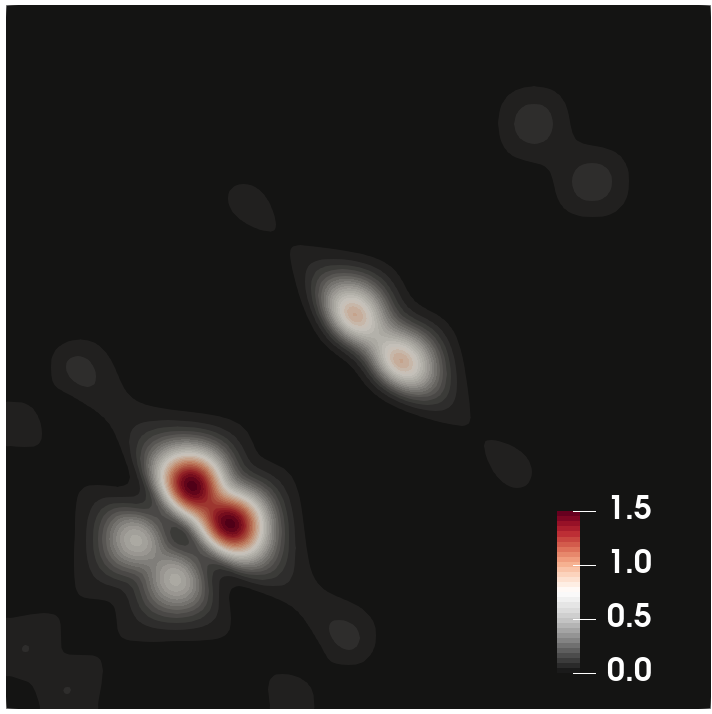}
\includegraphics[width=0.192\textwidth]{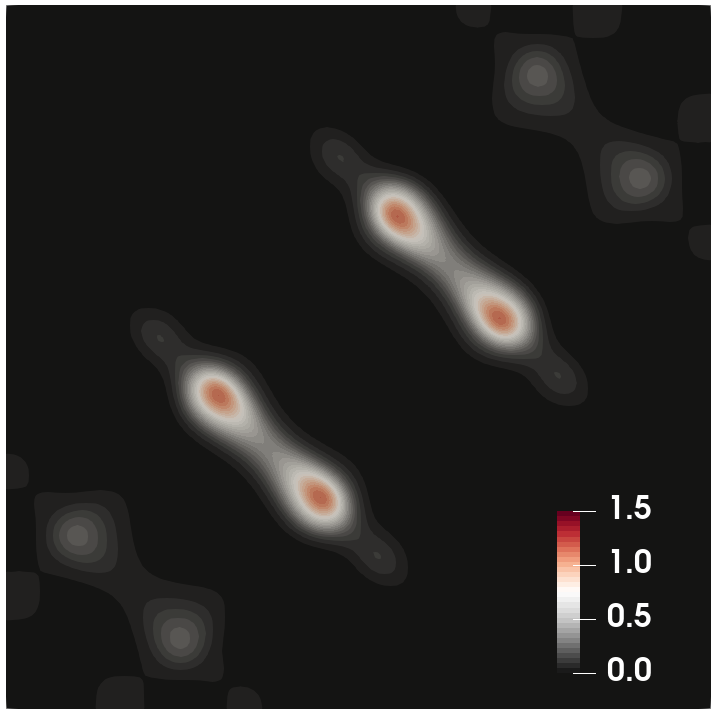}
\includegraphics[width=0.192\textwidth]{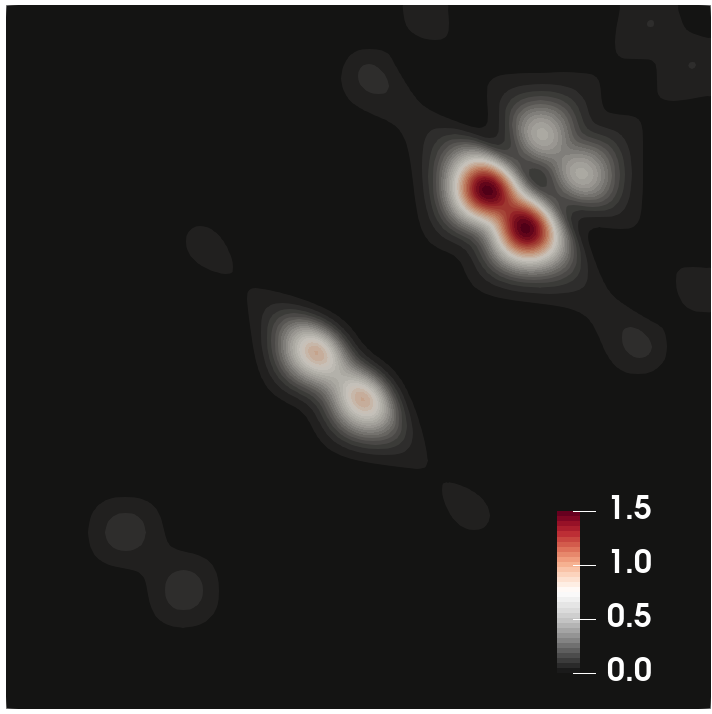}
\includegraphics[width=0.192\textwidth]{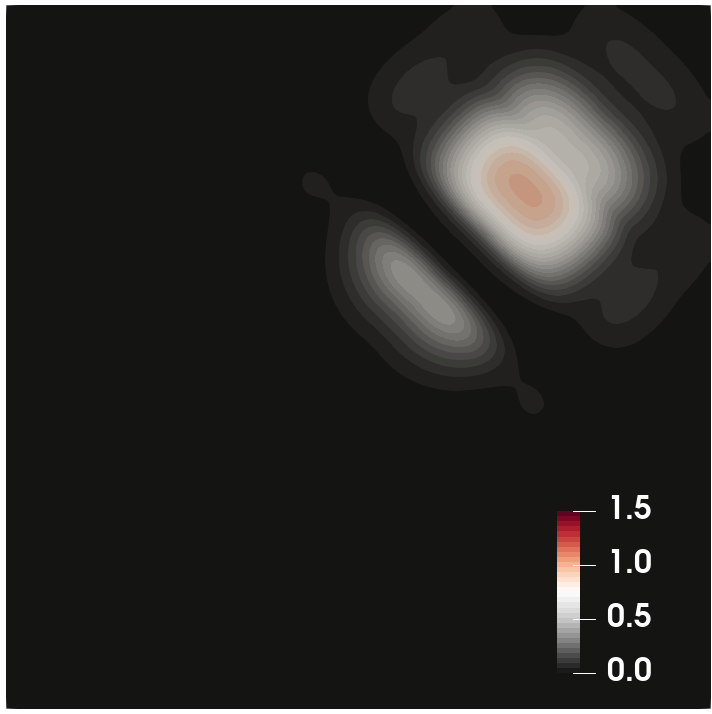}
}
\subfigure[$V_2(\rho) = 20\frac{\rho-1}{\log(\rho)}$.]
{
\includegraphics[width=0.192\textwidth]{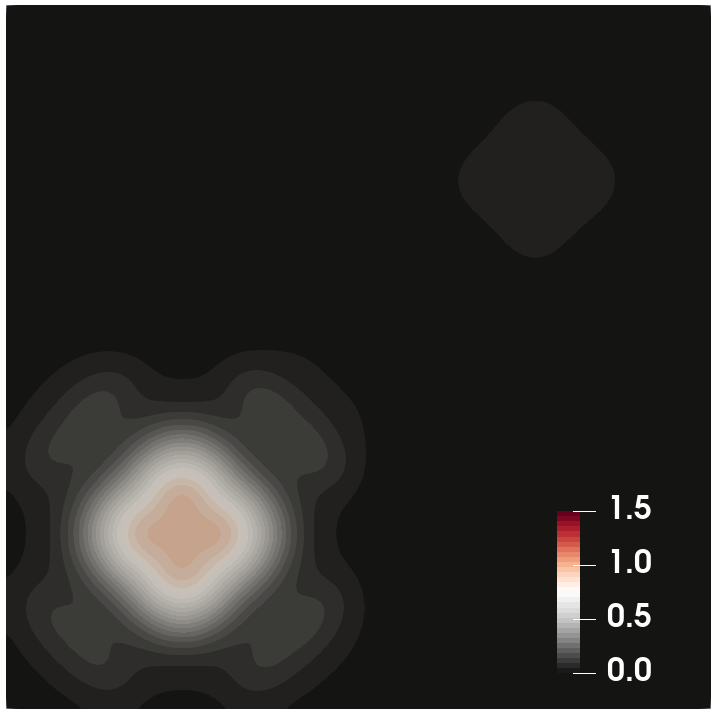}
\includegraphics[width=0.192\textwidth]{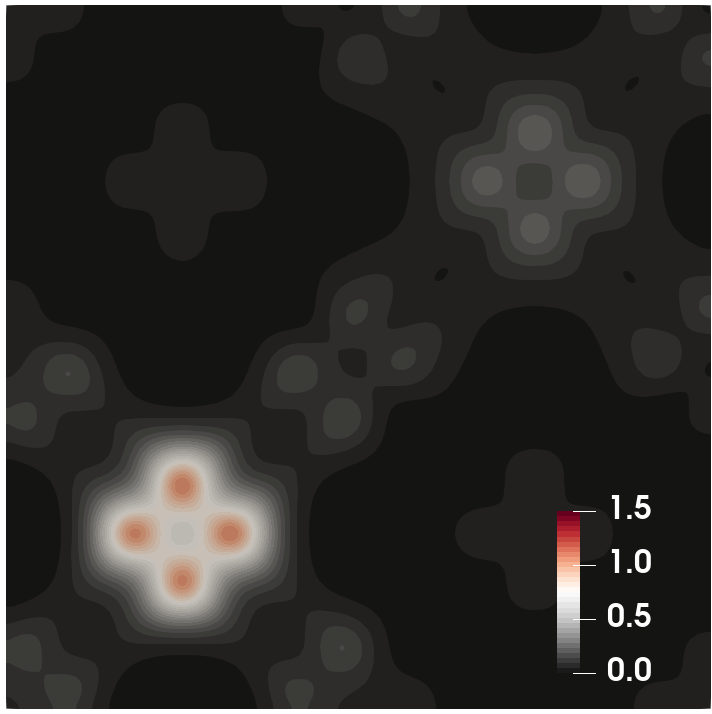}
\includegraphics[width=0.192\textwidth]{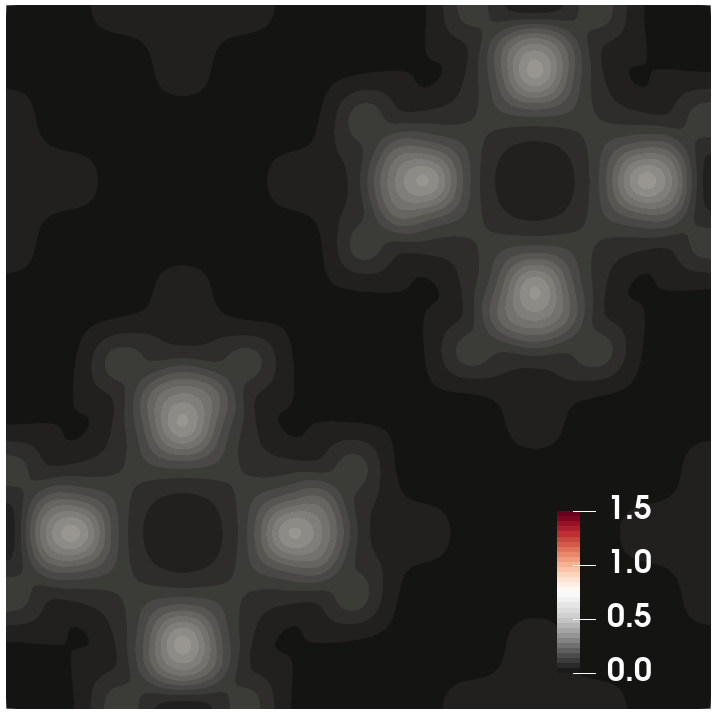}
\includegraphics[width=0.192\textwidth]{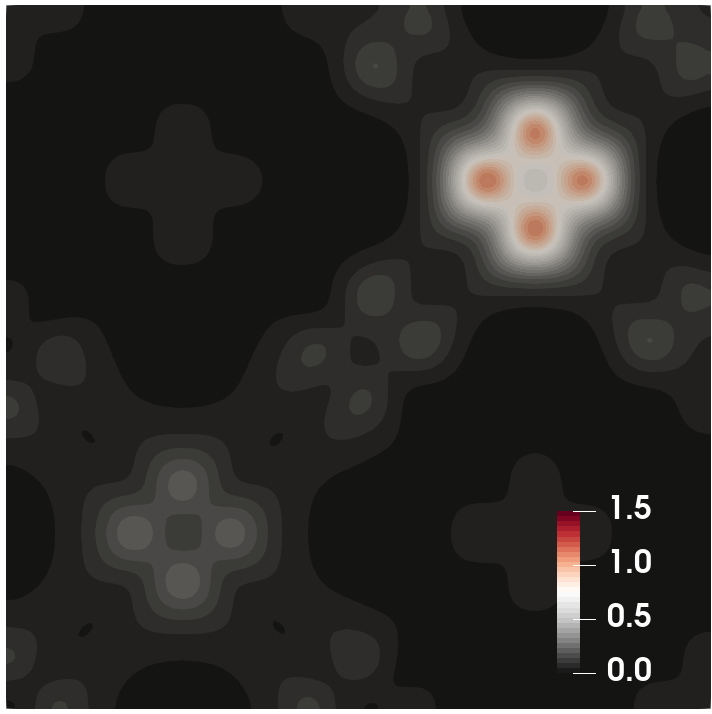}
\includegraphics[width=0.192\textwidth]{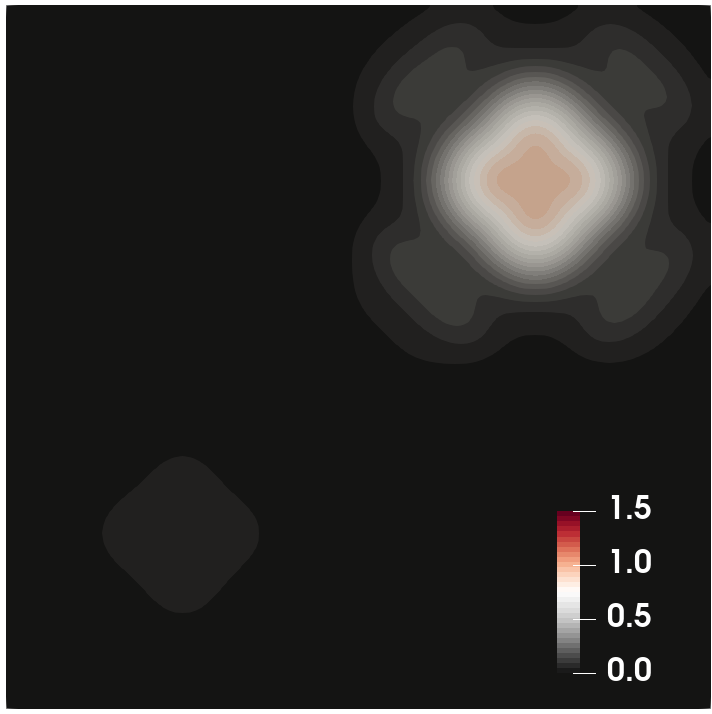}
}
\caption{Example \ref{ex1}. 
Snapshots of density contour at 
$t=$ 0.1,0.3,0.5, 0.7, 0.9 (left to right).
}
\label{fig:den-ex1-3D}
\end{figure}

\subsection{Scalar MFC for reaction-diffusion: effect of  $\beta$}
\label{ex2}
Here we use a similar setup as in Example \ref{ex1}, but study the effect of the regularization parameter $\beta$. 
We take take mobility functions $V_1(\rho) = \rho$, $V_2(\rho) = 20$, and potential function 
\begin{align}
\label{enerF1}
F(t,x,\rho) = -(0.005\rho\log(\rho)+0.4\rho\cos(4\pi t)\prod_{i=1}^d\cos(4\pi x_i)).
\end{align}
We further take the mobility function 
$V_3(\rho) = \rho$, and vary $\beta$, namely, 
$\beta=0$, 
$\beta=0.005$, and  
$\beta=0.01$.
The same discretization as in Example \ref{ex1} is used. 
The density contours for $d=1$
on the space-time domain $\Omega_T$ are shown in Figure~\ref{fig:den-ex2-2D}.
The snapshots of density contours at 
different times for $d=2$ are shown in 
Figure~\ref{fig:den-ex2-3D}. In numerical examples, we show that increasing regularization $\beta$ leads to a more diffusive density evolution. 

\begin{figure}[H]
\centering
\subfigure[$\beta=0$]{
\includegraphics[width=0.23\textwidth]{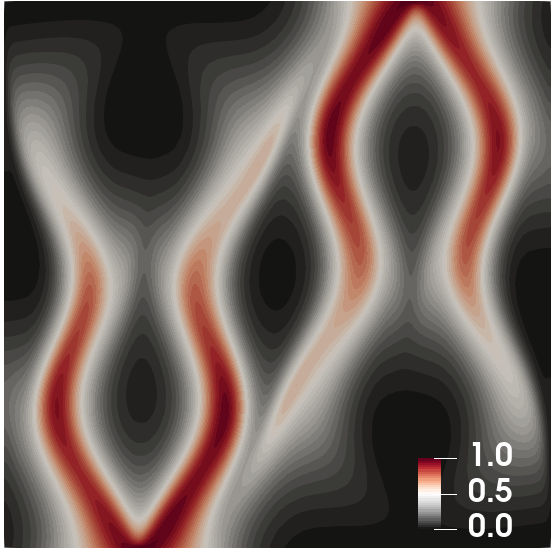}}
\subfigure[$\beta=0.005$]{
\includegraphics[width=0.23\textwidth]{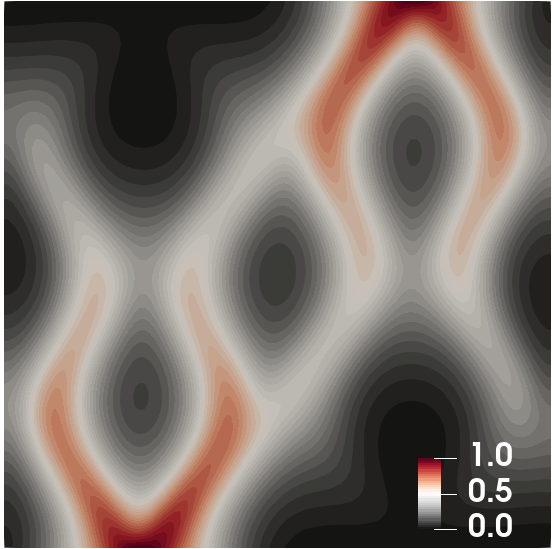}}
\subfigure[$\beta=0.01$]{
\includegraphics[width=0.23\textwidth]{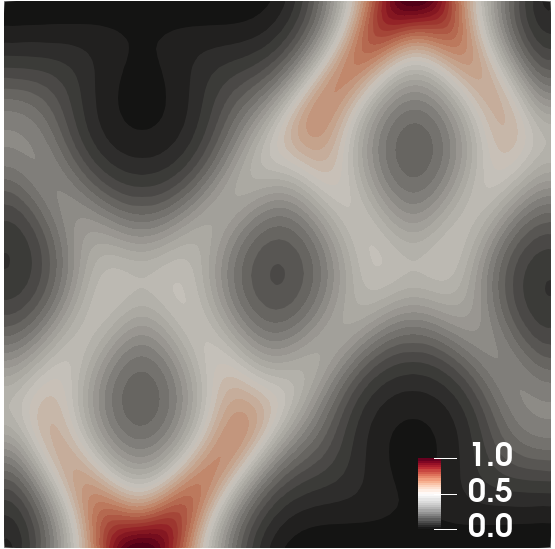}
}
\caption{Example \ref{ex2}. 
Snapshots of density contour on  $\Omega_T=[0,1]^2$.
}
\label{fig:den-ex2-2D}
\end{figure}

\begin{figure}[H]
\centering
\subfigure[$V_2(\rho) = 20, \beta=0$. (no regularization)]
{
\includegraphics[width=0.192\textwidth]{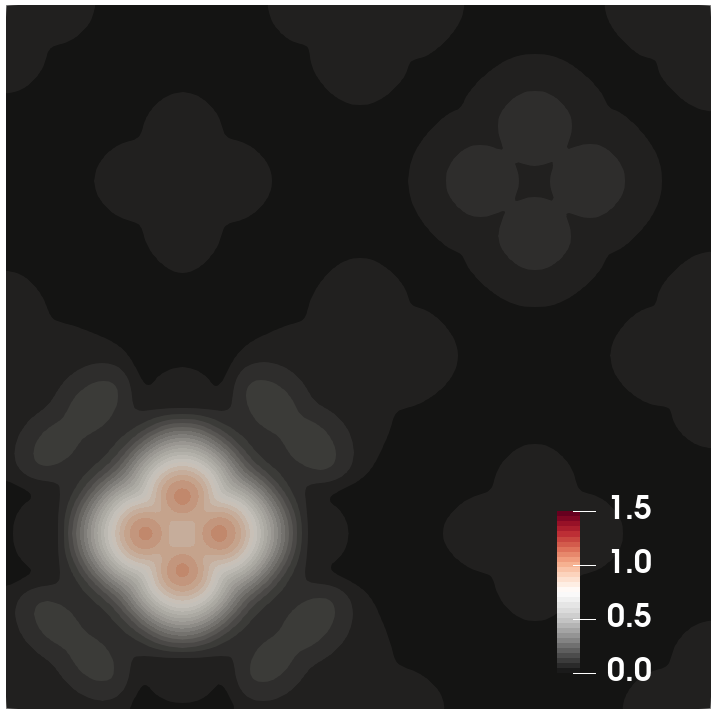}
\includegraphics[width=0.192\textwidth]{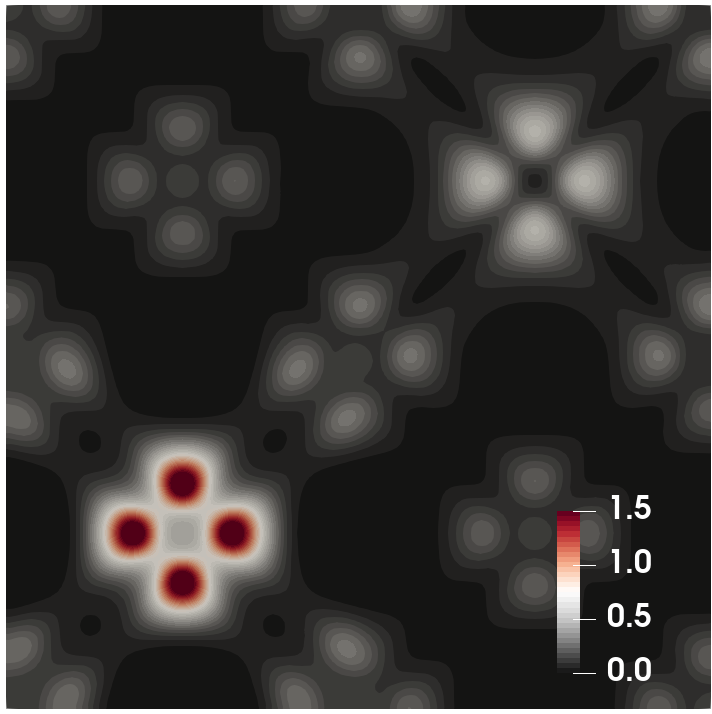}
\includegraphics[width=0.192\textwidth]{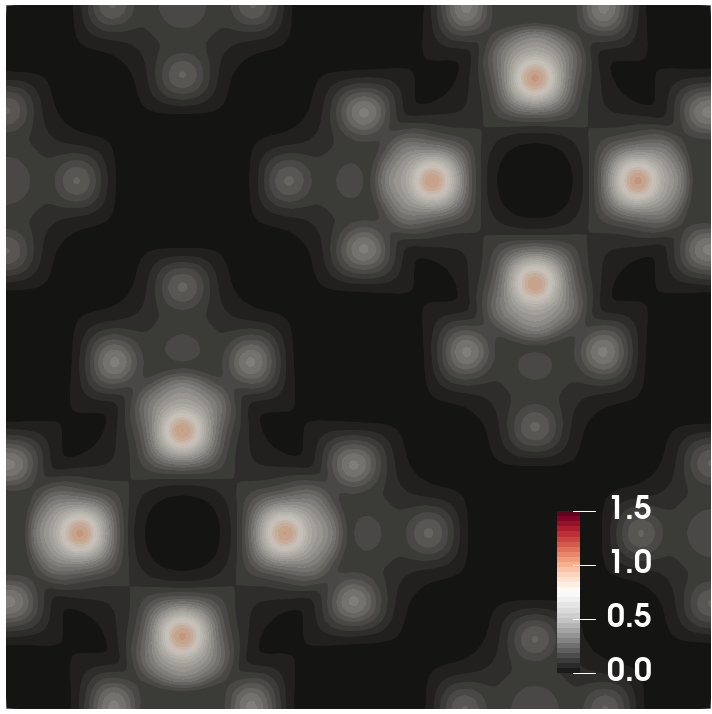}
\includegraphics[width=0.192\textwidth]{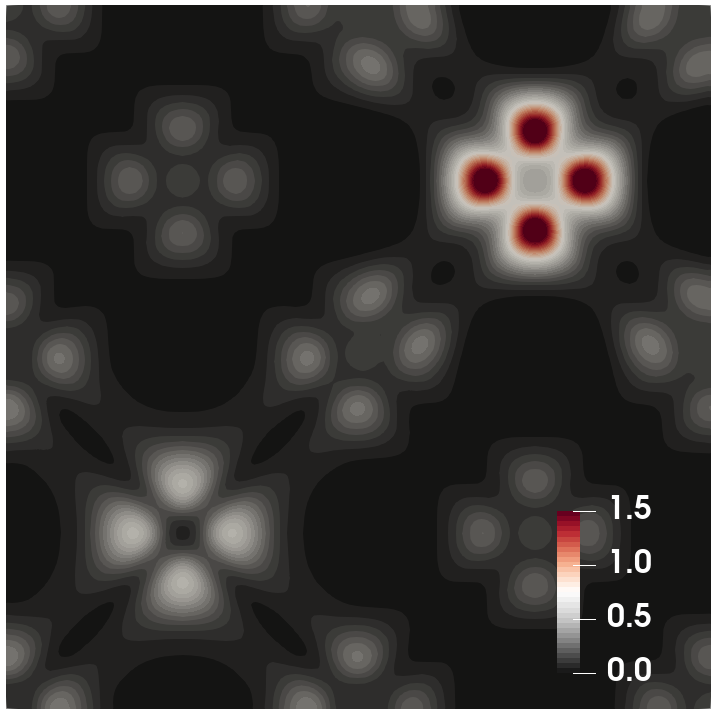}
\includegraphics[width=0.192\textwidth]{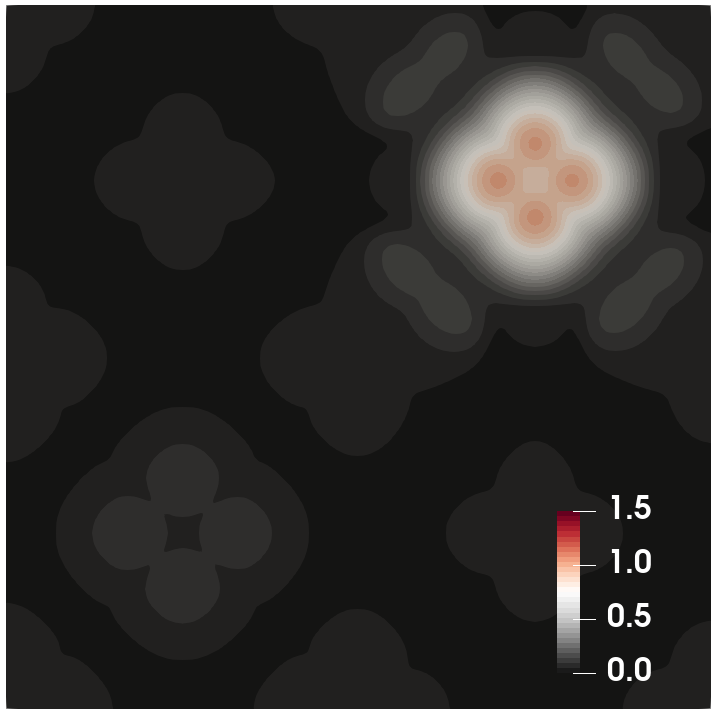}
}
\subfigure[$\beta=0.005$. (weak regularization)]
{
\includegraphics[width=0.192\textwidth]{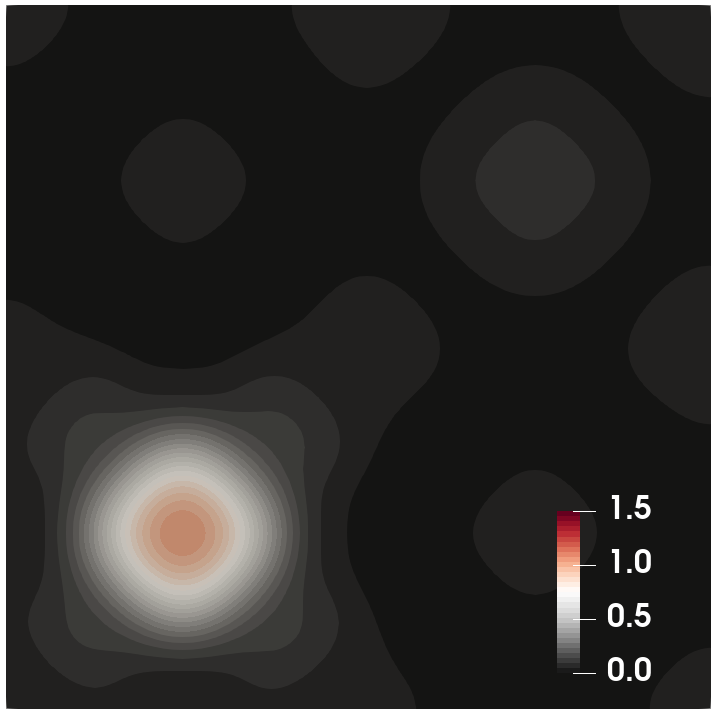}
\includegraphics[width=0.192\textwidth]{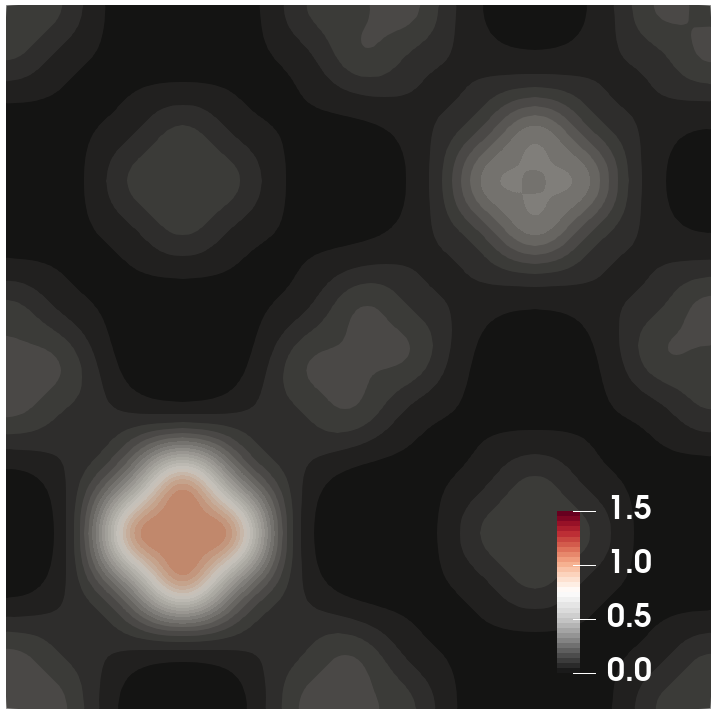}
\includegraphics[width=0.192\textwidth]{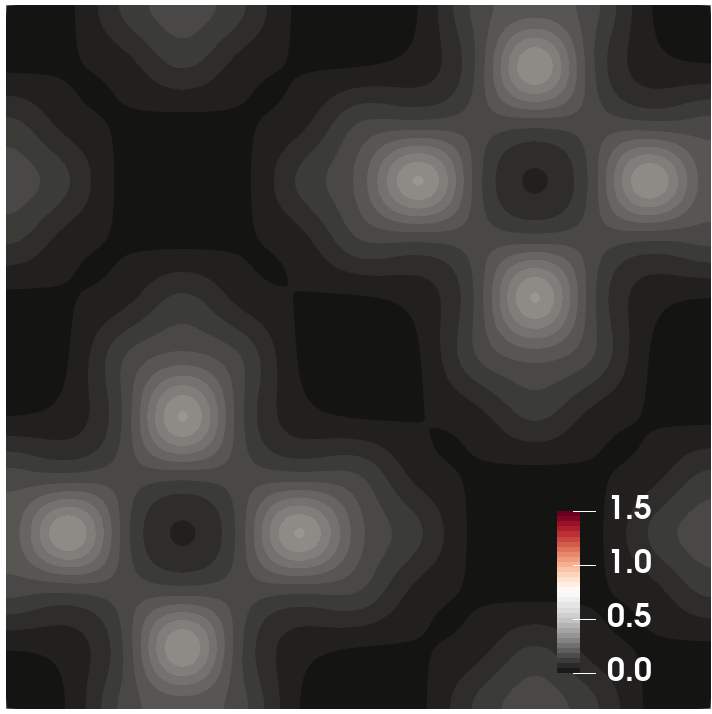}
\includegraphics[width=0.192\textwidth]{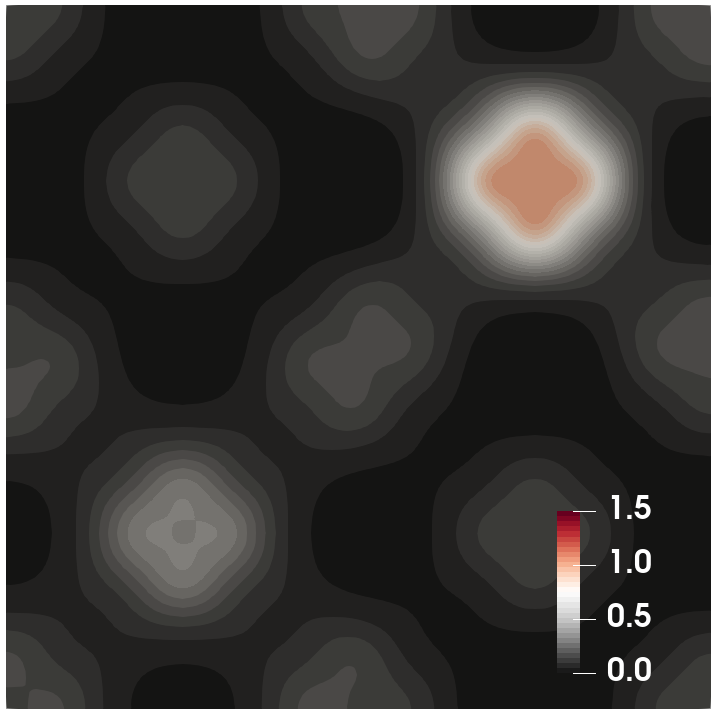}
\includegraphics[width=0.192\textwidth]{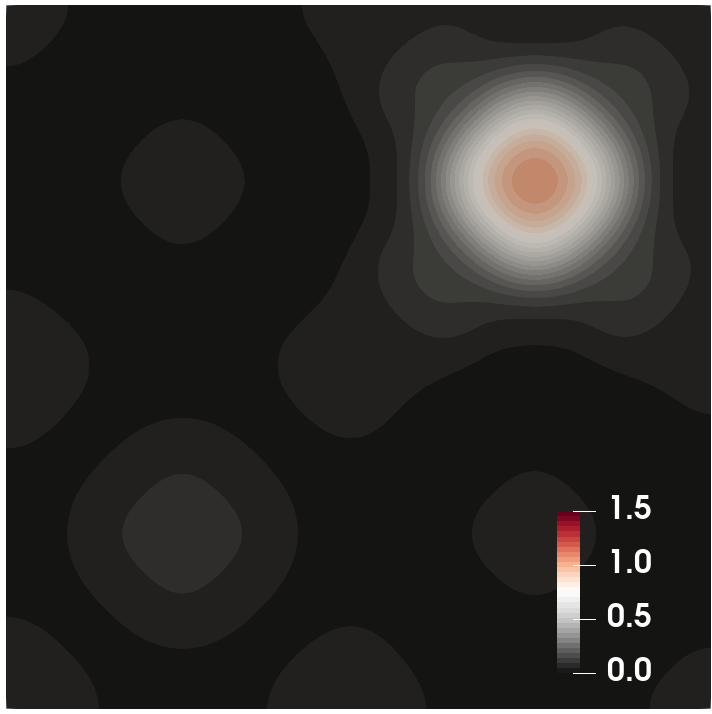}
}
\subfigure[$\beta=0.01$. (strong regularization)]
{
\includegraphics[width=0.192\textwidth]{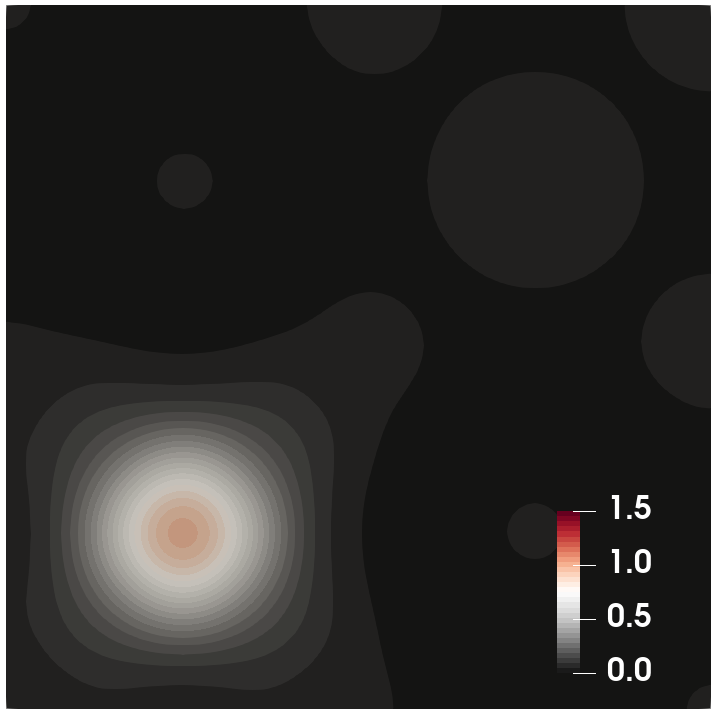}
\includegraphics[width=0.192\textwidth]{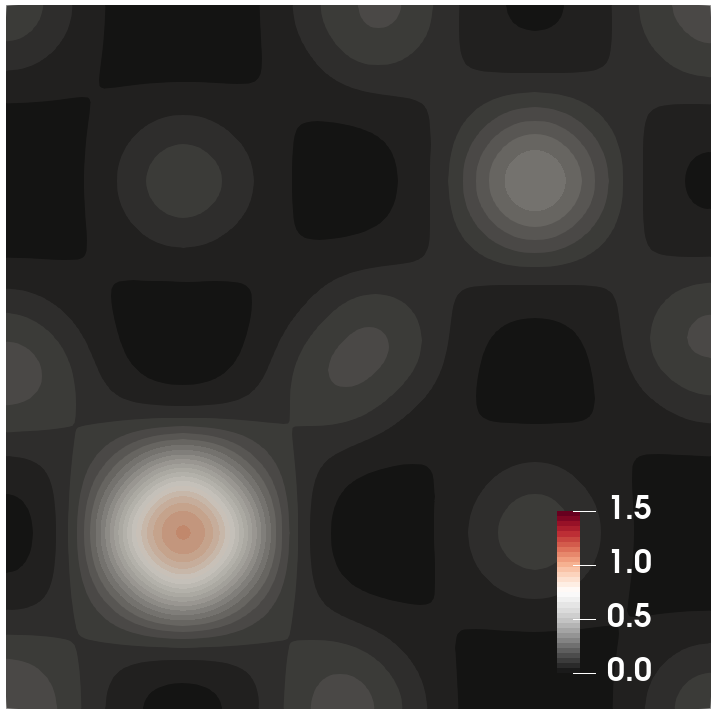}
\includegraphics[width=0.192\textwidth]{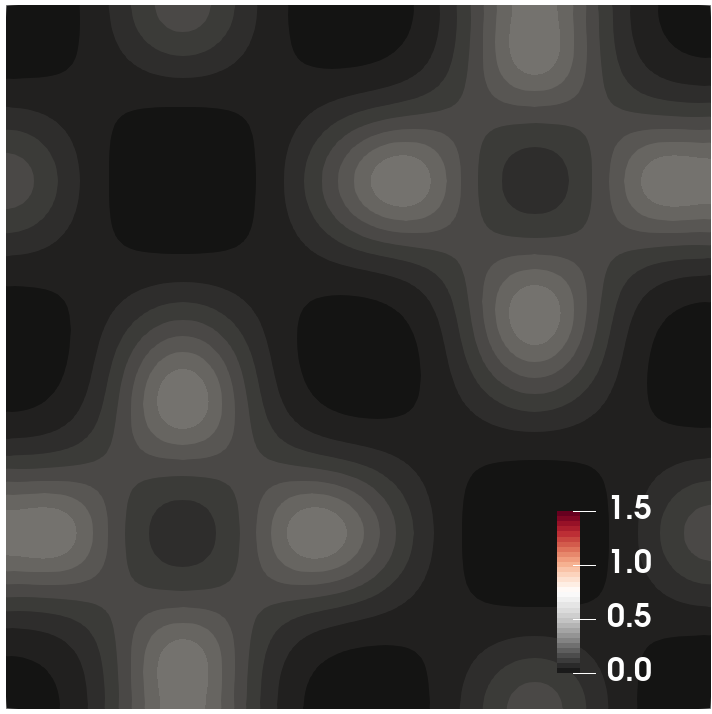}
\includegraphics[width=0.192\textwidth]{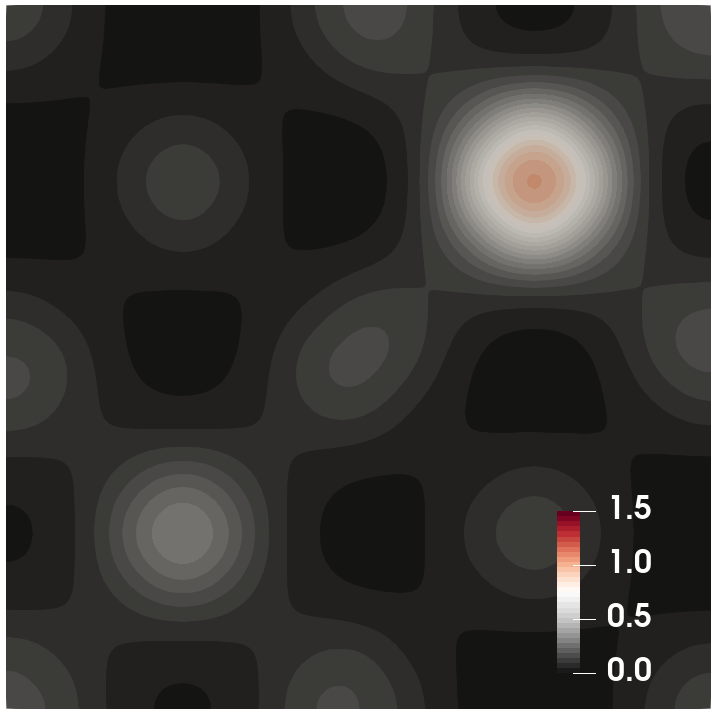}
\includegraphics[width=0.192\textwidth]{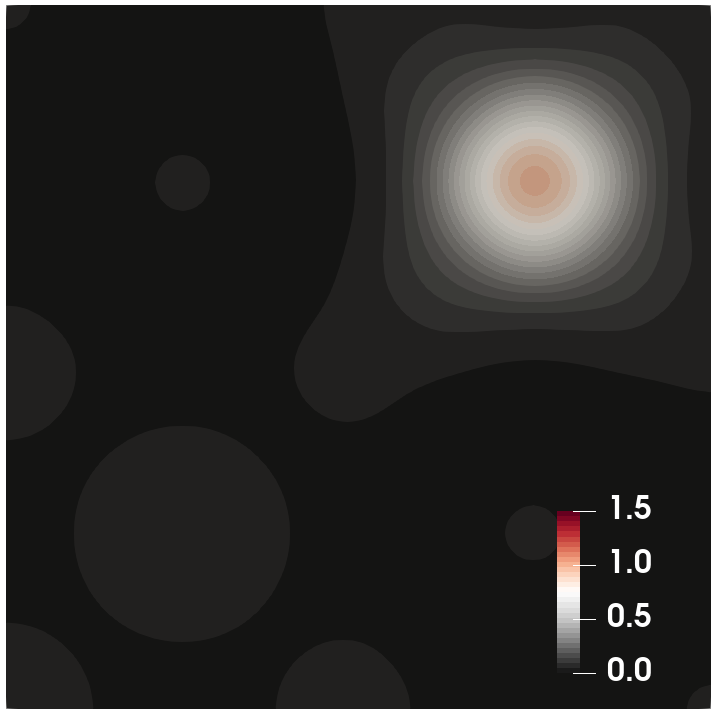}
}
\caption{Example \ref{ex2}. 
Snapshots of density contour at 
$t=$ 0.1,0.3,0.5,0.7,0.9 (left to right).
}
\label{fig:den-ex2-3D}
\end{figure}

\subsection{System MFC for reaction-diffusion ($M=2$, $R=1$)}
\label{ex3}
In this example, we consider a system model \eqref{vmfc} with $M=2$ species and $R=1$ reaction.
The initial/terminal densities for each component are given as:
\begin{align*}
\rho_1(0, x) = \exp(-50|x-x_A|^2), \;\;
\rho_1(1, x) = \exp(-50|x-x_B|^2), \\
\rho_2(0,x) = \exp(-50|x-x_B|^2), \;\;
\rho_2(1,x) = \exp(-50|x-x_A|^2).
\end{align*}
We take the mobilities $V_{1,i}(\rho_i)=V_{3,i}(\rho_i)=\rho_i$ for $i=1,2$, 
and the mobility
\[
V_{2,1}(\rho_1, \rho_2)=20\frac{\rho_1-\rho_2}{\log(\rho_1)-\log(\rho_2)}.
\]
The potential function is given as follows
\begin{align*}
\bm F(t,x,\bmr) = &\;- \Big(0.01\rho_1\log(\rho_1)
+0.4\rho_1\cos(4\pi t)\prod_{i=1}^d\cos(4\pi x_i)\\
&\;\hspace{1cm} +0.005\rho_2\log(\rho_2)\Big)
\end{align*}
The potential function for the first component has a drift and an entropy term in $\rho_1$, while that for the second component only has a smaller entropy term in $\rho_2$. We use the same discretization as in the previous two examples.
The density contours for the 1D results ($d=1$) are shown in Figure~\ref{fig:den-ex3-2D} for  $\beta=0$, 
$\beta=0.005$, and 
$\beta=0.01$.
The snapshots of density contours at 
different times for the 2D results ($d=2$) are shown in 
Figure~\ref{fig:den-ex3-3D}
for $\beta=0$ and 
$\beta=0.01$. Without reaction mobility $V_2$, the second component density path $\rho_2$ will be similar to a Gaussian translation. 
It is clear that the reaction mobility completely changes the density evolution. Moreover, increasing the regularization parameter $\beta$ leads to a diffusive density path, as expected.

\begin{figure}[H]
\centering
\subfigure[$\rho_1$. Left: $\beta=0$, middle: $\beta=0.005$, right: $\beta=0.01$.]
{
\includegraphics[width=0.23\textwidth]{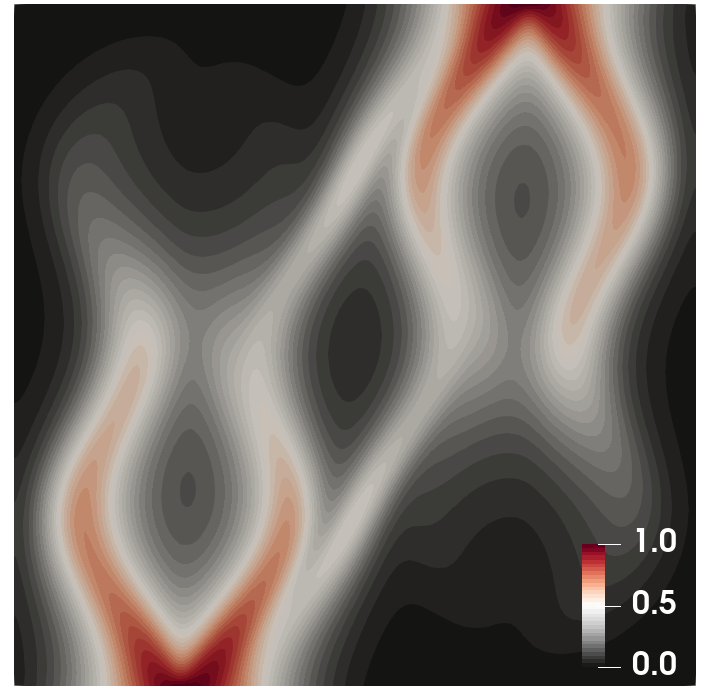}
\includegraphics[width=0.23\textwidth]{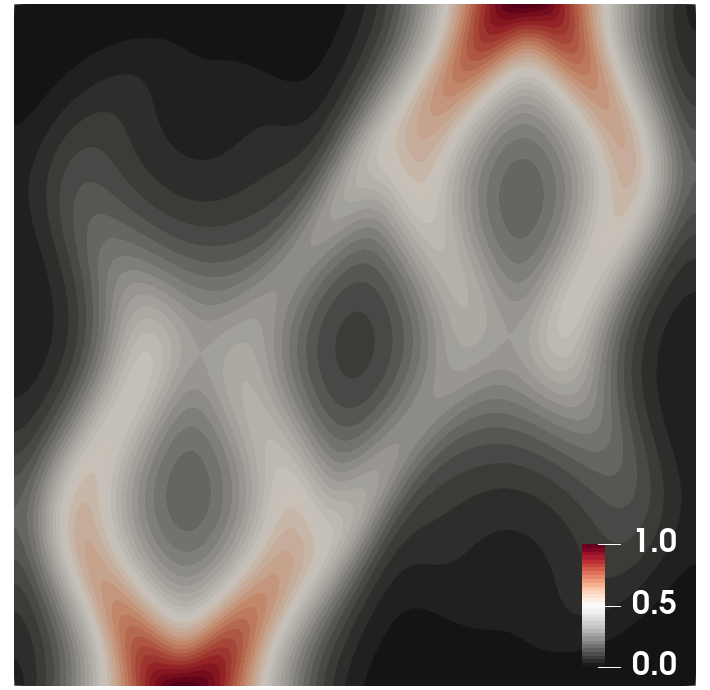}
\includegraphics[width=0.23\textwidth]{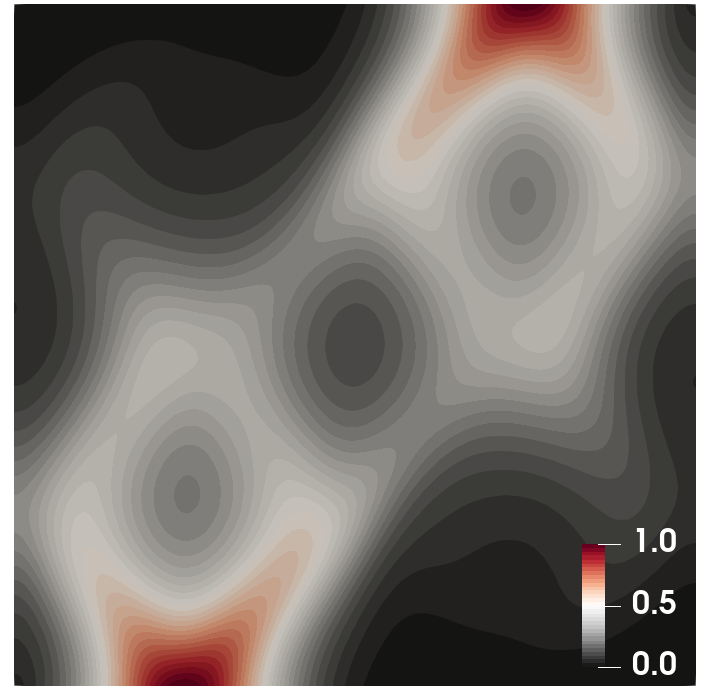}
}
\subfigure[$\rho_2$. Left: $\beta=0$, middle: $\beta=0.005$, right: $\beta=0.01$.]
{
\includegraphics[width=0.23\textwidth]{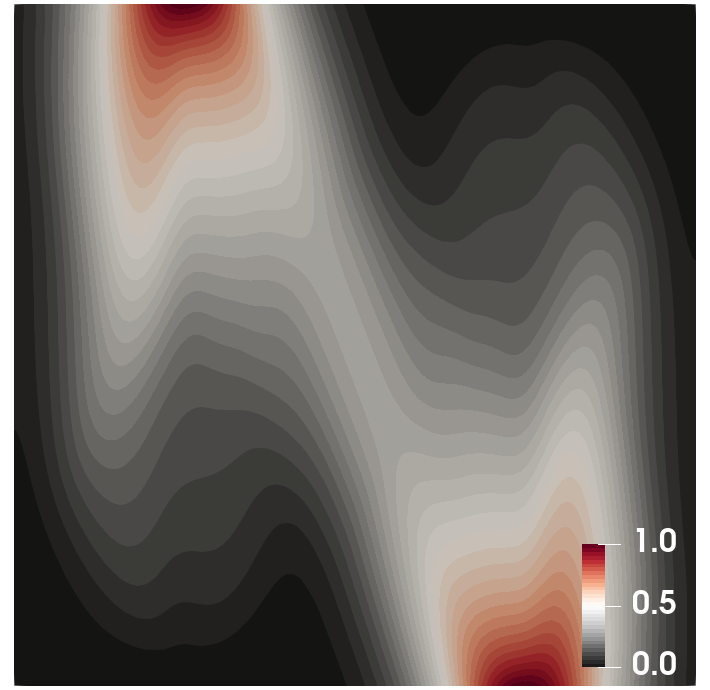}
\includegraphics[width=0.23\textwidth]{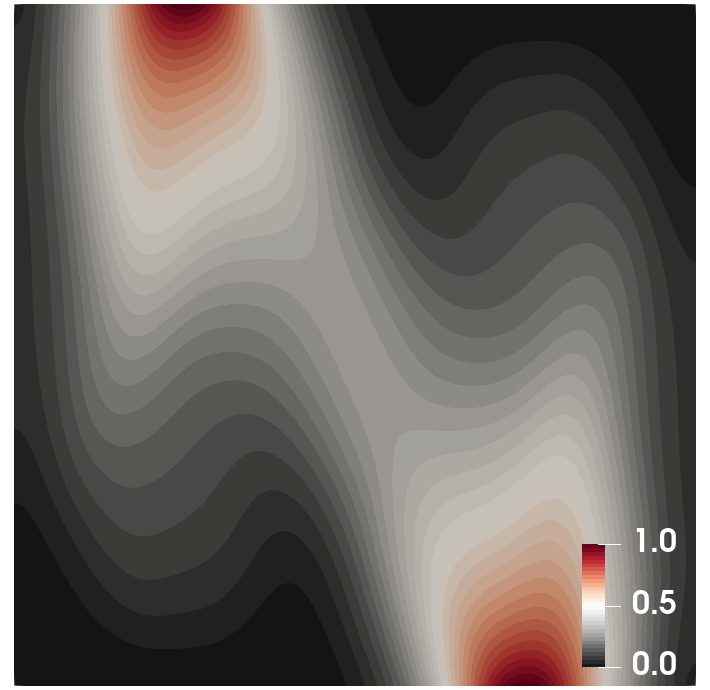}
\includegraphics[width=0.23\textwidth]{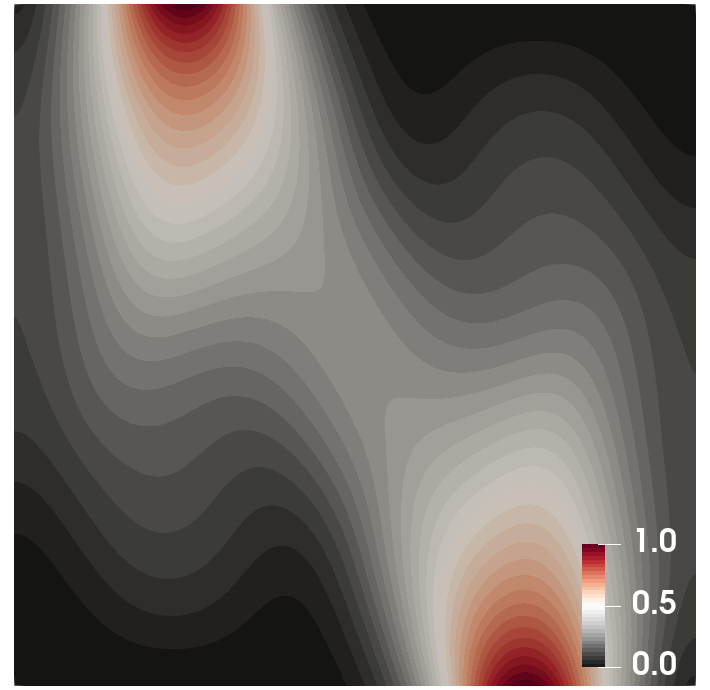}
}
\caption{Example \ref{ex3}. 
Snapshots of $\rho_1$ (top) and $\rho_2$ (bottom).
}
\label{fig:den-ex3-2D}
\end{figure}

\begin{figure}[H]
\centering
\subfigure[$\rho_1$. $\beta=0$. ]
{
\includegraphics[width=0.192\textwidth]{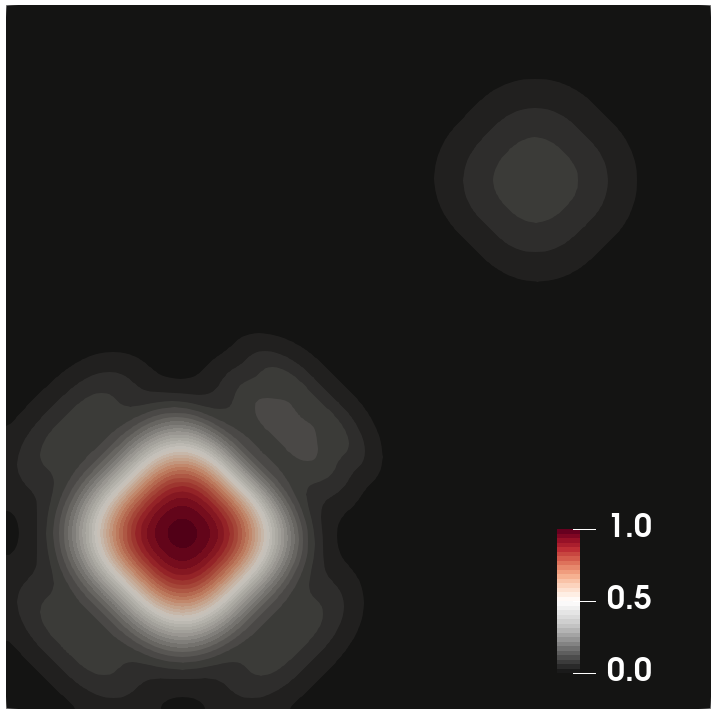}
\includegraphics[width=0.192\textwidth]{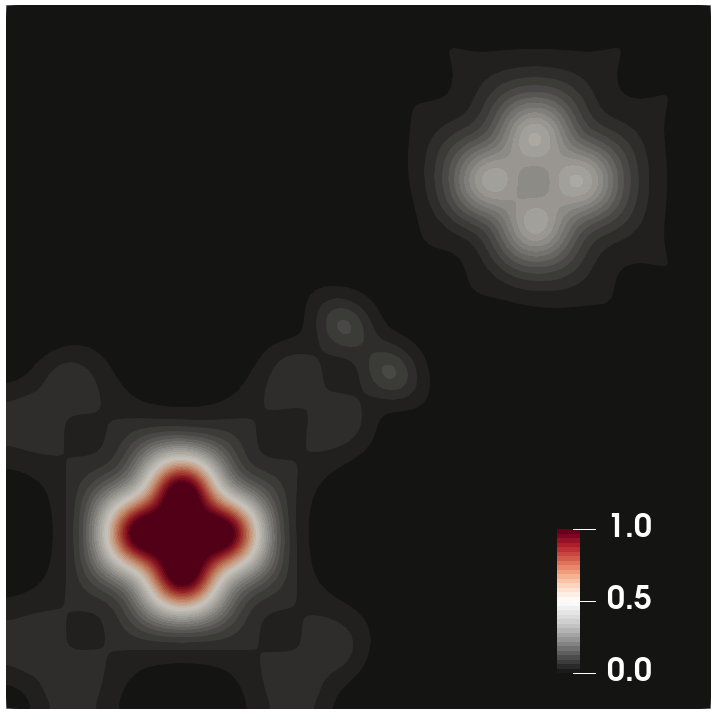}
\includegraphics[width=0.192\textwidth]{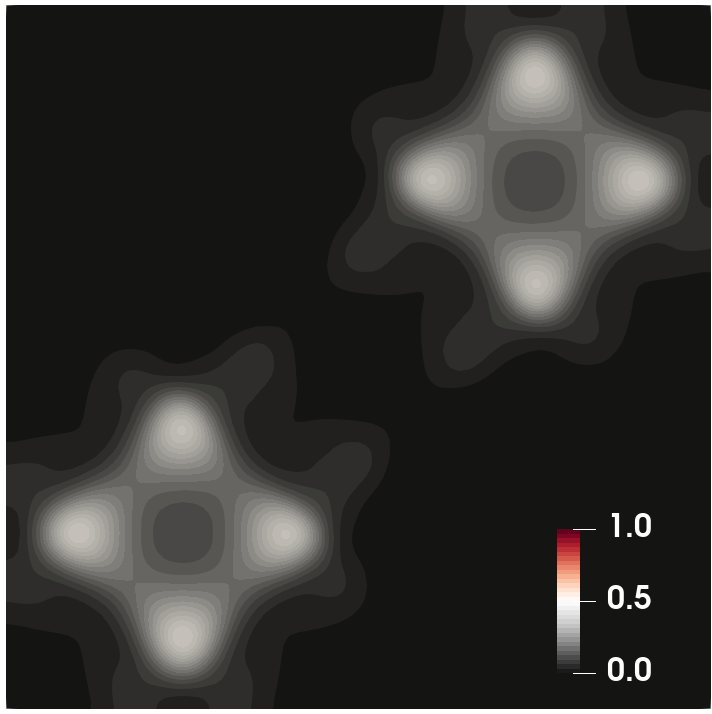}
\includegraphics[width=0.192\textwidth]{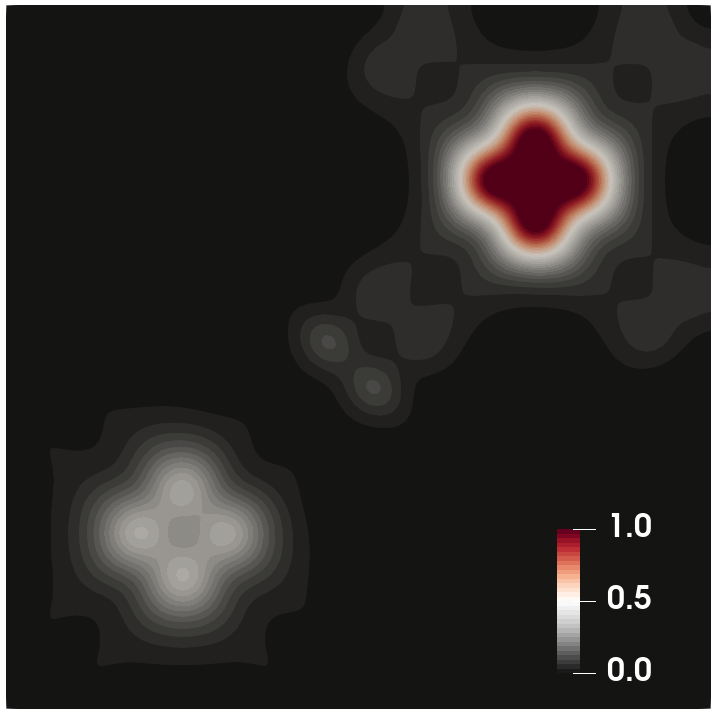}
\includegraphics[width=0.192\textwidth]{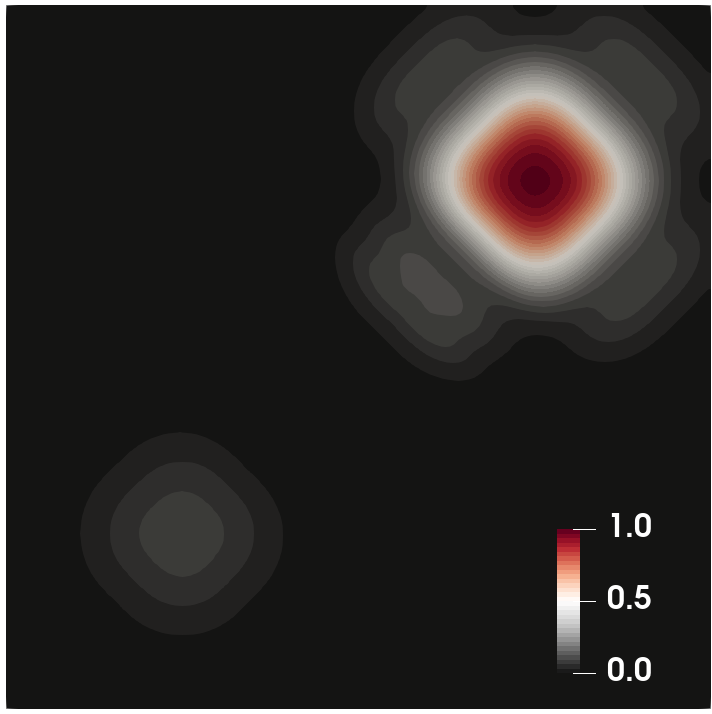}
}
\subfigure[$\rho_1$. $\beta=0.01$.]
{
\includegraphics[width=0.192\textwidth]{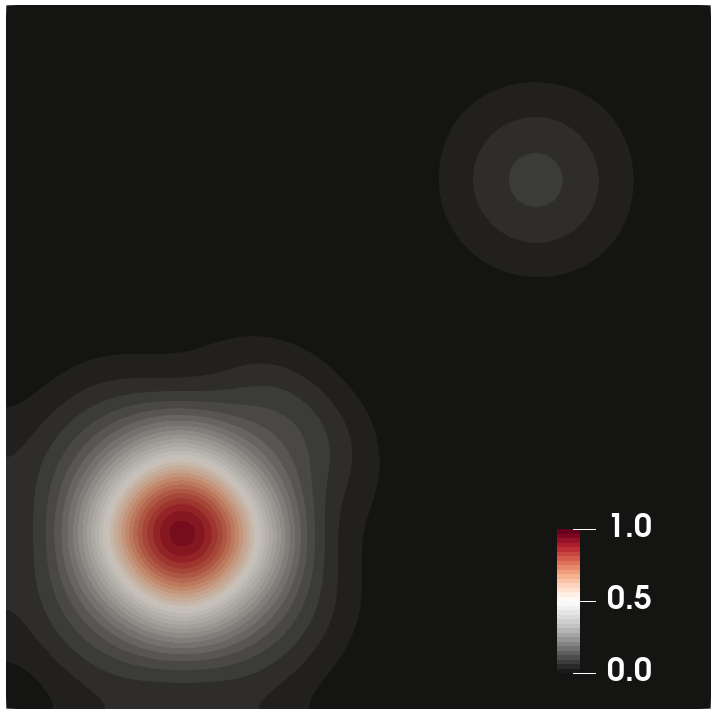}
\includegraphics[width=0.192\textwidth]{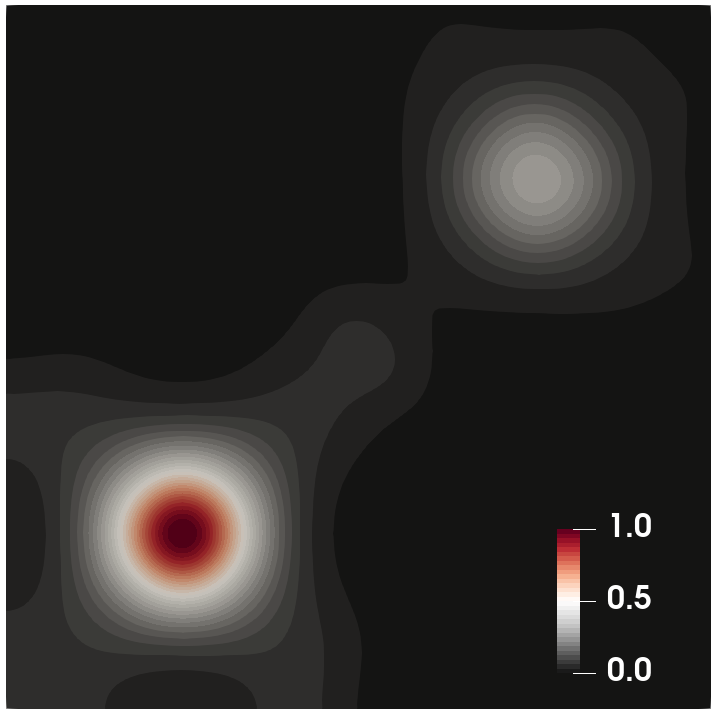}
\includegraphics[width=0.192\textwidth]{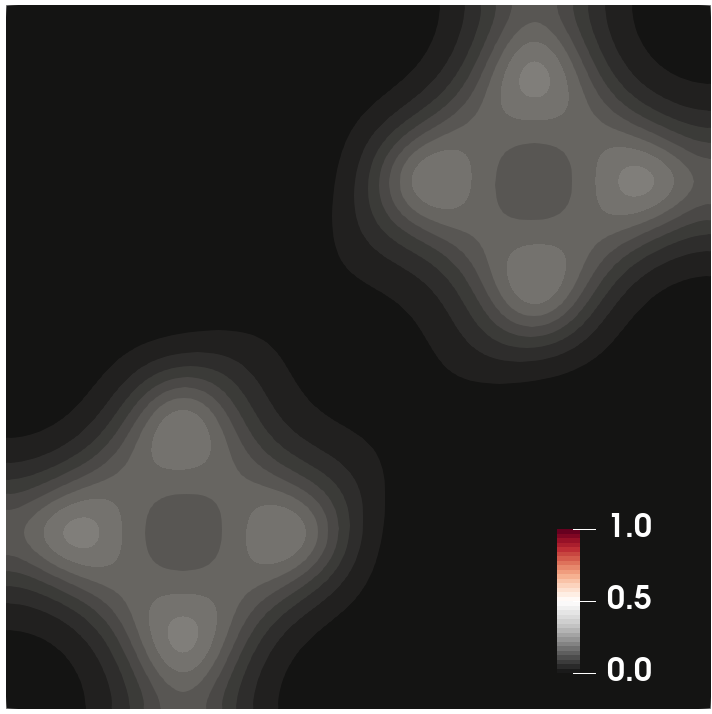}
\includegraphics[width=0.192\textwidth]{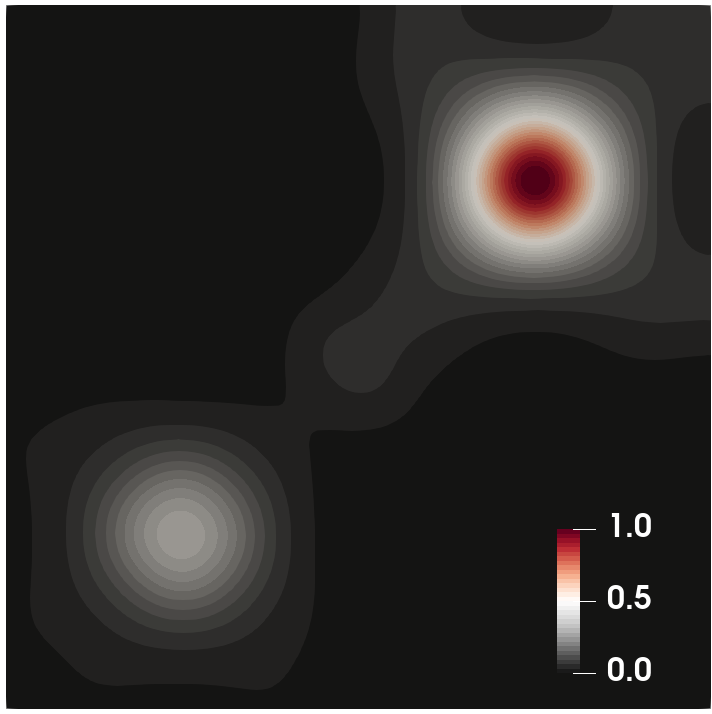}
\includegraphics[width=0.192\textwidth]{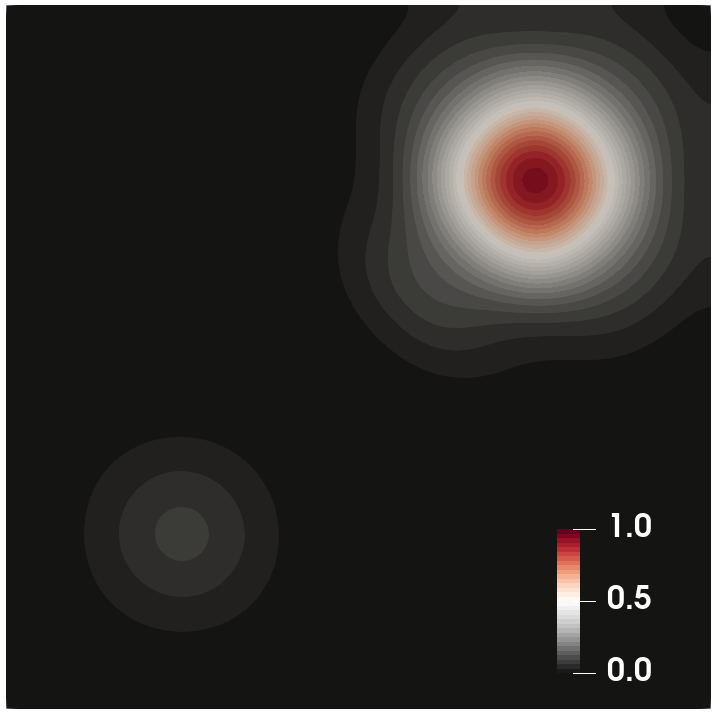}
}
\subfigure[$\rho_2$. $\beta=0$.]
{
\includegraphics[width=0.192\textwidth]{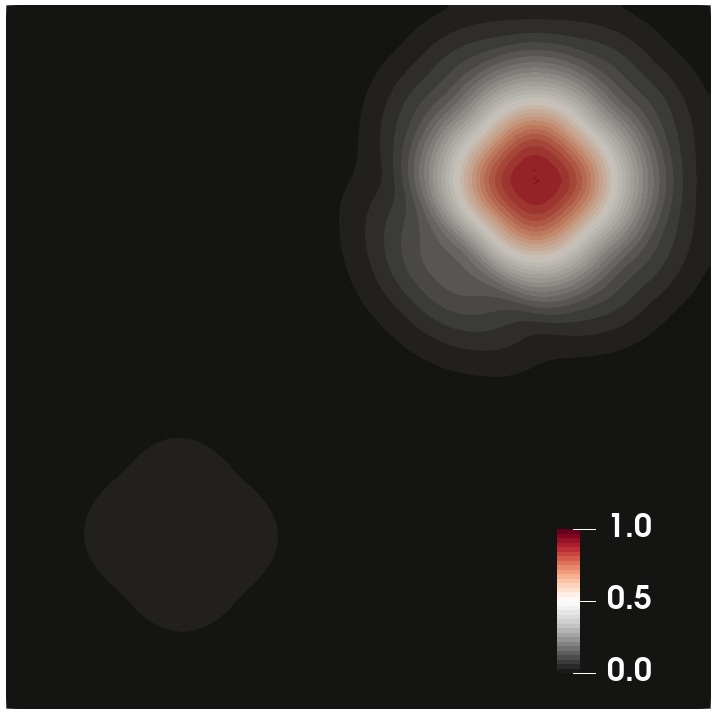}
\includegraphics[width=0.192\textwidth]{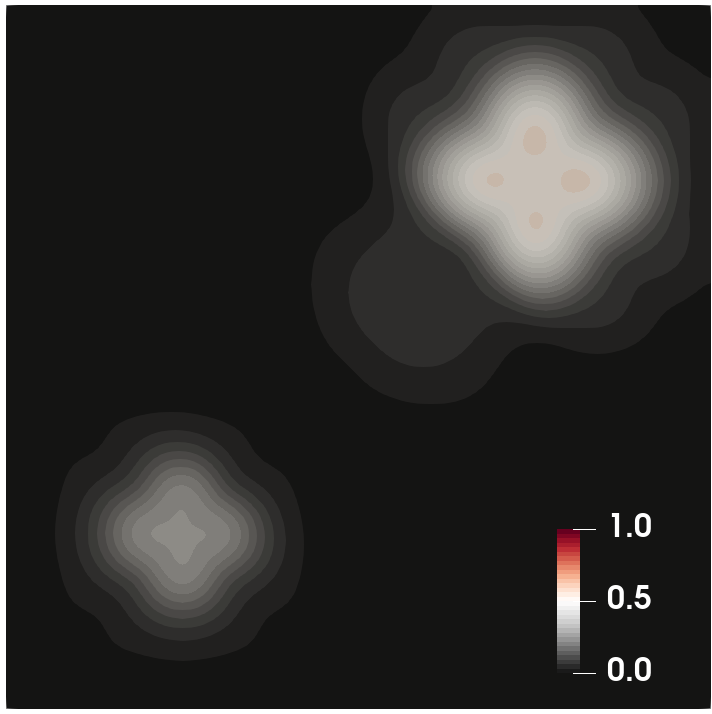}
\includegraphics[width=0.192\textwidth]{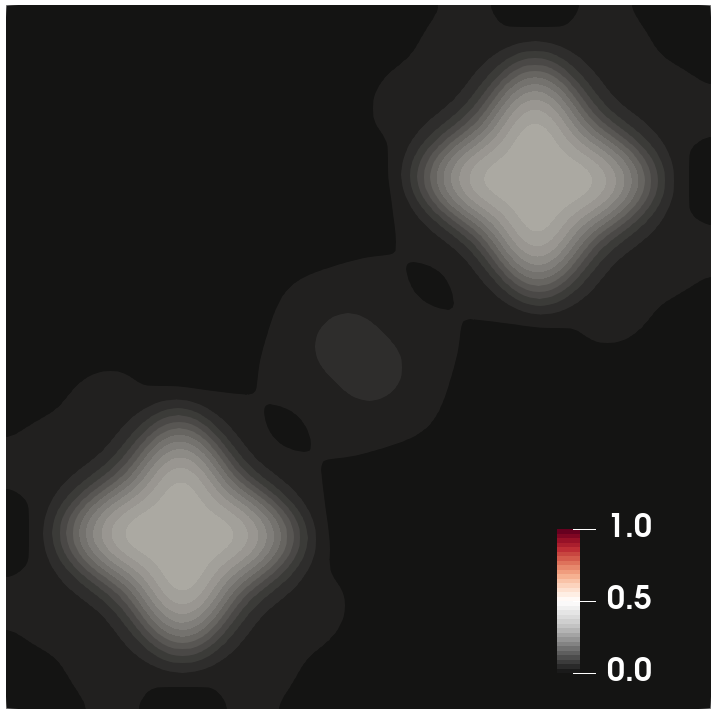}
\includegraphics[width=0.192\textwidth]{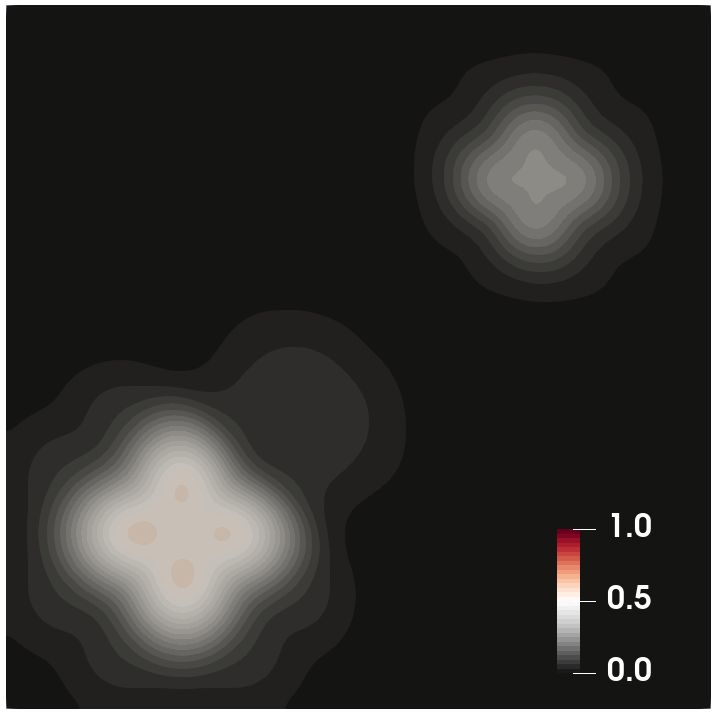}
\includegraphics[width=0.192\textwidth]{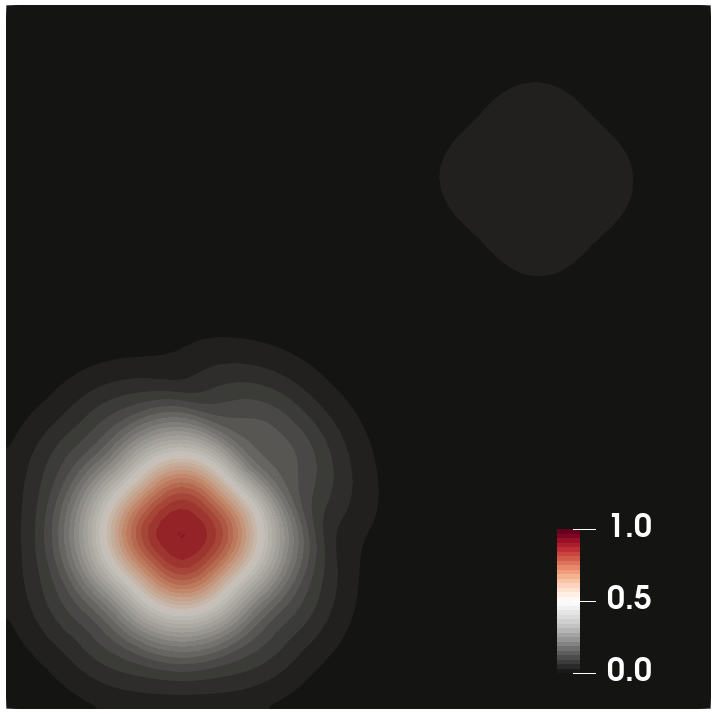}
}
\subfigure[$\rho_2$.  $\beta=0.01$. ]
{
\includegraphics[width=0.192\textwidth]{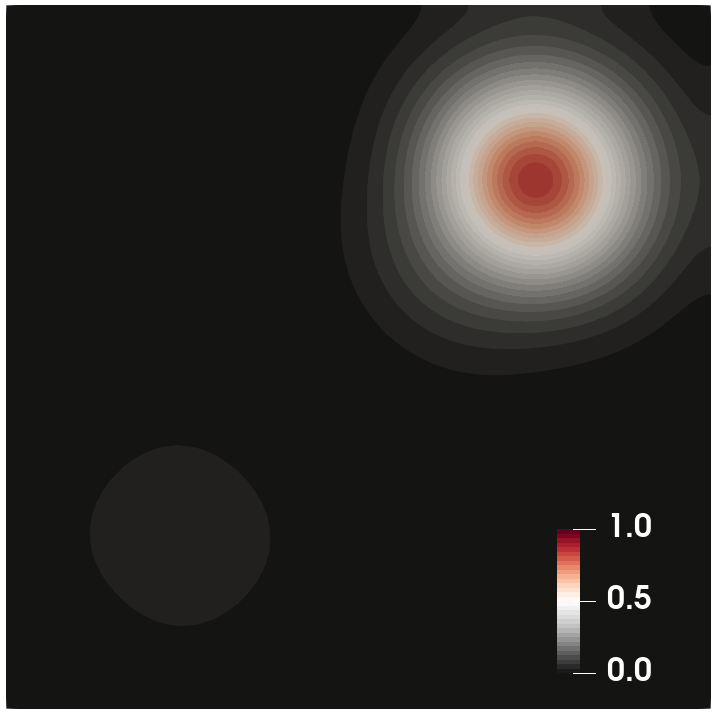}
\includegraphics[width=0.192\textwidth]{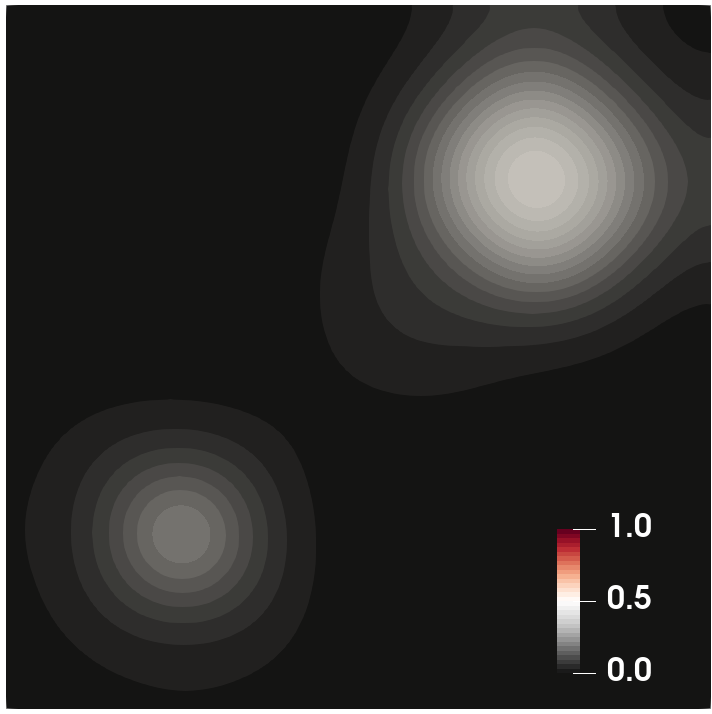}
\includegraphics[width=0.192\textwidth]{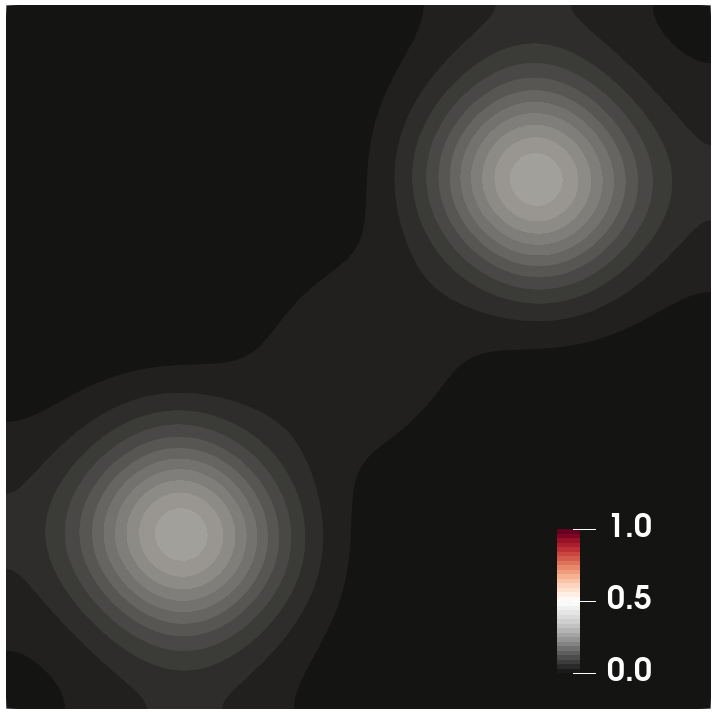}
\includegraphics[width=0.192\textwidth]{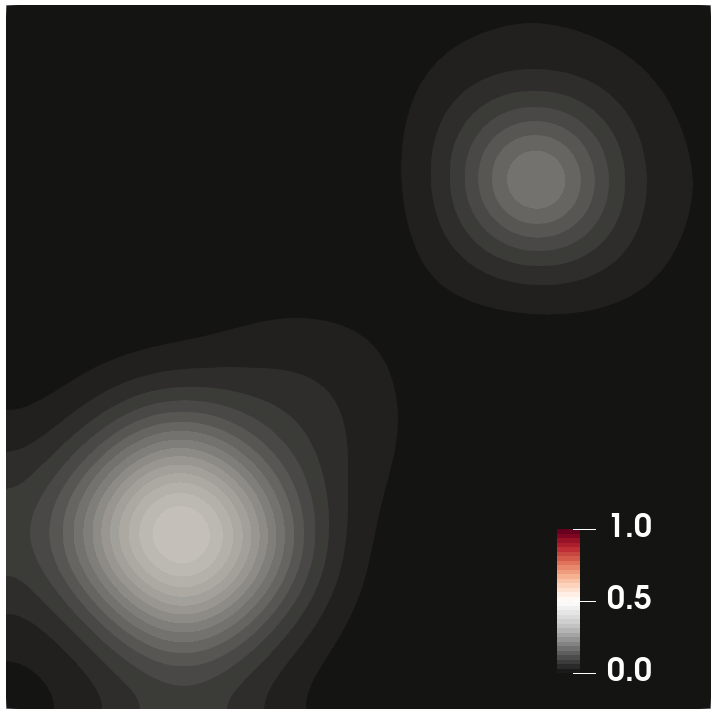}
\includegraphics[width=0.192\textwidth]{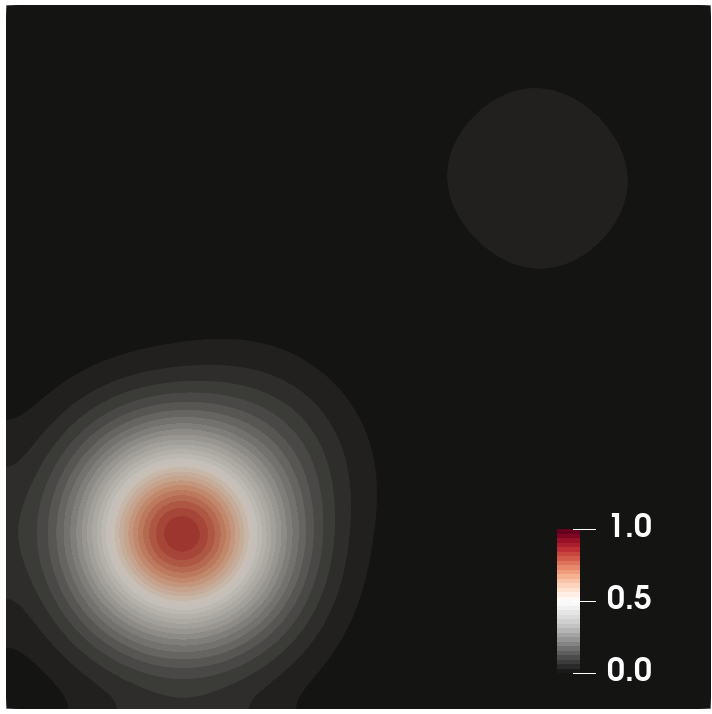}
}
\caption{Example \ref{ex3}. 
Snapshots of $\rho_1$ and $\rho_2$ at 
$t=$ 0.1,0.3,0.5,0.7,0.9 (left to right).
}
\label{fig:den-ex3-3D}
\end{figure}

\subsection{2D Scalar MFC for reaction-diffusion: image transfer}
\label{ex4}
In this example, we consider a 2D scalar mean field planning problem \eqref{smfc} with complex initial and terminal densities $\rho^0(x)$ and $\rho^1(x)$, and
a complex spatial coordinate dependent mobility $V_2(x, \rho)=20\rho^2(x)\rho$ and potential  $F(x,\rho)= -0.001\rho^3(x)\rho$. 
Here the four non-negative functions $\rho^0(x)$, $\rho^1(x)$, $\rho^2(x)$, and $\rho^3(x)$ are normalized mascot images as shown in Figure~\ref{fig:img}, which are logos from University of Notre Dame, UCLA, Portland State university, and University of South Carolina, respectively. 
\begin{figure}[h]
\centering
\subfigure[$\rho^0(x)$]
{\includegraphics[width=0.23\textwidth]{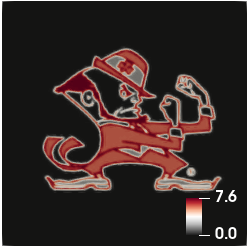}}
\subfigure[$\rho^1(x)$]
{\includegraphics[width=0.23\textwidth]{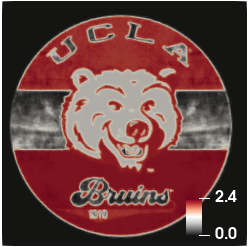}}
\subfigure[$\rho^2(x)$]
{\includegraphics[width=0.23\textwidth]{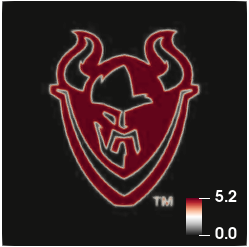}}
\subfigure[$\rho^3(x)$]
{\includegraphics[width=0.23\textwidth]{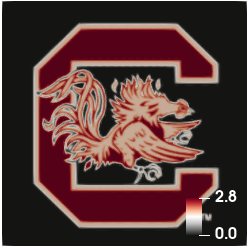}}
\caption{Example \ref{ex4}. 
The four functions $\rho^0(x)$, $\rho^1(x)$, $\rho^2(x)$, $\rho^3(x)$.
Initial density is $\rho^0$, terminal density is $\rho^1$, mobility $V_2(x,\rho) = 20\rho^2(x)\rho$
and potential  $F(x, \rho)=-0.001\rho^3(x)\rho$.
}
\label{fig:img}
\end{figure}
We further take $V_1(\rho)=V_3(\rho) = \rho$, and vary the regularization parameter $\beta=0$, $\beta=5\times 10^{-4}$, and 
$\beta=10^{-3}$.
We apply the scheme \eqref{SD-scalarh3X} with polynomial degree $k=4$ on a $16\times 64\times 64$ uniform mesh, and perform 2000 ALG iterations.
Snapshots of density contours at 
different times for different $\beta$
are shown in Figure~\ref{fig:den-ex4}. 
It is interesting to observe that the mobility coefficient $\rho^2(x)$ in $V_2$ and the interaction coefficient $\rho^3(x)$ in $F$
are imprinted in the density evolution. 
We also observe a strong diffusion effect when $\beta=10^{-3}$.

\begin{figure}[H]
\centering
\subfigure[$\beta=0$]
{
\includegraphics[width=0.192\textwidth]{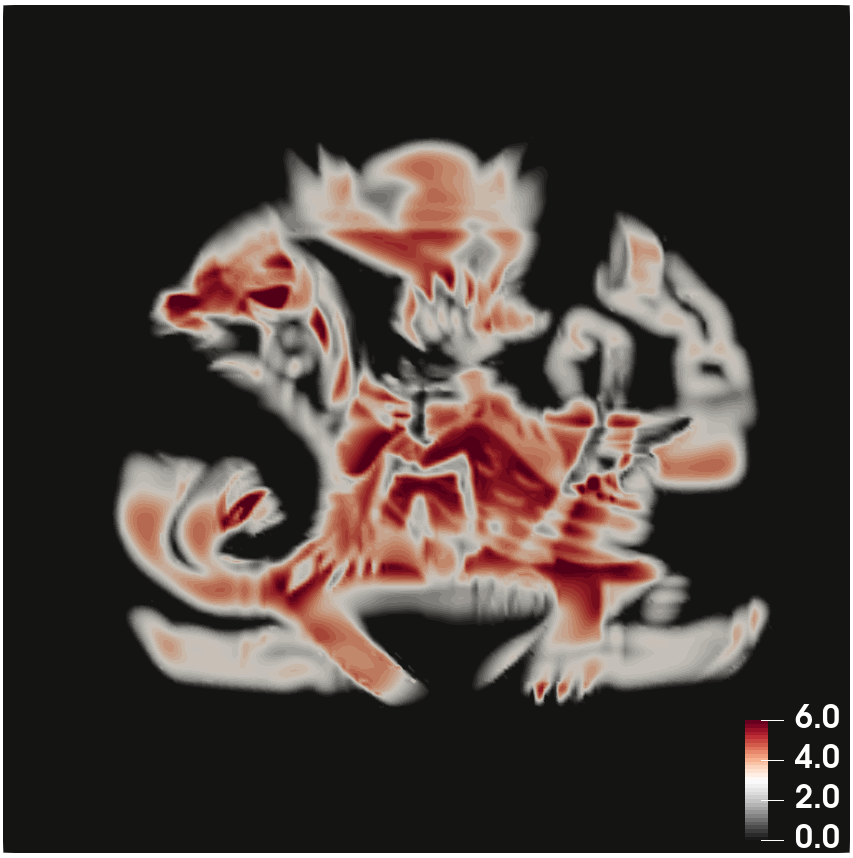}
\includegraphics[width=0.192\textwidth]{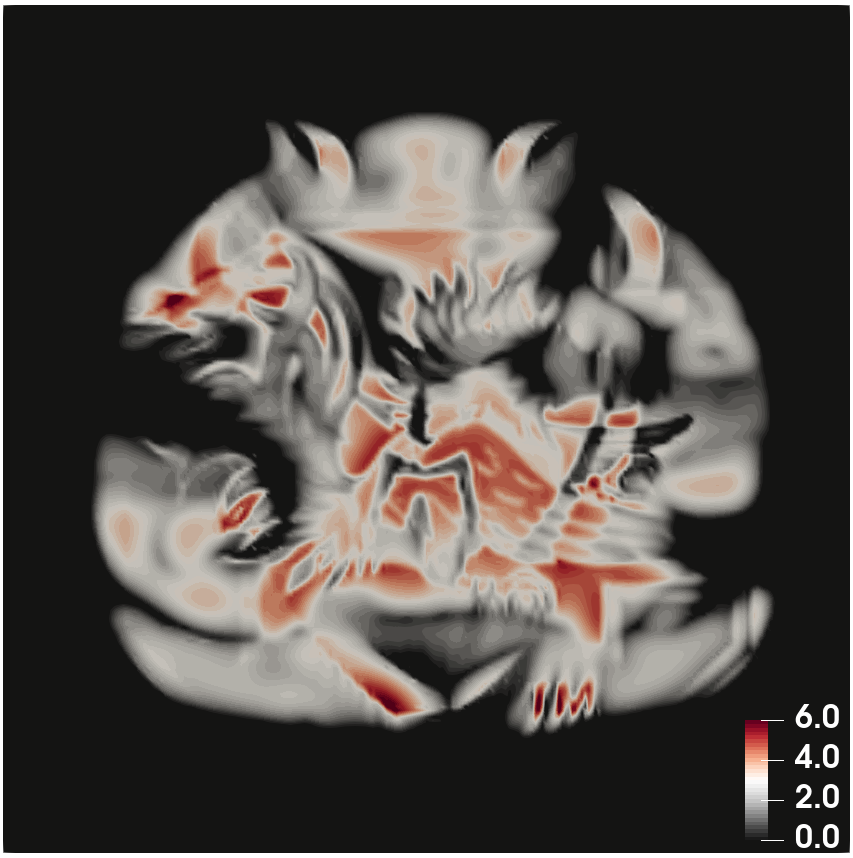}
\includegraphics[width=0.192\textwidth]{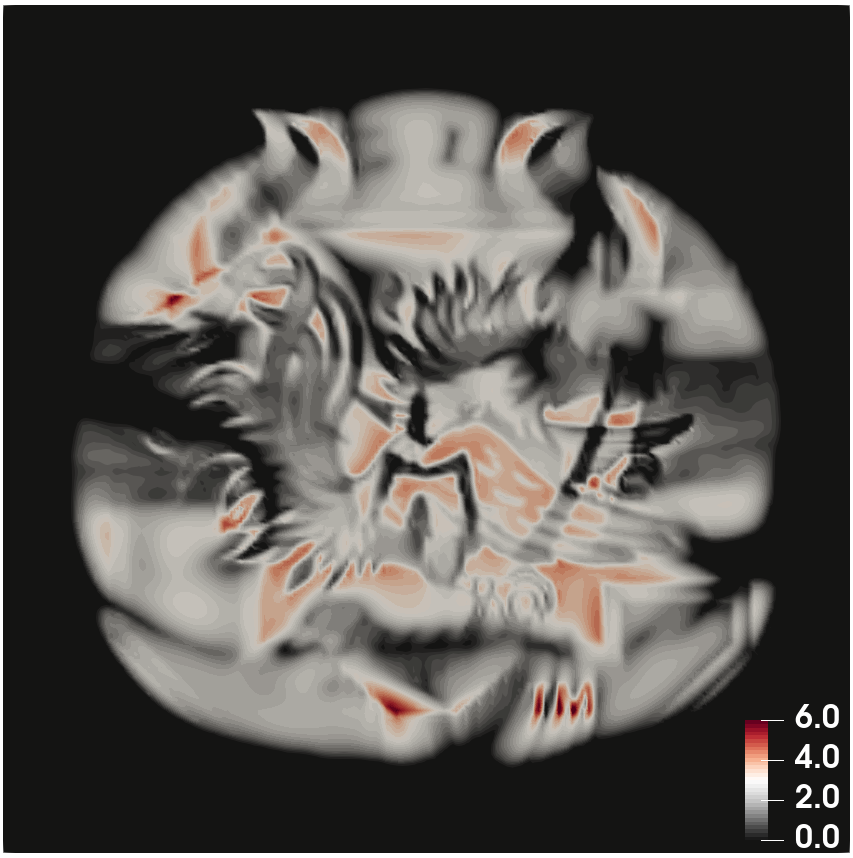}
\includegraphics[width=0.192\textwidth]{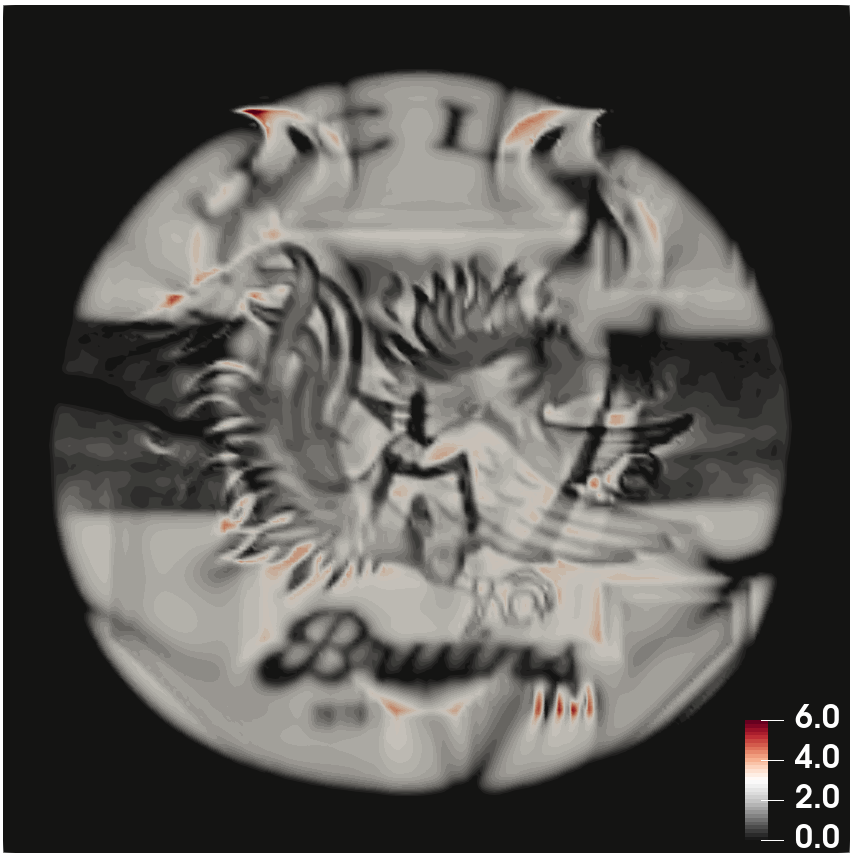}
}
\subfigure[$\beta=5\times 10^{-4}$]
{
\includegraphics[width=0.192\textwidth]{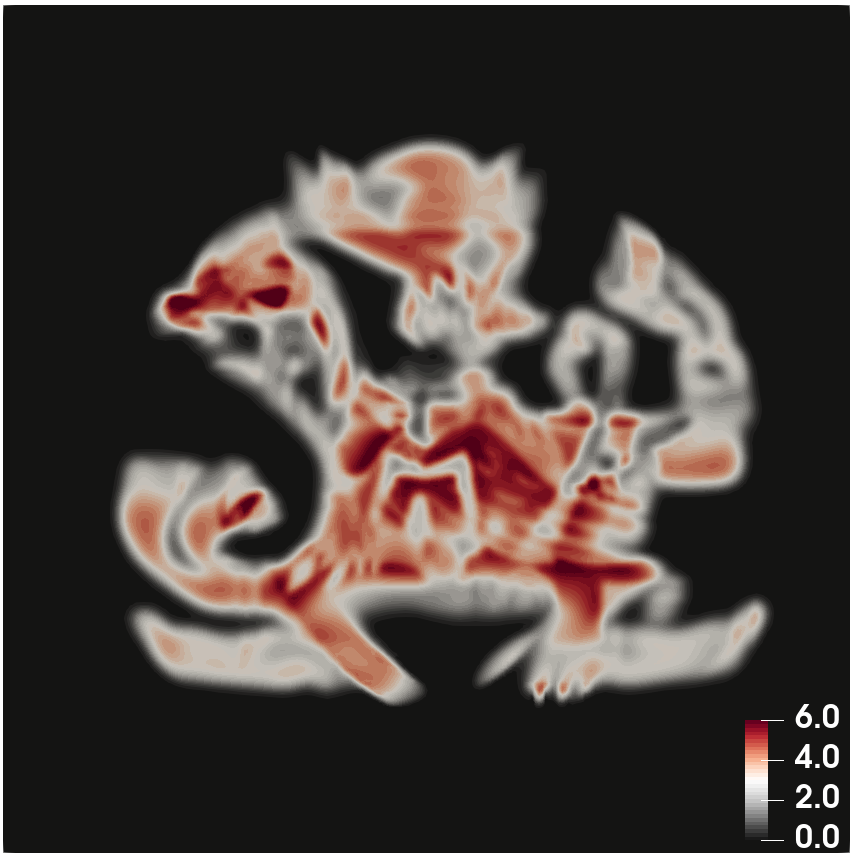}
\includegraphics[width=0.192\textwidth]{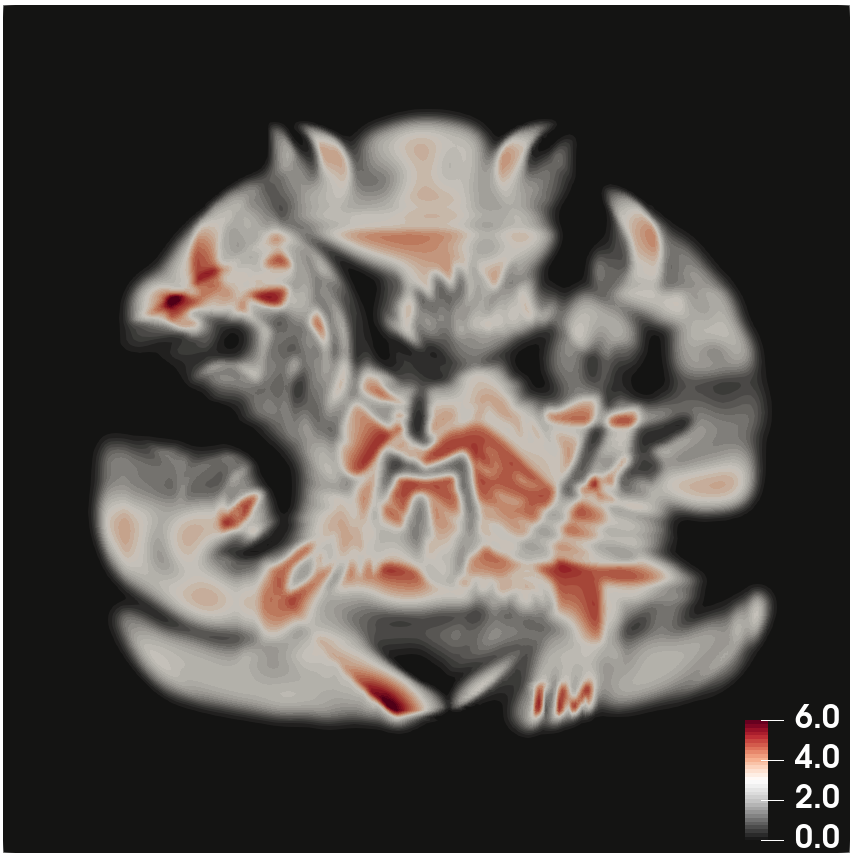}
\includegraphics[width=0.192\textwidth]{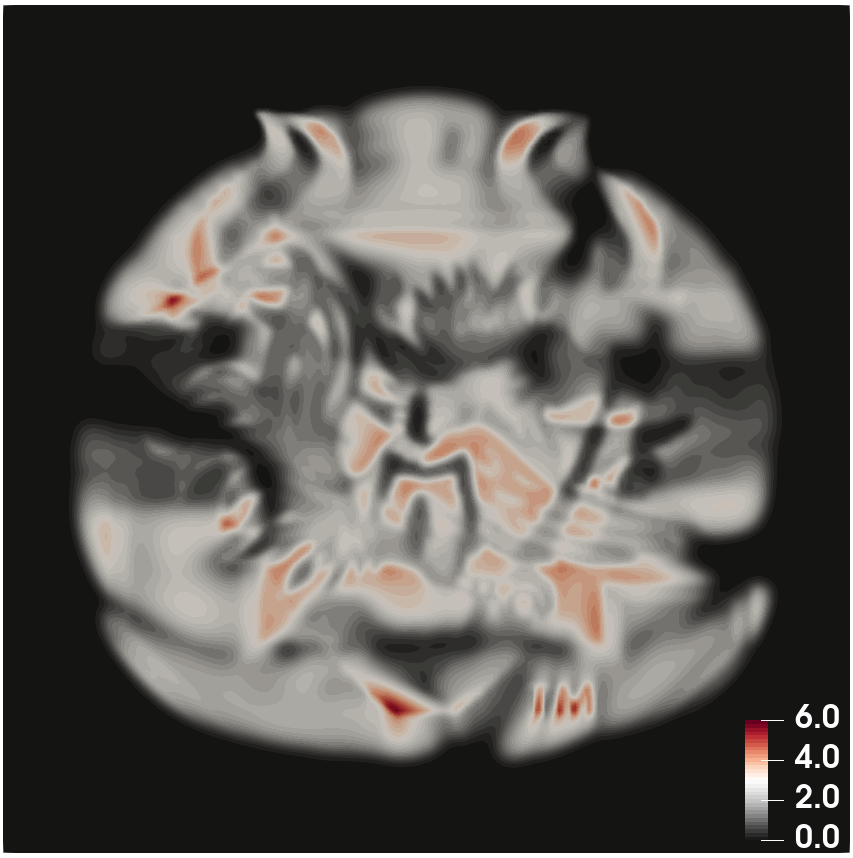}
\includegraphics[width=0.192\textwidth]{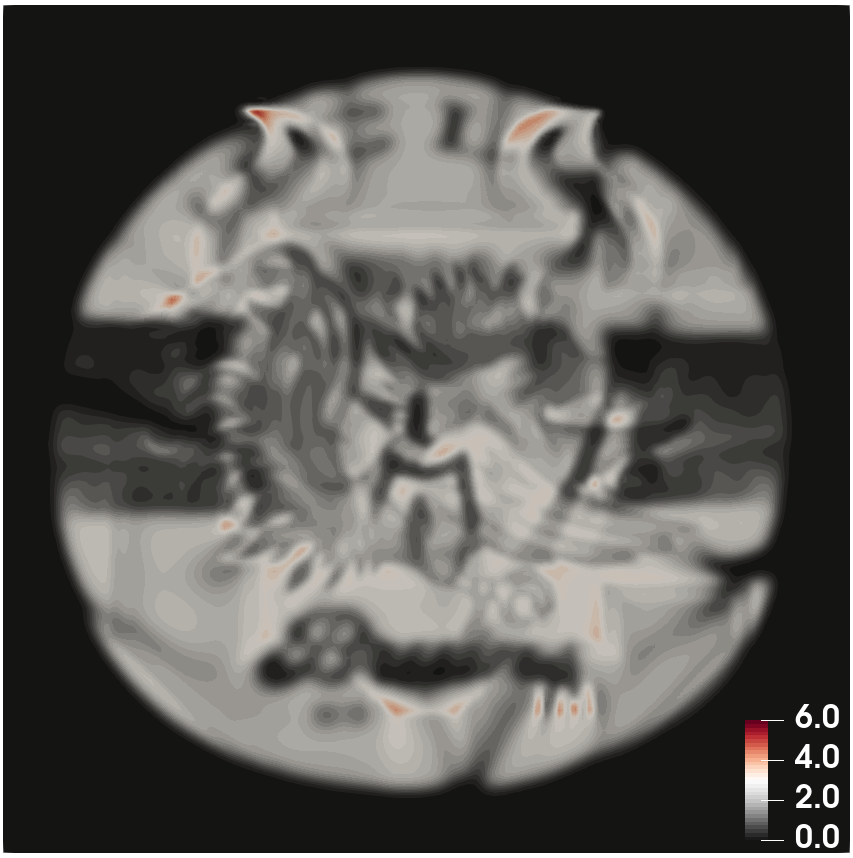}
}
\subfigure[$\beta=10^{-3}$]
{
\includegraphics[width=0.192\textwidth]{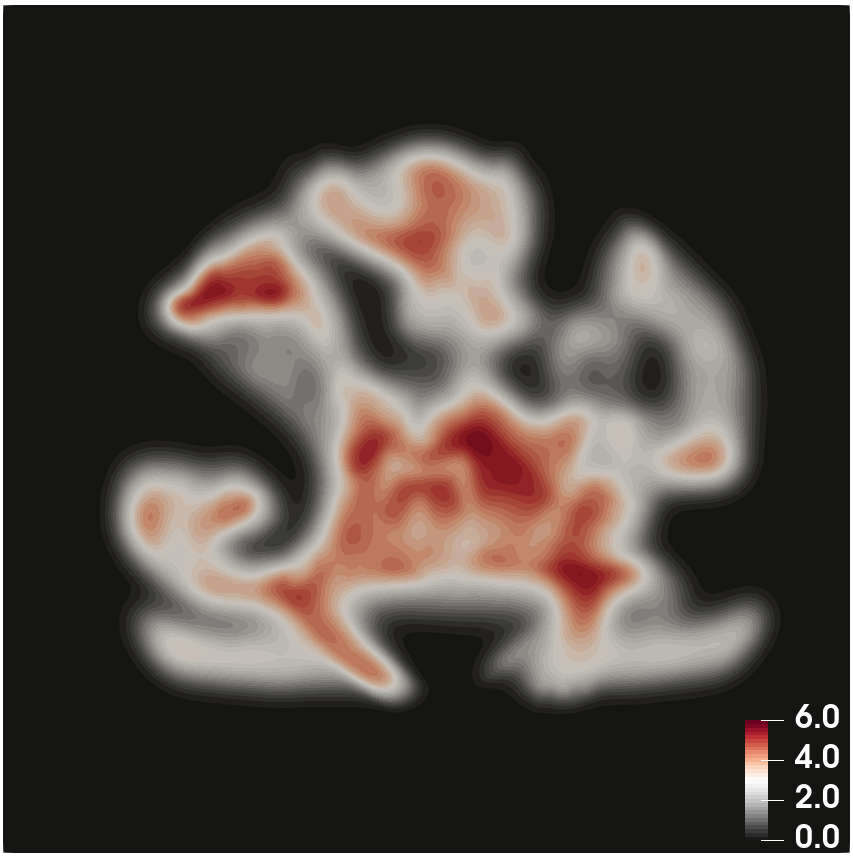}
\includegraphics[width=0.192\textwidth]{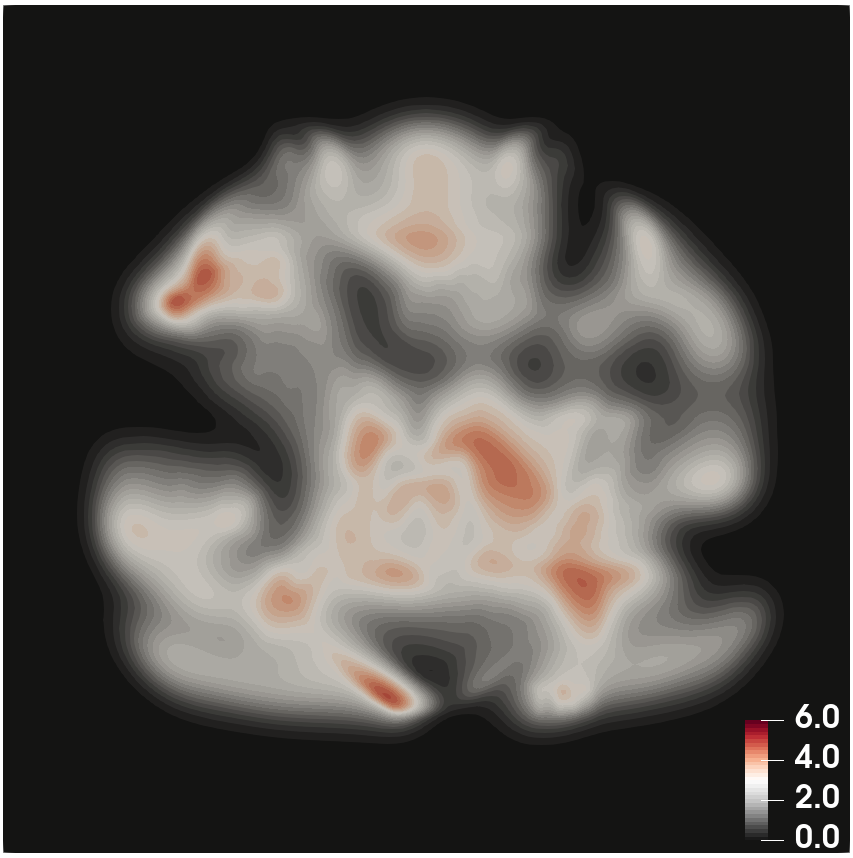}
\includegraphics[width=0.192\textwidth]{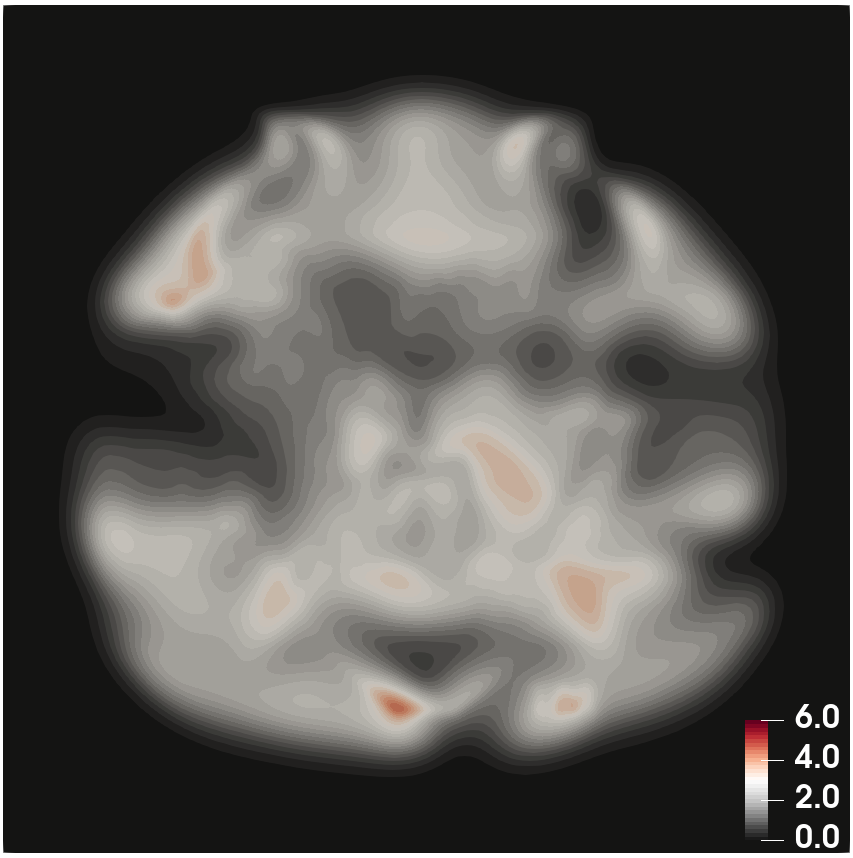}
\includegraphics[width=0.192\textwidth]{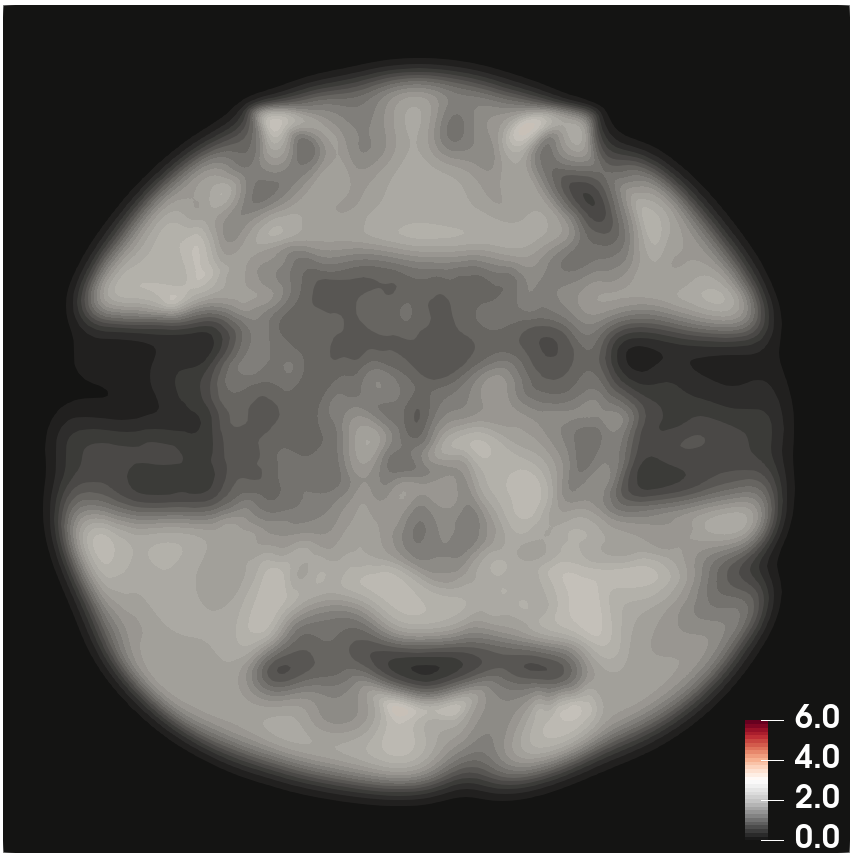}
}
\caption{Example \ref{ex4}. 
Snapshots of density contours at 
$t=$ 0.2,0.4,0.6,0.8 (left to right) for different $\beta$.
}
\label{fig:den-ex4}
\end{figure}

\subsection{2D System MFC for reaction-diffusion ($M=2$, $R=1$): image transfer}
\label{ex5}
In this example, we consider a system model \eqref{vmfc} with $M=2$ species and $R=1$ reaction.
We take the initial and terminal densities as the images in Figure~\ref{fig:img}. 
Specifically, the two initial densities $\rho_1^0 = \rho^0(x)$, $\rho_2^0=\rho^3(x)$, 
and the two terminal densities $\rho_1^1 = \rho^1(x)$ and $\rho_2^1=\rho^2(x)$.
We take $V_{1,i}(\rho)=\rho$, 
\[
V_{2,1}(\bmr) = 20\frac{\rho_1+\rho_2}{2}, 
\]
no potential $\bm F(\bmr) = 0$, and no regularization $\beta=0$. 
We use the same discretization as the previous example, and apply 2000 ALG iterations.
Snapshots of the density contour at different times are shown in Figure~\ref{fig:den-ex5}.
Here the reaction with mobility $V_{2,1}$ makes the density evolution different from a classical optimal transport path for each component.

\begin{figure}[H]
\centering
\subfigure[$\rho_1$]
{
\includegraphics[width=0.192\textwidth]{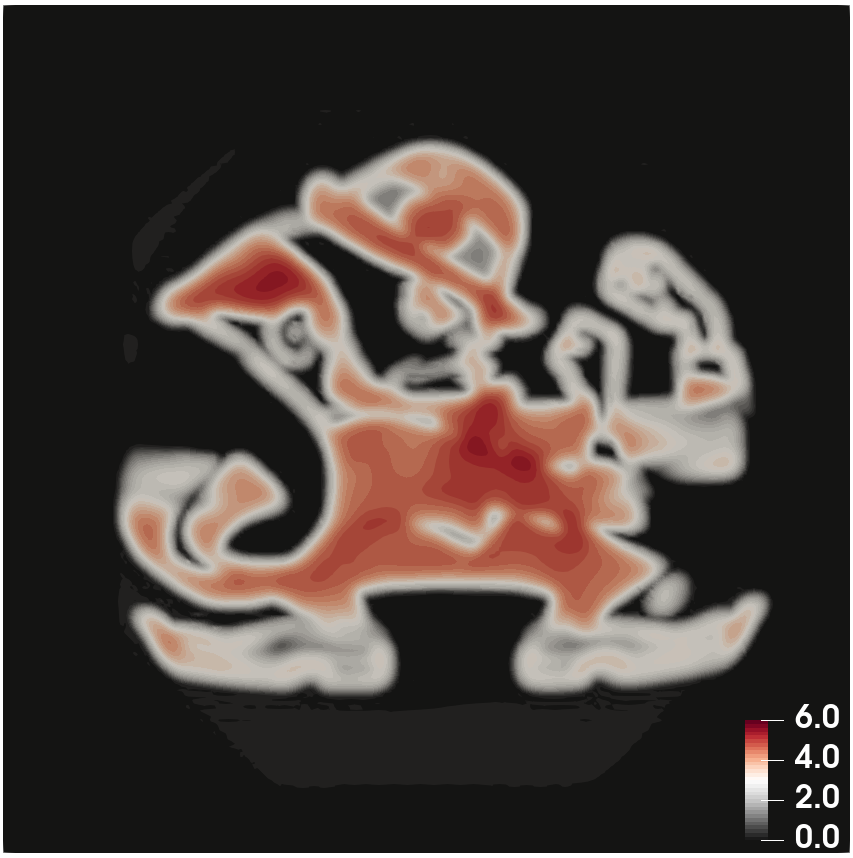}
\includegraphics[width=0.192\textwidth]{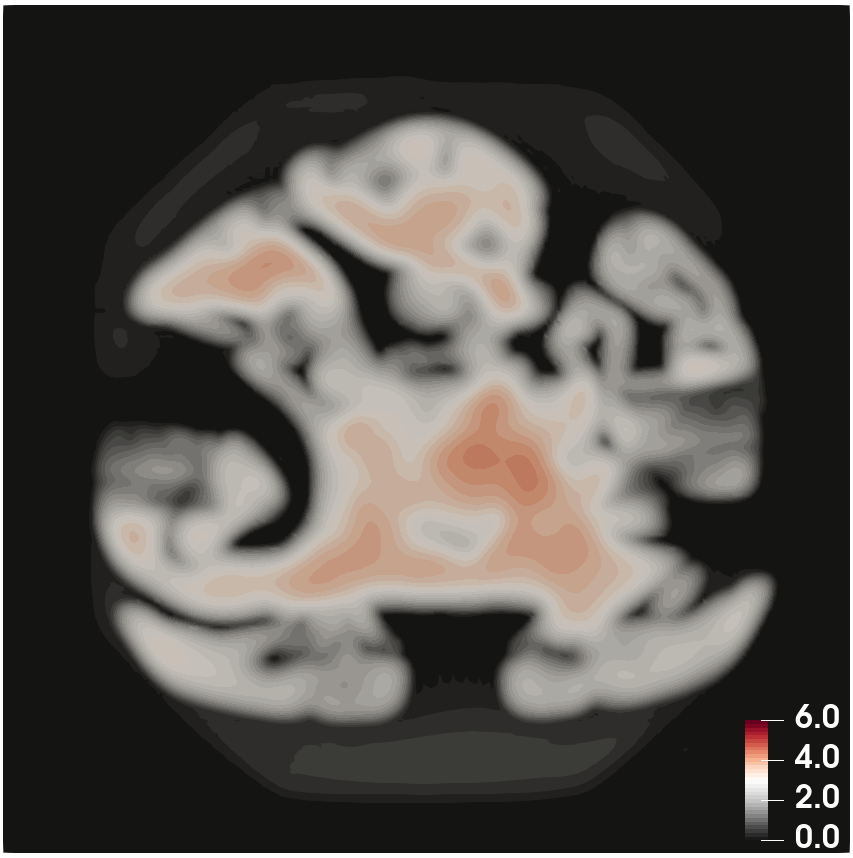}
\includegraphics[width=0.192\textwidth]{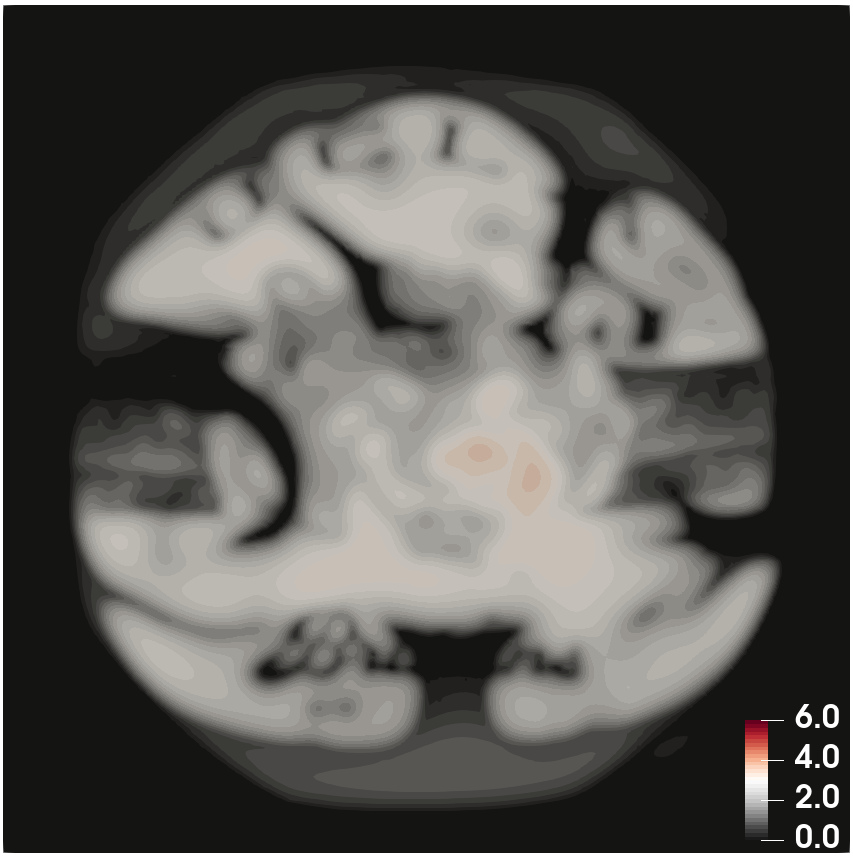}
\includegraphics[width=0.192\textwidth]{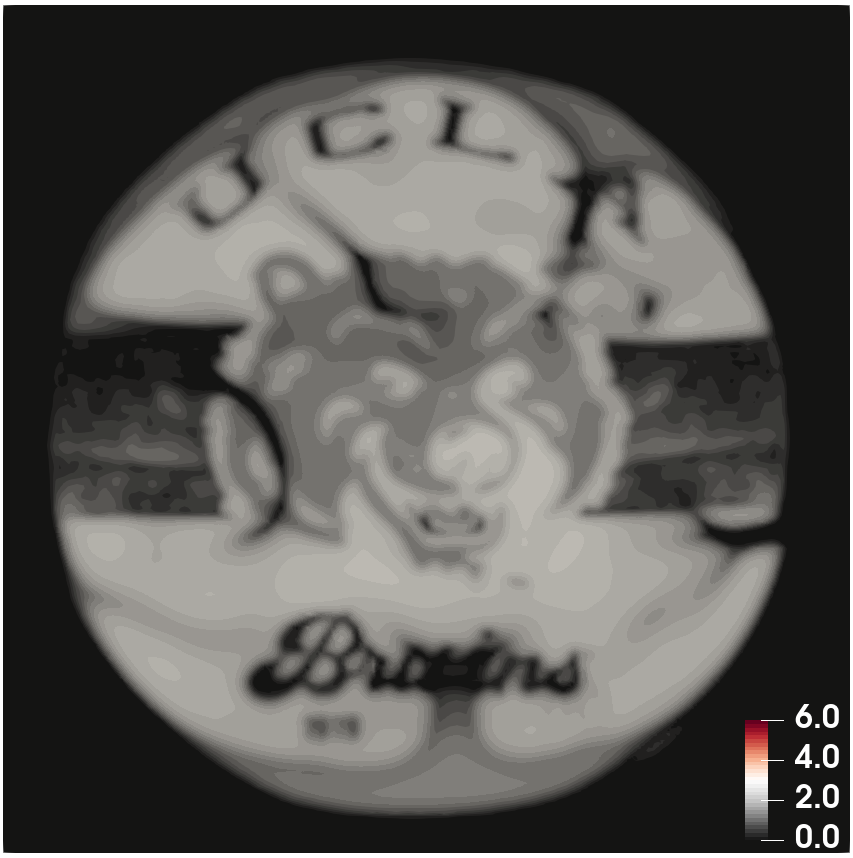}
}
\subfigure[$\rho_2$]
{
\includegraphics[width=0.192\textwidth]{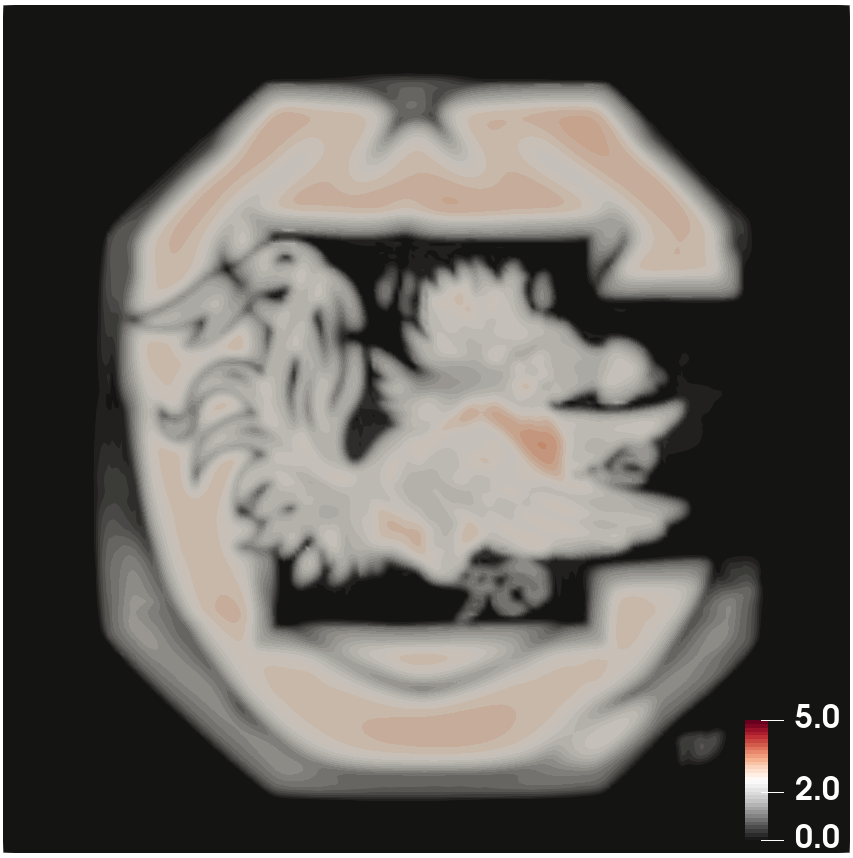}
\includegraphics[width=0.192\textwidth]{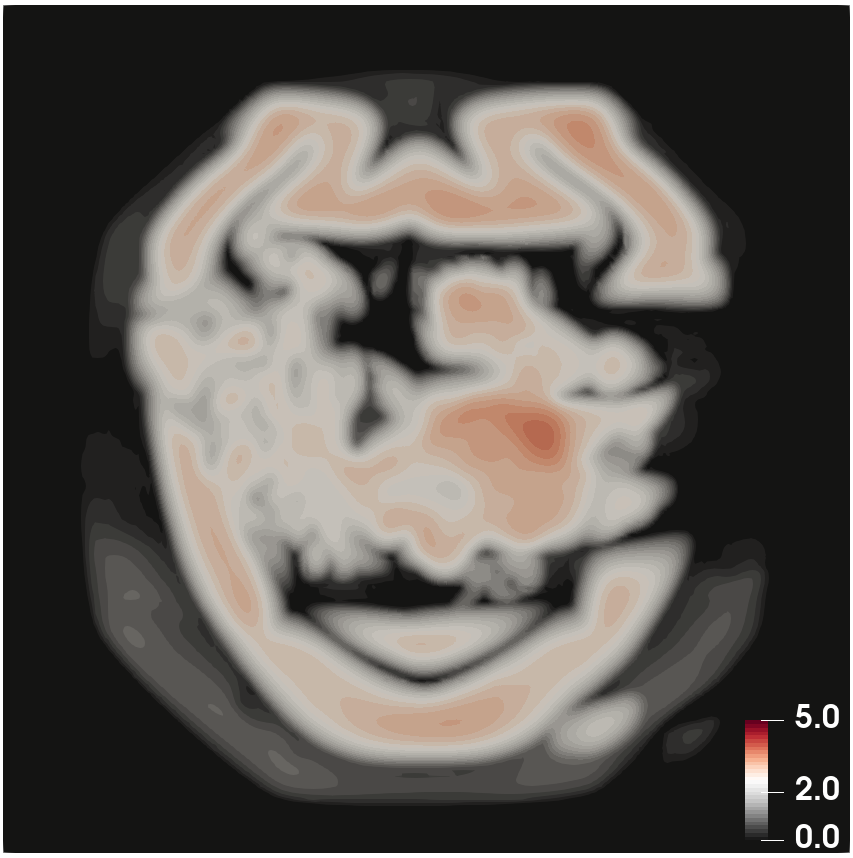}
\includegraphics[width=0.192\textwidth]{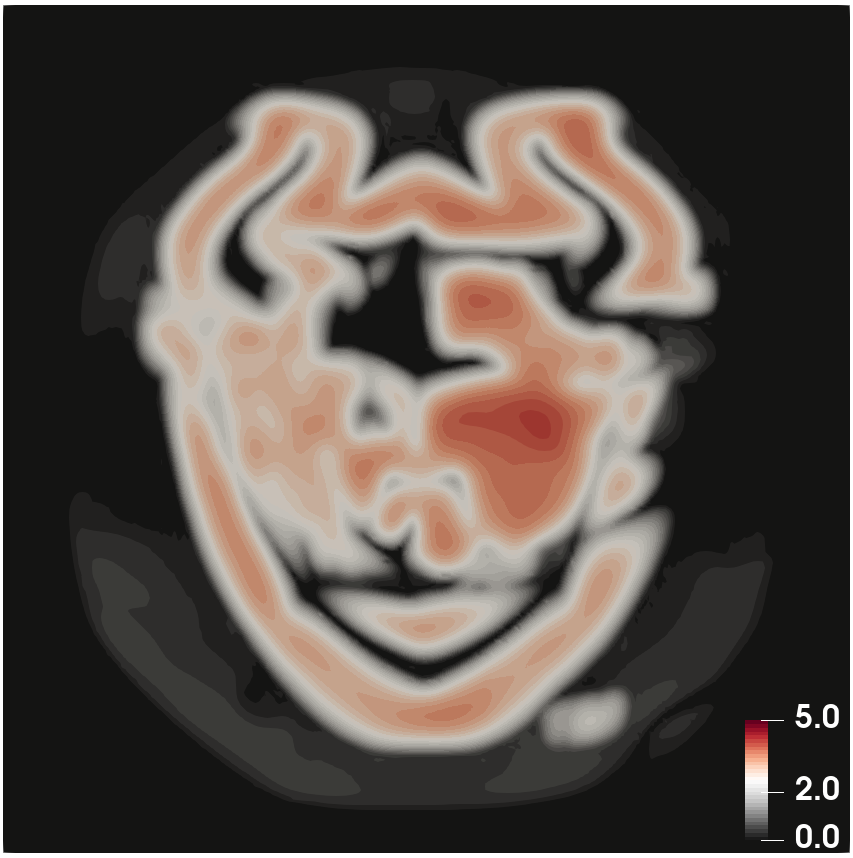}
\includegraphics[width=0.192\textwidth]{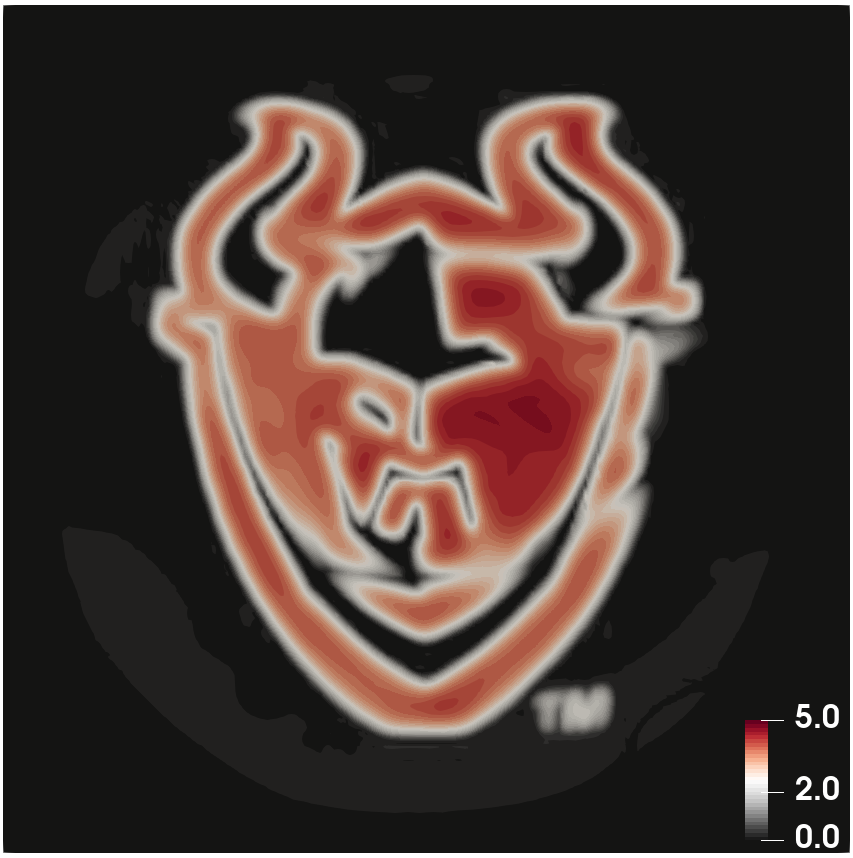}
}
\caption{Example \ref{ex5}. 
Snapshots of $\rho_1$ (top) and $\rho_2$ (bottom) at 
$t=$ 0.2,0.4,0.6,0.8 (left to right).
}
\label{fig:den-ex5}
\end{figure}

\subsection{2D System MFC for reaction-diffusion ($M=12$, $R=12$)}
\label{ex6}
In the last example, we consider a system model \eqref{vmfc} with $M=12$ species and $R=12$ reaction.
The initial densities are the ancient Chinese calligraphy in seal script
for the 12 Chinese zodiac animals, which are downloaded from Richard Sears' website \url{https://hanziyuan.net/}, while the terminal densities are their associated (gray-scale) images, which are generated by 
Baidu's text-to-image AI tool WenXin YiGe \url{https://yige.baidu.com/}. We rescale the images so that the maximal value is 1 and minimial value is 0; see Figure~\ref{fig:den-ex60}--\ref{fig:den-ex61} for the 12 initial ($t=0$) and terminal ($t=1$) density approximations in gray scale. 

Here we take $V_{1,i}(\rho)=\rho$, 
\[
V_{2,p}(\bmr) = 20 \frac{\rho_{p}-\rho_{p+1}}{\log(\rho_{p})-\log(\rho_{p+1})},\quad \forall 1\le p\le 12, 
\]
where the convention $\rho_{13}=\rho_1$ is used. The reaction patterns for this system are cyclic. Again, we set potential  $\bm F(\bmr) = 0$, and regularization $\beta=0$. 
We use the same discretization as the previous example and apply 2000 ALG iterations.
The results at time $t=0.0, 0.2, 0.5, 0.8$, and $1.0$ are shown in Figure~\ref{fig:den-ex60}--\ref{fig:den-ex61}.
The color range is gray-scale from 0 (black) to 1 (white). 
We observe interesting and complex densities' evolutions from these figures. 

For comparison purposes, we also plot the snapshots of densities at time $t=0.5$ for optimal transport without reaction ($V_{2,p}=0$)
in Figure~\ref{fig:den-ex62}. It is observed that the results in the middle row of Figure~\ref{fig:den-ex60}--\ref{fig:den-ex61} for the reaction-diffusion system model are very different from the scalar optimal transport results in Figure~\ref{fig:den-ex62}.
These differences come from the nonlinear reaction mobility functions. 

\begin{figure}[H]
\centering
\subfigure{
\includegraphics[width=0.16\textwidth]{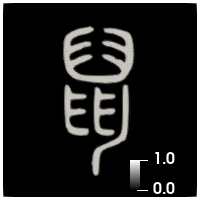}
\includegraphics[width=0.16\textwidth]{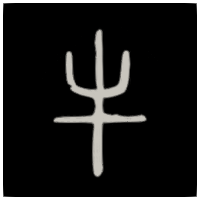}
\includegraphics[width=0.16\textwidth]{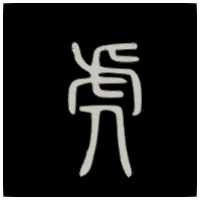}
\includegraphics[width=0.16\textwidth]{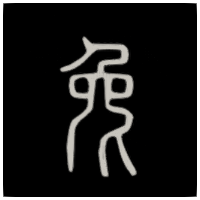}
\includegraphics[width=0.16\textwidth]{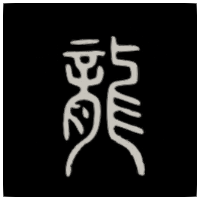}
\includegraphics[width=0.16\textwidth]{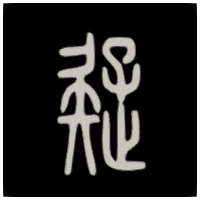}
}
\subfigure
{
\includegraphics[width=0.16\textwidth]{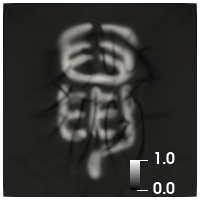}
\includegraphics[width=0.16\textwidth]{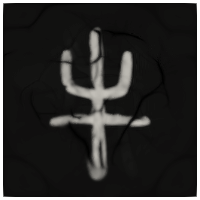}
\includegraphics[width=0.16\textwidth]{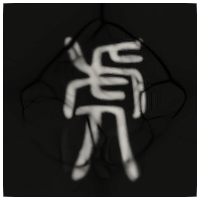}
\includegraphics[width=0.16\textwidth]{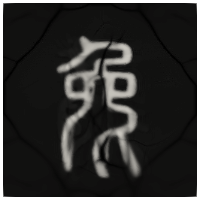}
\includegraphics[width=0.16\textwidth]{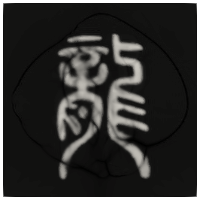}
\includegraphics[width=0.16\textwidth]{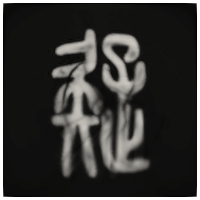}
}
\subfigure
{
\includegraphics[width=0.16\textwidth]{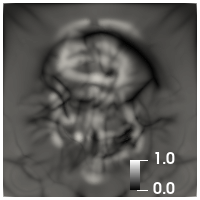}
\includegraphics[width=0.16\textwidth]{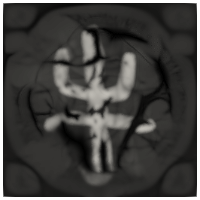}
\includegraphics[width=0.16\textwidth]{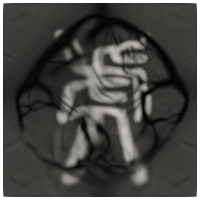}
\includegraphics[width=0.16\textwidth]{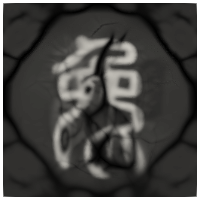}
\includegraphics[width=0.16\textwidth]{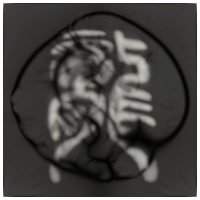}
\includegraphics[width=0.16\textwidth]{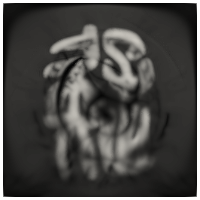}
}
\subfigure
{
\includegraphics[width=0.16\textwidth]{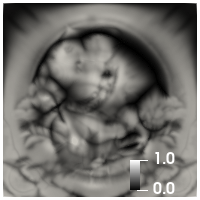}
\includegraphics[width=0.16\textwidth]{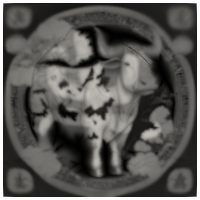}
\includegraphics[width=0.16\textwidth]{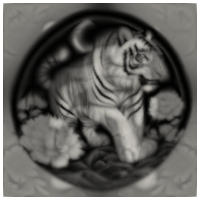}
\includegraphics[width=0.16\textwidth]{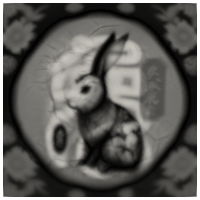}
\includegraphics[width=0.16\textwidth]{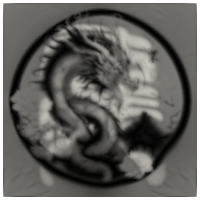}
\includegraphics[width=0.16\textwidth]{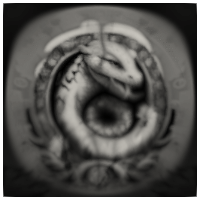}
}
\subfigure
{
\includegraphics[width=0.16\textwidth]{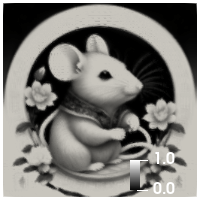}
\includegraphics[width=0.16\textwidth]{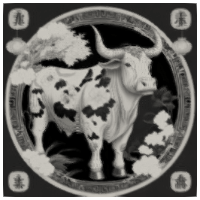}
\includegraphics[width=0.16\textwidth]{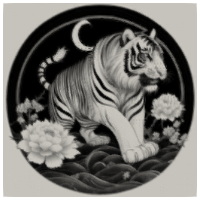}
\includegraphics[width=0.16\textwidth]{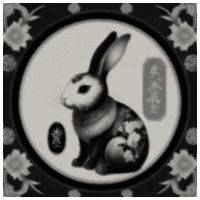}
\includegraphics[width=0.16\textwidth]{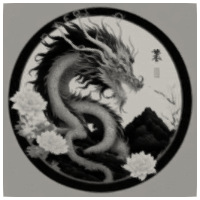}
\includegraphics[width=0.16\textwidth]{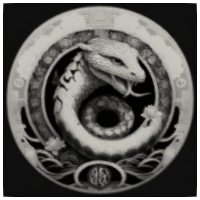}
}
\caption{Example \ref{ex6}. 
Snapshots of first 6 densities $\rho_1$ to $\rho_6$ (left to right) at times $t=0.0, 0.2, 0.5, 0.8, 1.0$ (top to bottom).
}
\label{fig:den-ex60}
\end{figure}

\begin{figure}[H]
\centering
\subfigure{
\includegraphics[width=0.16\textwidth]{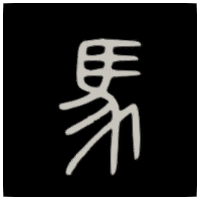}
\includegraphics[width=0.16\textwidth]{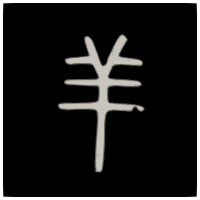}
\includegraphics[width=0.16\textwidth]{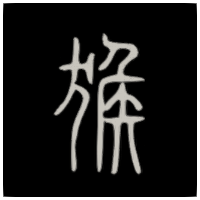}
\includegraphics[width=0.16\textwidth]{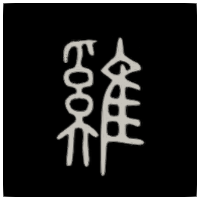}
\includegraphics[width=0.16\textwidth]{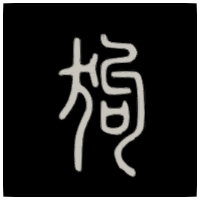}
\includegraphics[width=0.16\textwidth]{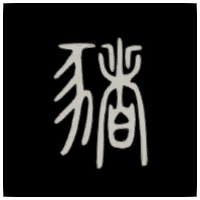}
}
\subfigure
{
\includegraphics[width=0.16\textwidth]{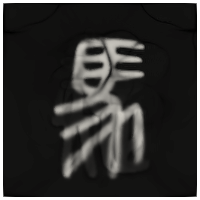}
\includegraphics[width=0.16\textwidth]{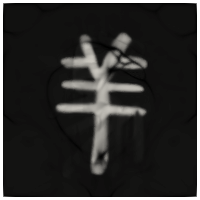}
\includegraphics[width=0.16\textwidth]{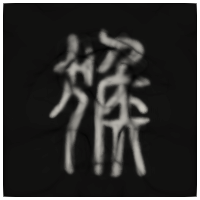}
\includegraphics[width=0.16\textwidth]{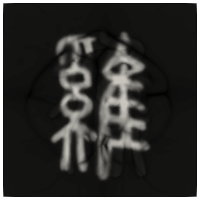}
\includegraphics[width=0.16\textwidth]{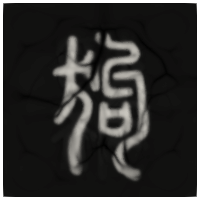}
\includegraphics[width=0.16\textwidth]{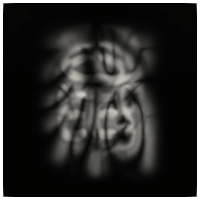}
}
\subfigure
{
\includegraphics[width=0.16\textwidth]{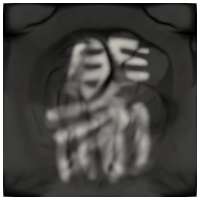}
\includegraphics[width=0.16\textwidth]{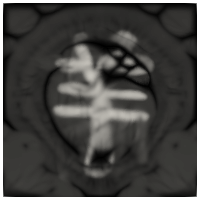}
\includegraphics[width=0.16\textwidth]{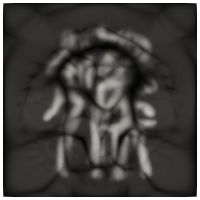}
\includegraphics[width=0.16\textwidth]{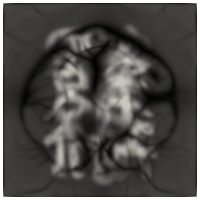}
\includegraphics[width=0.16\textwidth]{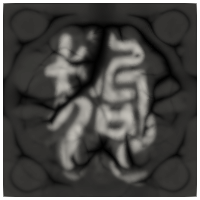}
\includegraphics[width=0.16\textwidth]{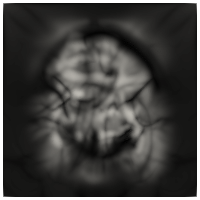}
}
\subfigure
{
\includegraphics[width=0.16\textwidth]{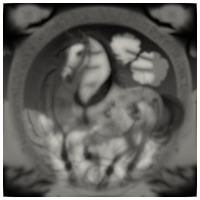}
\includegraphics[width=0.16\textwidth]{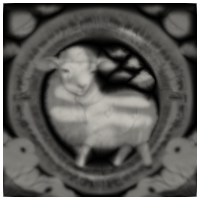}
\includegraphics[width=0.16\textwidth]{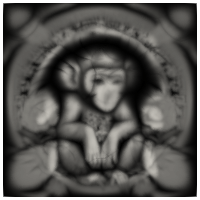}
\includegraphics[width=0.16\textwidth]{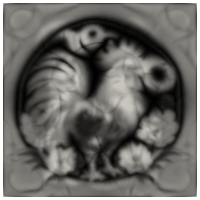}
\includegraphics[width=0.16\textwidth]{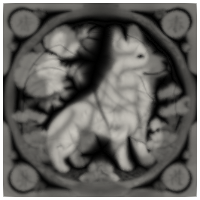}
\includegraphics[width=0.16\textwidth]{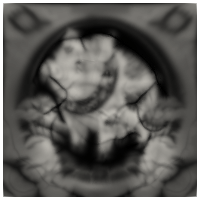}
}
\subfigure
{
\includegraphics[width=0.16\textwidth]{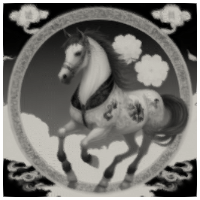}
\includegraphics[width=0.16\textwidth]{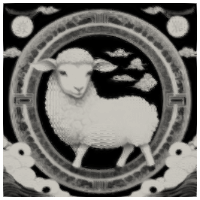}
\includegraphics[width=0.16\textwidth]{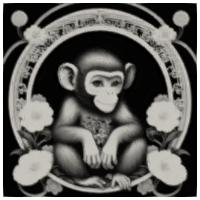}
\includegraphics[width=0.16\textwidth]{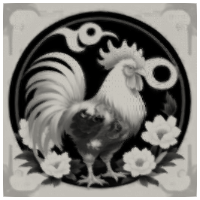}
\includegraphics[width=0.16\textwidth]{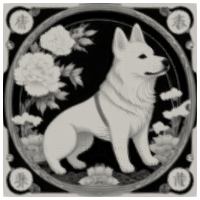}
\includegraphics[width=0.16\textwidth]{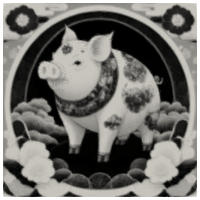}
}
\caption{Example \ref{ex6}. 
Snapshots of last 6 densities $\rho_7$ to $\rho_{12}$ (left to right) at times $t=0.0, 0.2, 0.5, 0.8, 1.0$ (top to bottom).
}
\label{fig:den-ex61}
\end{figure}

\begin{figure}[H]
\centering
\subfigure
{
\includegraphics[width=0.16\textwidth]{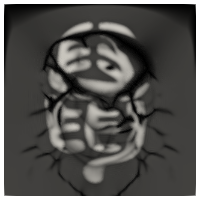}
\includegraphics[width=0.16\textwidth]{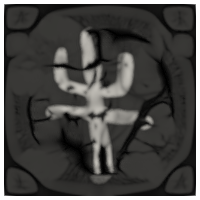}
\includegraphics[width=0.16\textwidth]{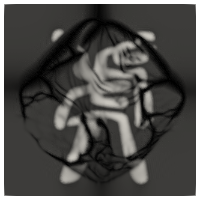}
\includegraphics[width=0.16\textwidth]{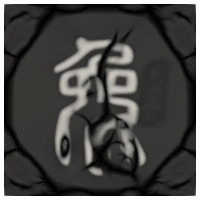}
\includegraphics[width=0.16\textwidth]{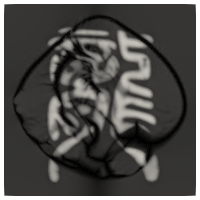}
\includegraphics[width=0.16\textwidth]{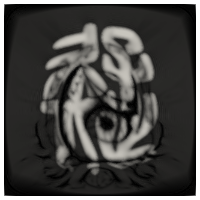}}
\subfigure
{
\includegraphics[width=0.16\textwidth]{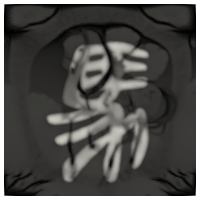}
\includegraphics[width=0.16\textwidth]{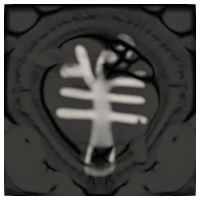}
\includegraphics[width=0.16\textwidth]{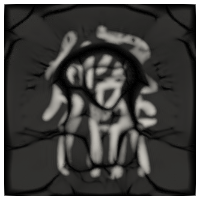}
\includegraphics[width=0.16\textwidth]{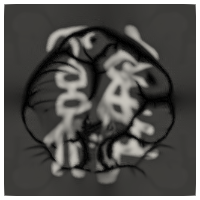}
\includegraphics[width=0.16\textwidth]{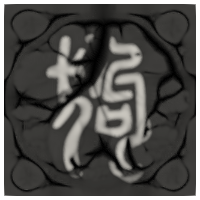}
\includegraphics[width=0.16\textwidth]{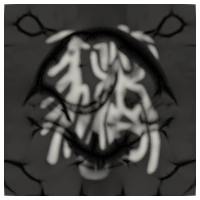}
}
\caption{Example \ref{ex6}. (No reaction $V_{2,p} = 0$.) 
Snapshots of densities at $t=0.5$. From left to right, top to bottom: $\rho_1$ to $\rho_{12}$.
}
\label{fig:den-ex62}
\end{figure}

\section{Discussions}
\label{sec5}
In this paper, we model and compute a class of generalized optimal transport and mean field control problems for reaction-diffusion equations and systems. The control problems are constructed by general choices of transport and reaction mobility functions, such as $V_1$, $V_2$, and $V_3$, derived from entropy dissipation properties. We apply a high-order spatial-time finite element method to discretize the spatial-time domain and use the ALG2 algorithm to compute mean field control problems. Numerical examples in Section \ref{sec4}, including transporting two Gaussian distributions and a system of $12$ images, demonstrate the effectiveness of the proposed mean field control models and computations. 

In future work, we shall study general modeling, computation, and inverse mean field control problems for reaction-diffusion systems. The generalized nonlinear reaction functions often represent complex behaviors between different populations, exhibiting patterns in social dynamical systems. The mean field control problem over transportation and reactions provides new patterns in population behaviors observed from our numerical examples. The analytical study of these new patterns could be a future research direction. We also expect that the mean field control problem of reaction-diffusion systems has vast applications in pandemic control, computer vision, and image processing problems. The other important direction is the parallel and high-order computation of generalized optimal transport and mean field control problems on three-dimensional spatial domains.  

\bibliographystyle{siam}
\bibliography{ams}

\section*{Appendix}
This section gives detailed proofs of Propositions \ref{smfc2}--\ref{vmfcKKT}.

\begin{proof}[Proof of Proposition \ref{smfc2}]
Denote $\mathcal{E}(\rho)=\int_\Omega E(\rho(x))dx$. Thus $\frac{\delta}{\delta\rho}\mathcal{E}(\rho)=E'(\rho)$. 
By the change of variable formula, we obtain 
\begin{equation*}
   \bmm=\tilde\bmm-\beta V_1(\rho)\nabla E'(\rho),\quad {s}=\tilde s-\beta V_2(\rho)E'(\rho). 
\end{equation*}
The constraint set \eqref{smfcA2} satisfies 
\begin{equation*}
\begin{split}
  0=&\partial_t \rho+\nabla\cdot(\tilde\bmm-\beta V_1(\rho)\nabla E'(\rho))-(\tilde s-\beta V_2(\rho)E'(\rho))\\ 
=&\partial_t \rho+\nabla\cdot \bmm-{s}.   
  \end{split}
\end{equation*}
Moreover, the terms in objective functional \eqref{smfcA1} satisfy 
\begin{equation*}
\begin{split}
&\int_0^T\int_\Omega \Big[\frac{\|\tilde \bmm(t,x)\|^2}{ 2V_1(\rho(t,x))}+ \frac{|\tilde s(t,x)|^2}{2V_2(\rho(t,x))}\Big] dxdt\\
=&\int_0^T\int_\Omega \Big[\frac{\| \bmm+\beta V_1(\rho)\nabla E'(\rho)\|}{ 2V_1(\rho)}+ \frac{|s+\beta V_2(\rho)E'(\rho)|^2}{2V_2(\rho)}\Big] dxdt\\
=&\int_0^T\int_\Omega \Big[\frac{\| \bmm\|^2+\beta^2 V_1(\rho)^2\|\nabla E'(\rho)\|^2}{ 2V_1(\rho)}+ \frac{|s|^2+\beta^2 V_2(\rho)^2|E'(\rho)|^2}{2V_2(\rho)}\Big] dxdt\\
&+\beta\int_0^T\int_\Omega \Big[\bmm\cdot \nabla E'(\rho)-s\cdot E'(\rho)\Big]  dx dt\\
=&\int_0^T\int_\Omega \Big[\frac{\| \bmm\|^2}{ 2V_1(\rho)}+ \frac{|s|^2}{2V_2(\rho)}+\frac{\beta^2}{2}\|\nabla E'(\rho)\|^2V_1(\rho)+\frac{\beta^2}{2}|E'(\rho)|^2V_2(\rho)\Big] dxdt\\
&+\beta\int_0^T\int_\Omega E'(\rho)\Big[-\nabla\cdot \bmm+s\Big]  dx dt\\
=&\int_0^T\int_\Omega \Big[\frac{\| \bmm\|^2}{ 2V_1(\rho)}+ \frac{|s|^2}{2V_2(\rho)}+\frac{\beta^2}{2}\|\nabla E'(\rho)\|^2V_1(\rho)+\frac{\beta^2}{2}|E'(\rho)|^2V_2(\rho)\Big] dxdt\\
&+\beta\int_0^T\int_\Omega \Big[E'(\rho)\partial_t \rho\Big]  dx dt\\
=&\int_0^T\int_\Omega \Big[\frac{\| \bmm\|^2}{ 2V_1(\rho)}+ \frac{|s|^2}{2V_2(\rho)}+\frac{\beta^2}{2}\|\nabla E'(\rho)\|^2V_1(\rho)+\frac{\beta^2}{2}|E'(\rho)|^2V_2(\rho)\Big] dxdt\\
&+\beta\int_\Omega \Big[E(\rho(T,\cdot))-E(\rho^0)\Big]dx, 
\end{split}
\end{equation*}
where the last equality follows the fact that $\int_0^TE'(\rho)\partial_t \rho dt=\int_0^T\partial_t E(\rho) dt=E(\rho(T,\cdot))-E(\rho^0)$. 
Thus we derive the variational problem \eqref{smfc2A} using the definition of $V_3$ in \eqref{v3-form}. 
\end{proof}

\begin{proof}[Proof of Proposition \ref{smfcKKT}]
Denote the Lagrange multiplier of problem \eqref{smfc2A} as $\phi\colon [0, T]\times\Omega\rightarrow\mathbb{R}$. Consider the following saddle point problem:
\begin{equation*}
\begin{split}
\inf_{\bmm, s, \rho, \rho_T}\sup_\phi \quad \mathcal{L}(\bmm, s, \rho, \rho_T, \phi),
\end{split}
\end{equation*}
where
\begin{equation*}
\begin{split}
\mathcal{L}(\bmm, s,\rho, \phi)=& \int_0^T \int_\Omega \Big[ \frac{\|\bmm\|^2}{2V_1(\rho)}+\frac{|s|^2}{2V_2(\rho)}+\phi\Big(\partial_t \rho+ \nabla\cdot \bmm-s\Big)\Big] dxdt\\
&+\int_{0}^T\Big[\frac{\beta^2}{2}\mathcal{I}(\rho)-\mathcal{F}(\rho)\Big] dt
+\mathcal{G}(\rho_T)+\beta\mathcal{E}(\rho_T).
\end{split}
\end{equation*}
Assume $\rho>0$. By solving the saddle point problem of $\mathcal{L}$, we obtain
\begin{equation*}
\left\{\begin{split}
&\frac{\delta}{\delta \bmm}\mathcal{L}=0,\\
& \frac{\delta}{\delta s}\mathcal{L}=0,\\
 &\frac{\delta}{\delta \rho}\mathcal{L}=0,\\
 &\frac{\delta}{\delta \phi}\mathcal{L}=0,\\
  &\frac{\delta}{\delta \rho_T}\mathcal{L}=0,\\
\end{split}\right.\quad\Rightarrow\quad\left\{\begin{split}
&\frac{\bmm}{V_1}=\nabla \phi,\\
& \frac{s}{V_2}=\phi,\\
 &-\frac{1}{2}\frac{\|\bmm\|^2}{V_1^2}V_1'-\frac{1}{2}\frac{|s|^2}{V_2^2}V_2'+\frac{\delta}{\delta \rho}\Big[\frac{\beta^2}{2}\mathcal{I}(\rho)-\mathcal{F}(\rho)\Big]-\partial_t\phi=0,\\
 &\partial_t\rho+\nabla\cdot \bmm-s=0,\\
 & \phi_T+\frac{\delta}{\delta\rho_T}\Big(\mathcal{G}(\rho_T)+\beta\mathcal{E}(\rho_T)\Big)=0. 
\end{split}\right.
\end{equation*}
This finishes the proof. 
\end{proof}

\begin{proof}[Proof of Proposition \ref{vmfc2}]
The proof is similar to the scalar case. Denote $\mathcal{E}_i(\rho_i)=\int_\Omega E_i(\rho_i(x))dx$. Thus $\frac{\delta}{\delta\rho_i}\mathcal{E}_i(\rho_i)=E_i'(\rho_i)$. 
By the change of variable formula, we obtain 
\begin{equation*}
   \bmm_i=\tilde\bmm_i-\beta V_{1,i}(\rho_i)\nabla E'_i(\rho_i),\quad {s}_p=\tilde s_p-\beta V_{2,p}(\bmr)\sum_{j=1}^M\gamma_{j,p}E'_j(\rho_j). 
\end{equation*}
The constraint set \eqref{vmfc2A2} satisfies 
\begin{equation*}
\begin{split}
  0=&\partial_t \rho_i+\nabla\cdot(\tilde\bmm_i-\beta V_{1,i}(\rho_i)\nabla E'_i(\rho_i))-(\sum_{p=1}^R\gamma_{i,p}\tilde s_p-\beta\sum_{p=1}^R\gamma_{i,p}V_{2,p}(\bmr)\sum_{j=1}^M\gamma_{j,p}E'_j(\rho_j))\\ 
=&\partial_t \rho_i+\nabla\cdot \bmm_i-\sum_{p=1}^R\gamma_{i,p}{s}_p.   
  \end{split}
\end{equation*}
Moreover, the terms in objective functional \eqref{vmfc2A1} satisfies 
\begin{equation*}
\begin{split}
&\int_0^T\int_\Omega \Big[\sum_{i=1}^M\frac{\|\tilde \bmm_i(t,x)\|^2}{ 2V_{1,i}(\rho_i(t,x))}+ \sum_{p=1}^R\frac{|\tilde s_p(t,x)|^2}{2V_{2,p}(\bmr(t,x))}\Big] dxdt\\
=&\int_0^T\int_\Omega \Big[\sum_{i=1}^M\frac{\| \bmm_i+\beta V_{1,i}(\rho_i)\nabla E'_i(\rho_i)\|}{ 2V_{1,i}(\rho_i)}+ \sum_{p=1}^R\frac{|s_p+\beta \sum_{j=1}^M \gamma_{j,p}V_{2,p}(\bmr)E'_j(\rho_j)|^2}{2V_{2,p}(\bmr)}\Big] dxdt\\
=&\int_0^T\int_\Omega \Big[\sum_{i=1}^M\frac{\| \bmm_i\|^2+\beta^2 V_{1,i}(\rho_i)^2\|\nabla E'_i(\rho_i)\|^2}{ 2V_{1,i}(\rho_i)}+ \sum_{p=1}^R\frac{|s_p|^2+\beta^2 |\sum_{j=1}^M\gamma_{j,p}V_{2,p}(\rho)E'_j(\rho_j)|^2}{2V_{2,p}(\bmr)}\Big] dxdt\\
&+\beta\int_0^T\int_\Omega \Big[\sum_{i=1}^M\bmm_i\cdot \nabla E'_i(\rho_i)-\sum_{p=1}^R s_p\cdot\sum_{j=1}^M\gamma_{j,p}V_{2,p}(\bmr)E'_j(\rho_j)\Big]  dx dt. 
\end{split}
\end{equation*}
We only need to show that 
\begin{equation*}
    \int_0^T\int_\Omega \Big[\sum_{i=1}^M\bmm_i\cdot \nabla E'_i(\rho_i)-\sum_{p=1}^R s_p\cdot\sum_{j=1}^M\gamma_{j,p}E'_j(\rho_j)\Big]  dx dt=\int_\Omega \sum_{i=1}^M\Big[E_i(\rho_i(T,\cdot))-E_i(\rho_i^0)\Big]dx. 
\end{equation*}
This is true from the following fact:
\begin{equation*}
\begin{split}
    &\int_0^T\int_\Omega \Big[\sum_{i=1}^M\bmm_i\cdot \nabla E'_i(\rho_i)-\sum_{p=1}^R s_p\cdot\sum_{j=1}^M\gamma_{j,p}E'_j(\rho_j)\Big]  dx dt\\
    =&\int_0^T \int_\Omega \sum_{i=1}^M  E'_i(\rho_i)\cdot \Big[-\nabla\cdot \bmm_i+\sum_{p=1}^R\gamma_{i,p}s_p\Big] dx dt\\ 
    =&\int_\Omega \int_0^T \sum_{i=1}^M E'_i(\rho_i)\cdot\partial_t\rho_i dt dx\\
=&
\int_\Omega \sum_{i=1}^M\Big[E_i(\rho_i(T,\cdot))-E_i(\rho_i^0)\Big]dx,
\end{split}
\end{equation*}
where the last equality uses the integration by parts in the time variable. This finishes the proof. 
\end{proof}

\begin{proof}[Proof of Proposition \ref{vmfcKKT}]
The proof is similar to the scalar case. We derive the minimizer system for variational problem \eqref{vmfc2A}. Denote $\phi_i\colon [0, T]\times \Omega\rightarrow \mathbb{R}$ as the Lagrange multiplier of constraint \eqref{vmfc2A1}, for $i=1,2,\cdots, M$. Write $\phi=(\phi_i)_{i=1}^M$. Consider the following saddle point problem 
\begin{equation*}
\inf_{\mm,s,\rho, \rho_T}\sup_{\phi}\quad \mathcal{L}_1(\mm, s, \bmr, \bmr_T, \phi),
\end{equation*}
where 
\begin{equation*}
\begin{split}
    &\mathcal{L}_{1}(\mm, s, \bmr, \bmr_T, \phi)\\
    =&\int_0^T\int_\Omega \Big[\frac{1}{2}\sum_{i=1}^M
    \frac{\|\bmm_i\|^2}{V_{1,i}(\rho_i)}
    +\frac{1}{2}\sum_{p=1}^R\frac{|\s_{p}|^2}{V_{2,p}(\bmr)}+\frac{\beta^2}{2}\bm{\mathcal{I}}(\bmr)\Big]dx-\mathcal{F}(\bmr(t,\cdot))dt\\
    &+\mathcal{G}(\bmr_T)+\beta \Big(\mathcal{E}(\bmr_T)-\mathcal{E}(\bmr_0)\Big)\\
&+\sum_{i=1}^M\int_0^T\int_\Omega \phi_i\cdot\Big\{\partial_t \rho_i + \nabla\cdot \m_i-\sum_{p=1}^R\gamma_{i,p}\s_{p}\Big\} dxdt. 
    \end{split}
\end{equation*}
Assume $\rho_i>0$, $i=1,2,\cdots, M$. By solving the saddle point problem of $\mathcal{L}_1$, we obtain
\begin{equation*}
\left\{\begin{split}
&\frac{\delta}{\delta \bmm_{i}}\mathcal{L}_1=0,\\
& \frac{\delta}{\delta \s_{p}}\mathcal{L}_1=0,\\
 &\frac{\delta}{\delta \rho_i}\mathcal{L}_1=0,\\
 &\frac{\delta}{\delta \phi_i}\mathcal{L}_1=0,\\
  &\frac{\delta}{\delta \bmr_T}\mathcal{L}_1=0,
\end{split}\right.\quad\Rightarrow\quad\left\{\begin{split}
&{\bmm_i}=V_{1,i}(\rho_i)\nabla \phi_i,\\
& \s_{p}=V_{2,p}(\bmr)\sum_{j=1}^M \gamma_{j,p}\phi_p,\\
&-\frac{1}{2}\sum_{i=1}^M\frac{\|\bmm_{i}\|^2}{V_{1,i}(\rho_i)^2}V_{1,i}(\rho_i)'\\
&-\frac{1}{2}\sum_{p=1}^R\frac{|s_{p}|^2}{V_{2,p}(\bmr)^2}\frac{\partial}{\partial \rho_i}V_{2,p}(\bmr)\\
&+\frac{\partial}{\partial \rho_i}[\frac{\beta^2}{2}\bm{\mathcal{I}}(\bmr)-\mathcal{F}(\bmr)]-\partial_t \phi_i=0,\\
& \partial_t \rho_i + \nabla\cdot \bmm_i-\sum_{p=1}^R\gamma_{i,p}s_{p}=0,\\
&\phi_{T}+\frac{\delta}{\delta \bmr_{T}}\mathcal{G}(\bmr_T)+\beta \frac{\delta}{\delta\bmr_{T}}\mathcal{E}(\bmr_T)=0. 
\end{split}\right.
\end{equation*}
This finishes the proof. 
\end{proof}

\end{document}